\numberwithin{equation}{section}
\theoremstyle{plain}
  \theoremstyle{plain}
  \theoremstyle{definition}
  \theoremstyle{plain}
  \newenvironment{svmultproof}{\begin{proof}}{\end{proof}}
  \providecommand{\corollaryname}{Corollary}
  \providecommand{\definitionname}{Definition}
  \providecommand{\propositionname}{Proposition}
  \providecommand{\theoremname}{Theorem}
\begin{document}

\title{Supersymmetry and cohomology of graph complexes.}
\thanks{PREPRINT HAL-00429963 (2009)} 

\author{Serguei Barannikov}



\address{IMJ-PRG} 

\email{serguei.barannikov@imj-prg.fr}
\maketitle
\begin{abstract}
I describe a combinatorial construction of the cohomology classes
in compactified moduli spaces of curves $\widehat{Z}_{I}\in H^{*}(\bar{\mathcal{M}}_{g,n})$
starting from the following data: an\emph{ odd} derivation $I$, whose
square is \emph{non-zero} in general, $I^{2}\neq0$, acting on a $\mathbb{Z}/2\mathbb{Z}$-graded
associative algebra with \emph{odd} scalar product. The constructed
cocycles were first described in the theorem 2 in \cite{B3}. By the
theorems 2 and 3 from loc.cit. the family of the cohomology classes
obtained in the case of the algebra $Q(N)$ and the derivation $I=\left[\Lambda,\cdot\right]$
coincided with the generating function of products of $\psi$ classes.
This was the first nontrivial computation for \emph{categorical }Gromov-Witten
invariants of \emph{higher} genus.  As a byproduct I obtain
a new combinatorial formula for products of $\psi$-classes,
$\psi_{i}=c_{1}(T_{p_{i}}^{*})$, in the cohomology $H^{*}(\bar{\mathcal{M}}_{g,n})$
\end{abstract}

\section*{Introduction}

We have introduced in \cite{B3} a new type of graded matrix integrals
which can be seen as a generalization of the periods of projective
manifolds. One of their main properties was that their asymptotic
expansions defined cohomology classes of compactified moduli spaces
of curves. This paper is about this combinatorial construction of
the cohomology classes of compactified moduli spaces of curves starting
from the following data: $\mathbb{Z}/2\mathbb{Z}$-graded associative
algebra $A$ equipped with \emph{odd} scalar product $g$ and an\emph{
odd} derivation $I$, whose square is \emph{non-zero} in general,
$I^{2}\neq0$, $\widehat{Z}_{I}\in H^{*}(\bar{\mathcal{M}}_{g,n})$. 

An example of such data is the ``odd matrix algebra'' $Q(N)$ equipped
with the odd (queer) trace and the derivation $I=\left[\Lambda,\cdot\right]$,
see section \ref{sec:odd-matrix-algebra}. 

The constructed cocycles were first described in the theorem 2 in
\cite{B3}. By the theorems 2 and 3 from loc.cit. the family of the
cohomology classes obtained in the case of the algebra $Q(N)$ and
the derivation $I=\left[\Lambda,\cdot\right]$ coincides with the
generating function of products of $\psi$ classes. This was the first
nontrivial computation for \emph{categorical }Gromov-Witten invariants
of higher genus. The result matched with the mirror symmetry prediction,
namely with the classical (non-categorical) Gromov-Witten descendent
invariants of a point for all genus. 

\subsection{A new formula for products of $\psi$-classes in $H^{*}(\bar{\mathcal{M}}_{g,n})$.}

As a byproduct of the calculation in the case of $Q(N)$ I get the
following new formula for products of $\psi$-classes, $\psi_{i}=c_{1}(T_{p_{i}}^{*})$,
in the cohomology $H^{*}(\bar{\mathcal{M}}_{g,n})$ 
\begin{multline}
\sum_{\sum d_{i}=d}\psi_{1}^{d_{1}}\ldots\psi_{n}^{d_{n}}\prod_{i=1}^{n}\frac{(2d_{i}-1)!!}{\lambda_{i}^{2d_{i}+1}}=\\
=\left[\sum_{\left[\widehat{G}\right]\in\Gamma_{g,n}^{dec,odd}}(\widehat{G},or(\widehat{G}))\frac{2^{-\chi(\widehat{G})}}{\left|\textrm{Aut}(\widehat{G})\right|}\prod_{e\in\textrm{Edge}(\hat{G})}\frac{1}{\lambda_{i(e)}+\lambda_{j(e)}}\right]\label{eq:psiprod}
\end{multline}
where the sum on the right is over \emph{stable ribbon} graphs of
genus $g$ with $n$ numbered punctures, with $2d+n$ edges, and such
that its vertices have cyclically ordered subsets of arbitrary \emph{odd}
cardinality, see theorem \ref{psi-identity}. In the simplest case,
corresponding to the top degree, the cohomology $H^{6g-6+2n}(\bar{\mathcal{M}}_{g,n})$
is 1-dimensional, the summation is over 3-valent ribbon graphs and
this formula then reproduces the main identity from \cite{K}.

\subsection{Overview of the construction.}

The construction associates the following weight $\widehat{Z}_{\widetilde{I}}^{\widehat{G},or(\widehat{G})}$
with any oriented \emph{stable ribbon} graph defined as contraction
of tensors:
\[
\widehat{Z}_{\widetilde{I}}^{\widehat{G},or(\widehat{G})}=\left\langle \bigotimes_{e\in Edge(\widehat{G})}(g_{\widetilde{I}}^{-1})_{e},\bigotimes_{v\in Vert(\widehat{G})}\alpha_{\sigma_{v},\gamma_{v}}\right\rangle 
\]
 The tensors $\alpha_{\sigma_{v},\gamma_{v}}$ associated with the
vertices of the stable ribbon graph are the compositions of the multiplication
tensor with insertions of tensors dual to the scalar product which
are defined using the cyclic orders on the subsets of flags $\sigma_{v}=(\rho_{1}\ldots\rho_{r})\ldots(\tau_{1}\ldots\tau_{t})$
of the vertex $v$ 
\begin{multline*}
\alpha_{\sigma_{v},\gamma_{v}}(\pi a_{\rho_{1}}\otimes\ldots\otimes\pi a_{\tau_{t}})=\\
g\left(\sum_{\mu}(-1)^{\epsilon}e^{\mu_{1}}a_{\rho_{1}}\ldots a_{\rho_{r}}e_{\mu_{1}}e^{\mu_{2}}\ldots e_{\mu_{b_{v}-1}}a_{\tau_{1}}\ldots a_{\tau_{t}},(\sum_{\xi,\zeta}(-1)^{\overline{e^{\xi}}\overline{e^{\zeta}}}e^{\xi}e^{\zeta}e_{\xi}e_{\zeta})^{\gamma_{v}}\right)
\end{multline*}
see definition \eqref{def:alphamulti}. The tensors associated with
the edges are 
\[
g^{-1}(\widetilde{I}^{\ast}\varphi,\psi)
\]
where $g^{-1}$ is the dual scalar product and $\widetilde{I}^{\ast}$
is dual to $\widetilde{I}$ which is the homotopy inverse to $I$.
The construction of such inverse is explained in propositions \ref{prop:IItilde1},
\ref{prop:nondegenquadr}. The theorem \ref{thm:boundaryZhatzero}
affirms that the sum over all equivalence classes of oriented stable
ribbon graphs 
\[
\widehat{Z}_{I}=\sum\widehat{Z}_{\widetilde{I}}^{\widehat{G},or(\widehat{G})}(\widehat{G},or(\widehat{G}))
\]
 is a cocycle in the stable ribbon graph complex. 

The construction is a corollary to the theorem 2 from \cite{B1} which
identifies the complex computing the cohomology of the compactified
moduli spaces of curves with the complex underlying the Feynman transform
of the modular operad of associative algebras with scalar product.
A $\mathbb{Z}/2\mathbb{Z}$-graded associative algebra which satisfy
$\textrm{Tr}\left(l_{a}\right)=0$ (see \eqref{trMa}) is an algebra
over the modular operad $\mathbb{S}[t]$ from loc.cit. and the induced
morphism $\mathcal{F}(End(A),I)\rightarrow\mathcal{F}(\mathbb{S}[t])$
defines the cohomology class whose construction is detailed in this
paper. This in particular explains the tensors associated with the
vertices and gives an overall check of the construction independently
of the concrete calculations from the sections 7-9. 

\subsection{A counterexample to the theorem 1.3 from \cite{K2}.}

An important remark is that the truncated cochain, where the sum is
taken over the usual ribbon graphs only, is not closed. The boundary
of such truncated cochain gets certain nontrivial contribution from
graphs with loops and it is nonzero in general. The contribution is
non-zero even if one restricts the initial data by imposing extra
condition of $I^{2}=0$. This leads to a counterexample to the theorem
1.3 from \cite{K2}, see section \ref{sec:boundaryYes}. It turned
out that in the case of the simplest $\mathbb{Z}/2\mathbb{Z}$-graded
noncommutative algebra $\left\langle 1,\xi\right\rangle /\xi^{2}=1$,$\bar{\xi}=1$,
the cochain from loc.cit. does not vanish on the boundary of the graph
$G$, which represents a surface of genus 1 with 2 punctures, see Fig.\ref{fig1}
and proposition \ref{counterex}.

\subsection{Cohomological field theories.}

Our construction of the cohomology classes in $H^{*}(\bar{\mathcal{M}}_{g,n})$
defines a cohomological field theory. We extended this construction
of cohomology classes in $H^{*}(\bar{\mathcal{M}}_{g,n})$ to the
case of $A_{\infty}-$algebras/categories with scalar product, and
it defined also a cohomological field theory. This extension and the
cohomological field theory, that it defined, were studied in \cite{B4}.
This cohomological field theory for the algebra $A=End(C)$ where $C$ is a generator of $D^bCoh(X)$, $X$ is the mirror dual Calabi-Yau manifold,
conjecturally coincides with the cohomological field theory defined by Gromov-Witten invariants of all genus.

\subsection*{Plan of the paper}

In section 1 the homotopy invertability of the operator $I$ is studied.
In sections 2-5 the truncated cochain, which is a sum over usual ribbon
graphs only, is studied and it is shown that the boundary of the truncated
cochain is nonzero. Our main cocycle $\widehat{Z}_{I}$ is defined
in the sections 6-7. In sections 8-9 it is shown that the boundary
of $\widehat{Z}_{I}$ is zero. In sections 10 it is shown that modifying
$\widetilde{I}$ by the term $[\widetilde{I},[I,X]]$ produces a boundary,
and the independence of the cohomology class of $\widehat{Z}_{I}$
on the choice of $\widetilde{I}$ is studied in section 11. The section
12 concerns with the calculations in the case of $A=Q(N)$ and the
proof of the formula \eqref{eq:psiprod}

\subsection*{Acknowledgements.}

The results of this paper were presented starting from 2006 at conferences
in Cambridge, Berkeley, Grenoble, Bonn, Miami, Vienna, Brno, Tokyo, Moscow,
Boston. I am thankful to organizers of these conferences for their
hospitality, for the opportunity to present these results to large
audience and to the participants for interesting questions and comments.

\subsection*{Notations. }

For an element $a$ from $\mathbb{Z}/2\mathbb{Z}$-graded vector space
$A$ let us denote by$\pi a$ $\in\Pi A$ the same element considered
with inversed parity. $char(k)=0$. The parity of $a$ is denoted
by $\overline{a}$. Throughout the paper $A$ denotes a finite-dimensional\footnote{we work with finite-dimensional algebras, however it is straightforward
to adapt our construction to infinite dimensional situations, for
example of dg-associative algebra $A$ with $\dim_{k}H^{*}(A)<\infty$,
provided that the propagator $g^{-1}$ with the corresponding properties
is defined.} $\mathbb{Z}/2\mathbb{Z}$-graded associative algebra over $k$, $\textrm{char}(k)=0$,
with \emph{odd} invariant scalar product
\begin{eqnarray*}
g(\cdot,\cdot) & : & S^{2}A\rightarrow\Pi k\\
g(ab,c) & = & g(a,bc)\\
g(a,b) & = & (-1)^{\overline{a}\overline{b}}g(b,a)
\end{eqnarray*}
If the algebra has an identity then such a scalar product comes from
an \emph{odd} trace
\[
l:A/[A,A]\rightarrow\Pi k,g(a,b)=l(ab)
\]
In general the linear functional $l$ is well defined via $l(ab)=g(a,b)$
on the image of the multiplication map $A^{\otimes2}\rightarrow A$.
Denote by $I$ an odd anti-self-adjoint operator $I$ , which is a
derivation with respect to the algebra structure. 

\section{Propagators in the presence of odd scalar product.}

In this section the algebra structure does not play any role, so $g$
is an odd non-degenerate symmetric pairing $g:V^{\otimes2}\rightarrow\Pi k$
on some $\mathbb{Z}/2\mathbb{Z}$-graded vector space $V$, and $I$
is any odd anti-self-adjoint operator on $V$. 
\begin{theorem}
\label{thm:odd-anti-selfadj}Any odd anti-self-adjoint operator $I$
over algebraically closed field $k$, $char\left(k\right)\neq2$,
\[
g(Ix,y)+(-1)^{\overline{x}}g(x,Iy)=0
\]
can be reduced by linear transformation preserving the odd scalar
product $g$ to a sum consisting of the following blocks:

$1_{k}$- $\mathbb{Z}/2\mathbb{Z}-$graded subspace spanned by $2k$
elements $v,Iv,\ldots,I^{2k-1}v$ with $\overline{v}=k\textrm{ mod }2$,
$I^{2k}v=0$, 
\[
g(I^{m}v,I^{m'}v)=\delta_{m+m',2k-1}(-1)^{m\overline{v}+\frac{1}{2}m(m+1)},
\]

$2_{k}$- $\mathbb{Z}/2\mathbb{Z}-$graded subspace spanned by $2k$
elements $v,u,Iv,Iu\ldots,$ $I^{k-1}v,I^{k-1}u$ with $\overline{v}+\bar{u}=k\textrm{ mod }2$,
$I^{k}u=I^{k}v=0$, $g(I^{m}v,I^{m'}v)=0$, $g(I^{m}u,I^{m'}u)=0$,
\[
g(I^{m}v,I^{m'}u)=\delta_{m+m',k-1}(-1)^{m\overline{v}+\frac{1}{2}m(m+1)},
\]

$3$ - $\mathbb{Z}/2\mathbb{Z}-$graded subspace on which $I$ is
invertible.
\end{theorem}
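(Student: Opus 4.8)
The plan is to reduce $I$ to a normal form by an inductive dévissage, peeling off one indecomposable block at a time while preserving $g$. First I would observe that it suffices to understand the structure of $I$ as a nilpotent operator, since part $3$ (the invertible part) splits off immediately: if $N=\ker I^{\infty}$ is the generalized nullspace and $W$ the sum of the other generalized eigenspaces, then $W$ is $I$-stable, and I would check that $W$ is $g$-nondegenerate — indeed for $w\in W$ and $x\in N$, writing $w=I^m w'$ with $w'\in W$, anti-self-adjointness moves all the $I$'s onto $x$ and kills the pairing once $I^m x=0$, so $N\perp W$; nondegeneracy of $g$ on $V$ then forces $g|_W$ nondegenerate. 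Since $I$ is invertible on $W$, that is block $3$, and I am reduced to the case where $I$ is nilpotent.

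So assume $I$ nilpotent. The key structural fact is the classification of pairs (nilpotent operator, compatible bilinear form): I would decompose $V$ (as a $k[I]$-module) into Jordan blocks and analyze how $g$ pairs them. The crucial point is that $g(I^m v, I^{m'} v') = 0$ whenever $m+m' \geq \ell-1$ fails to hit the top, and more precisely $g$ induces a perfect pairing between $\ker I$ in one "height level" and the appropriate quotient; anti-self-adjointness $g(Ix,y) = -(-1)^{\bar x}g(x,Iy)$ means $g(I^m x, I^{m'} y)$ depends only on $m+m'$ up to a controlled sign. I would pick a cyclic vector $v$ generating a Jordan block of maximal size $2k$ or $k$ (the parity of this size being forced by the oddness of $g$: if the block has odd length $\ell$ and pairs with itself, then $g(I^{(\ell-1)/2}v, I^{(\ell-1)/2}v)$ is a nonzero value of $g$ on a single vector, and the symmetry sign $(-1)^{\bar x \bar y}$ together with oddness of $g$ produces the constraint that distinguishes case $1_k$ from $2_k$). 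The block $1_k$ arises when the maximal Jordan block is self-dual under $g$ with the self-pairing nonzero; the block $2_k$ arises when two Jordan blocks of equal size pair with each other but each is $g$-isotropic — the "hyperbolic" case. In each case I would exhibit the explicit cyclic generator(s), rescale to normalize the top pairing to $\pm 1$, compute the resulting values $g(I^m v, I^{m'}v)$ by repeatedly applying anti-self-adjointness (this gives the sign $(-1)^{m\bar v + \frac12 m(m+1)}$ by an easy induction on $m$), and then check that the $g$-orthogonal complement of the chosen block is $I$-stable and $g$-nondegenerate, so the induction continues on a strictly smaller space.

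The main obstacle is showing that the candidate block always splits off as a $g$-orthogonal direct summand — i.e. that a maximal Jordan block (or pair of maximal blocks in the hyperbolic case) has $g$-nondegenerate span, so that its orthogonal complement gives a clean reduction. This is where the choice of $v$ matters: a naive cyclic generator need not span a nondegenerate subspace, and one must correct $v$ by lower-order terms $v \mapsto v + I w + \dots$ to arrange that $\mathrm{span}(v,Iv,\dots)$ pairs nondegenerately with itself (case $1_k$) or that two chosen generators pair into a standard hyperbolic block with no "cross terms" to other blocks (case $2_k$). The characteristic $\neq 2$ hypothesis enters exactly here, in solving the linear systems for these corrections and in the rescaling that normalizes self-pairings; algebraic closure enters in extracting the square roots needed for that rescaling and in splitting off the invertible part $W$ along generalized eigenspaces. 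Once the splitting is established, everything else is bookkeeping with the sign $(-1)^{m\bar v + \frac12 m(m+1)}$, which I would verify by a direct induction using $g(I^{m+1}v, I^{m'}v) = -(-1)^{\overline{I^m v}} g(I^m v, I^{m'+1}v)$ and the identity $\overline{I^m v} = \bar v + m \bmod 2$.
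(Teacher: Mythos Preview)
The paper does not actually prove this theorem: its entire proof is ``See \cite{B5} and also \cite{T}.'' So there is no in-paper argument to compare against; the result is imported from the classification of pencils of symmetric and skew-symmetric forms (reference~\cite{T}) and the author's companion paper~\cite{B5}.

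Your proposal is a direct self-contained argument rather than an appeal to those references, and it follows the standard shape of such classifications: split off the invertible part by showing $\ker I^{\infty}\perp W$, then on the nilpotent part peel off maximal Jordan strings, distinguishing the self-paired case $1_k$ from the hyperbolic case $2_k$, and induct on the $g$-orthogonal complement. This is essentially the strategy underlying the cited pencil classification, so in spirit you are reproducing what \cite{T} and \cite{B5} do, just phrased in the $\mathbb{Z}/2\mathbb{Z}$-graded language. What your write-up buys is that it makes the argument visible inside the present setting rather than forcing the reader to translate from the ungraded pencil literature; what the paper's citation buys is brevity and the ability to invoke a fully worked-out case analysis.

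Two points to tighten if you carry this out in full. First, the constraint $\bar v \equiv k \pmod 2$ in block $1_k$ (and $\bar v + \bar u \equiv k \pmod 2$ in block $2_k$) does not fall out of the oddness of $g$ alone; it comes from combining the anti-self-adjointness recursion $g(I^{m+1}v,I^{m'}v)=-(-1)^{\bar v+m}g(I^m v,I^{m'+1}v)$ with the symmetry $g(x,y)=(-1)^{\bar x\bar y}g(y,x)$ applied at the ``midpoint'' $m+m'=2k-1$, and you should exhibit that computation explicitly. Second, the correction step ``$v\mapsto v+Iw+\cdots$ to make the span nondegenerate'' is exactly the delicate part of the pencil argument; in the graded setting you must also check that the correcting terms can be chosen of the right parity, which is where the parity constraint on $\bar v$ reappears as a consistency condition rather than an obstruction.
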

\begin{svmultproof}
See \cite{B5} and also \cite{T}. 
\end{svmultproof}

Let $I$ be an odd anti-selfadjoint operator $I$ in the canonical
form of the direct sum of blocks of types $1_{k}$, $2_{k}$ and $3$.
\begin{proposition}
\label{prop:IItilde1}There exists an odd operator $\tilde{I}$, such
that 
\begin{equation}
\left[I,\tilde{I}\right]=1\label{eq:IItild}
\end{equation}
 if and only if the decomposition of the operator $I$ into blocks
by the theorem \ref{thm:odd-anti-selfadj} has no blocks of type $2_{2n-1}$,
$n\in\mathbb{N}$. In this case the operator $\tilde{I}$ can be chosen
to be self-adjoint $g(\tilde{I}x,y)=(-1)^{\overline{x}}g(x,\tilde{I}y)$. 
\end{proposition}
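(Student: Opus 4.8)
The plan is to work block-by-block on the canonical decomposition provided by Theorem~\ref{thm:odd-anti-selfadj}, since the equation $[I,\tilde I]=1$ and the self-adjointness condition are both compatible with a direct sum decomposition orthogonal with respect to $g$: if $\tilde I$ exists on each block it exists on the whole space, and conversely the obstruction must already be visible on a single block. So the first step is to reduce to the three cases $1_k$, $2_k$, and $3$, and to observe that on a block of type $3$, where $I$ is invertible, we may simply take $\tilde I = I^{-1}$; anti-self-adjointness of $I$ makes $I^{-1}$ anti-self-adjoint as well, but actually we want self-adjointness, so a small adjustment is needed here. In fact on type $3$ the correct choice is more delicate: one wants an \emph{odd} $\tilde I$ with $[I,\tilde I]=1$, and since $I$ is an odd invertible operator, $\tilde I := I^{-1}$ gives $[I,I^{-1}] = II^{-1} - (-1)^{\overline{I}\cdot\overline{I^{-1}}}I^{-1}I = 1-(-1)\cdot 1 \cdot 1$... so with the sign convention for the supercommutator of two odd operators, $[I,\tilde I] = I\tilde I + \tilde I I$, and $\tilde I = \tfrac12 I^{-1}$ works; I would double-check the supercommutator sign convention used in \eqref{eq:IItild} and adjust the scalar accordingly. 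Self-adjointness of $\tfrac12 I^{-1}$ then follows from anti-self-adjointness of $I$.

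Next I would handle the nilpotent blocks explicitly. On a block of type $1_k$, the space has basis $v, Iv, \dots, I^{2k-1}v$ with $I^{2k}v = 0$, so $I$ is a single nilpotent Jordan block of size $2k$ acting by $I^{m}v \mapsto I^{m+1}v$. Here the natural candidate for $\tilde I$ is (a scalar multiple of) the "lowering" operator $I^{m}v \mapsto c_m I^{m-1}v$, with the $c_m$ chosen so that $[I,\tilde I] = 1$; this is a standard $\mathfrak{sl}_2$-type computation giving $c_m$ linear in $m$, and the point is to check that this choice can simultaneously be made self-adjoint with respect to the specific pairing $g(I^{m}v,I^{m'}v) = \delta_{m+m',2k-1}(-1)^{m\overline v + \frac12 m(m+1)}$. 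That is a finite sign bookkeeping check: one computes $g(\tilde I I^{m}v, I^{m'}v)$ and $(-1)^{\overline{I^{m}v}}g(I^{m}v, \tilde I I^{m'}v)$ and verifies equality using the given formula for $g$ and the parity $\overline{v} = k \bmod 2$. The analogous but easier computation works on one half of a type $2_k$ block when $k$ is \emph{even}: there the pairing is between the $v$-chain and the $u$-chain, $I$ lowers within each chain, and a lowering $\tilde I$ on each chain with suitably matched coefficients gives $[I,\tilde I]=1$ and can be arranged self-adjoint.

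The crux — and the place where the "if and only if" bites — is the type $2_{2n-1}$ block, i.e.\ $k$ odd. On such a block $V_k$ one has $\dim V_k = 2k$, the subspaces spanned by $\{I^m v\}$ and $\{I^m u\}$ are each isotropic of dimension $k$, and they pair into each other. To prove \emph{nonexistence} of $\tilde I$, I would argue by an invariant (trace/index) obstruction rather than by trying to solve the linear system directly. The key identity: if $[I,\tilde I]=1$ on a finite-dimensional $\mathbb{Z}/2\mathbb{Z}$-graded space, take the \emph{supertrace}. Since $\mathrm{str}[I,\tilde I] = 0$ for any pair of operators (one odd, one odd — the supercommutator of two odd operators has supertrace zero because $\mathrm{str}(XY) = -(-1)^{\overline X\,\overline Y}\mathrm{str}(YX)$ for homogeneous operators, so $\mathrm{str}(XY+YX) = \mathrm{str}(XY) - \mathrm{str}(XY) = 0$ when both are odd), we get $\mathrm{str}(1) = 0$, i.e.\ $\sdim V = 0$ — so we need the graded dimension of each block to vanish, which is automatic from the parity formulas and hence gives no obstruction. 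The real obstruction must be finer. The right move is: on a $2_k$ block, the bilinear form $g$ identifies the $u$-chain with the dual of the $v$-chain (up to signs), and the condition $[I,\tilde I]=1$ with $\tilde I$ self-adjoint and $I$ anti-self-adjoint forces, after transporting through $g$, a compatibility that is equivalent to solvability of $[N, M] = 1$ on a $k$-dimensional space $N$ the standard nilpotent Jordan block, \emph{together with} a symmetry constraint coming from self-adjointness; writing $M$ as a lower-triangular matrix, $[N,M]=1$ forces the subdiagonal entries of $M$ to be $1, 2, \dots$ (or $k-1, \dots, 1$), and then the self-adjointness/odd-scalar-product sign condition demands $(-1)^{\text{something in }m} = +1$ for all $m$ in a range whose length forces a contradiction exactly when $k$ is odd. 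So the plan for the "only if" direction is: restrict a hypothetical $\tilde I$ to the $2_{2n-1}$ block (using $g$-orthogonality of the decomposition to see the restriction still satisfies $[I,\tilde I]=1$ there), reduce to a $k\times k$ matrix equation via $g$, pin down $\tilde I$ up to the kernel of $\mathrm{ad}(I)$, and extract the parity sign obstruction; I expect this sign/parity contradiction for $k$ odd to be the main obstacle and the part requiring the most care. For the "if" direction, assemble the block-wise $\tilde I$'s (types $1_k$, $2_{2n}$, $3$) into a single self-adjoint odd operator on the whole space, which works because the decomposition is $g$-orthogonal.
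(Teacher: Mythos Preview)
Your block-by-block strategy and the treatment of type $3$ (with $\tilde I=\tfrac12 I^{-1}$) match the paper exactly. Two concrete points, however, need fixing.

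\textbf{The ``if'' direction on nilpotent blocks.} You propose a lowering operator $\tilde I(I^m v)=c_m I^{m-1}v$ with $c_m$ \emph{linear in $m$}, calling it an $\mathfrak{sl}_2$ computation. That recipe solves the \emph{ordinary} commutator $NM-MN=1$. Here both $I$ and $\tilde I$ are odd, so $[I,\tilde I]=I\tilde I+\tilde I I$, and the recursion is $c_m+c_{m+1}=1$, forcing the $c_m$ to \emph{alternate} between $0$ and $1$. This is precisely what the paper writes down: $\tilde I(I^{2l+1}v)=I^{2l}v$, $\tilde I(I^{2l}v)=0$ on type $1_k$, and analogously on type $2_{2k}$. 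Once you correct the recursion, the self-adjointness check goes through as you outlined.

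\textbf{The ``only if'' direction.} Your plan hinges on a ``symmetry constraint coming from self-adjointness'' of $\tilde I$. But the proposition asserts nonexistence of \emph{any} odd $\tilde I$ with $[I,\tilde I]=1$; self-adjointness is the extra conclusion, not a hypothesis, so you cannot use it in the obstruction. The paper avoids $g$ entirely here: after taking the block-diagonal component on the $2_{2k-1}$ block $W$ (which works because $I$ preserves $W$, not because of $g$-orthogonality), it does the \emph{same} block-diagonal reduction once more, since $W$ splits as the $I$-invariant direct sum of the $v$-chain and the $u$-chain. On the $v$-chain alone---a single Jordan string of \emph{odd} length $2k-1$---the paper then runs an explicit induction from the top $I^{2k-2}v$ down, and finds that $[I,\tilde I]v$ is forced to lie in $\mathrm{Image}(I)$, hence cannot equal $v$.

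Incidentally, your abandoned supertrace idea is exactly right \emph{once you restrict to the $v$-chain}: there the even and odd parts have dimensions $k$ and $k-1$, so $\mathrm{sdim}=\pm 1\neq 0$, and $\mathrm{str}(I\tilde I+\tilde I I)=0$ gives the contradiction immediately. On the full block $W$ the $u$-chain has the opposite superdimension (since $\bar v+\bar u=1$), which is why you saw no obstruction there.
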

\begin{svmultproof}
Let us put for the elements of blocks of type $1_{k}$:
\[
\tilde{I}(I^{2l+1}v)=I^{2l}v,\;\tilde{I}(I^{2l}v)=0,\;l=0,\ldots,k-1,
\]
for the elements of blocks of type $2_{2k}$:
\[
\tilde{I}(I^{2l-1}v)=I^{2l}v,\;\tilde{I}(I^{2l-1}u)=I^{2l}u,\;\tilde{I}(I^{2l}v)=\tilde{I}(I^{2l}u)=0,\;l=0,\ldots,k-1,
\]
define also $\tilde{I}$ on block of type $3$:
\[
\tilde{I}=\frac{1}{2}I^{-1}.
\]
Then $\left[\tilde{I},I\right]=1$ and also $g(\tilde{I}x,y)=(-1)^{\overline{x}}g(x,\tilde{I}y)$. 

Assume now that such operator $\tilde{I}$ exists on a sum $V=W\oplus W'$
of two $I-$invariant subspaces, where $W$ is a block of type $2_{2k-1}$.
Let $\tilde{I}_{1}$ be the block-diagonal component of $\tilde{I}$
acting on $W$, $\tilde{I}=\begin{pmatrix}\tilde{I}_{1} & \tilde{I}_{3}\\
\tilde{I}_{2} & \tilde{I}_{4}
\end{pmatrix}$. Then $\left[\tilde{I}_{1},I\right]=1$on $W$, so it is enough to
consider the case $V=W$. Since $W$ is again the sum of two $I-$invariant
subspaces, it is enough to check that the operator satisfying the
\eqref{eq:IItild} cannot exist on the $I-$invariant subspace spanned
by $v,Iv,\ldots,I^{2k-2}v$. Since $I\left(I^{2k-2}v\right)=0$, then
$\tilde{I}\left(I^{2k-2}v\right)=I^{2k-3}v$, and therefore $\tilde{I}\left(I^{2k-3}v\right)=a_{0}I^{2k-2}v$.
It follows that $\tilde{I}\left(I^{2k-4}v\right)=I^{2k-5}v-a_{0}I^{2k-3}v$
and therefore $\tilde{I}\left(I^{2k-5}v\right)=a_{0}I^{2k-4}v+a_{1}I^{2k-2}v$,
etc. $\tilde{I}\left(Iv\right)=a_{0}I^{2}v+\sum_{j\geq1}a_{j}I^{2(j+1)}v$.
It follows that $\left[I,\tilde{I}\right]v\in\textrm{Image}\left(I\right)$
and therefore $\left[I,\tilde{I}\right]v\neq v$.
\end{svmultproof}

Such an odd anti-self-adjoint operator $I$ satisfying \eqref{eq:IItild}
is called homotopy invertible. 

This notion extends the notion of ``contractible differential''
to the case when $I^{2}\neq0$.

\subsection{Jourdan blocks decomposition for the pair of quadratic forms.}

The odd non-degenerate symmetric pairing $g$ is uniquely determined
by the linear isomorphism $g:V_{0}^{\vee}\simeq\Pi V_{1}$. The components
of the anti-self-adjoint operator $I$ are then the linear maps $I_{10}\in\textrm{Hom}_{k}(V_{0},V_{0}^{\vee})$,
$I_{01}\in\textrm{Hom}_{k}(V_{0}^{\vee},V_{0})$, $I_{10}$ is an
anti-symmetric pairing on $V_{0}$ and $I_{01}$ is a symmetric pairing
on $V_{0}^{\vee}$. 
\begin{proposition}
The anti-self-adjoint operators $I$ on $V$ are in one-to-one correspondence
with pairs $\left(h,h'\right)$ where $h$ is an anti-symmetric quadratic
form on $V_{0}$ and $h'$ is a symmetric quadratic form on $V_{0}^{\lor}$.
The decomposition of $I$ from the theorem \ref{thm:odd-anti-selfadj}
is equivalent to an analog of the ``Jordan blocks'' decomposition
for such pairs.
\end{proposition}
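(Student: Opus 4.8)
The plan is to unwind the dictionary between anti-self-adjoint operators and pairs of quadratic forms, and then observe that the normal-form decomposition of Theorem \ref{thm:odd-anti-selfadj} translates verbatim into a simultaneous normal form for the pair $(h,h')$. First I would establish the correspondence. Since $g$ is odd and non-degenerate, it identifies $\Pi V_1\simeq V_0^\vee$, hence $V$ is recovered from $V_0$ alone up to the choice of $g$. An odd operator $I$ decomposes into its two off-diagonal components $I_{10}\colon V_0\to V_1$ and $I_{01}\colon V_1\to V_0$; composing with $g$ turns $I_{10}$ into a bilinear form $h$ on $V_0$ (namely $h(x,y)=g(I x,y)$ for $x,y\in V_0$) and $I_{01}$ into a bilinear form $h'$ on $V_0^\vee$. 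The anti-self-adjointness relation $g(Ix,y)+(-1)^{\overline x}g(x,Iy)=0$, read separately on the even--even and odd--odd pieces, is exactly the statement that $h$ is anti-symmetric and $h'$ is symmetric (the sign $(-1)^{\overline x}$ being trivial on $V_0$ and $(-1)$ on $V_1$, which flips symmetric into anti-symmetric and vice versa under the transpose). Conversely any such pair $(h,h')$ reconstitutes $I_{10},I_{01}$ via $g$, and the two relations guarantee $I$ is anti-self-adjoint; these two assignments are visibly mutually inverse, giving the bijection.

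Next I would spell out the second assertion. A transformation preserving $g$ acts on $V_0$ by an arbitrary $T\in GL(V_0)$ and on $V_1=\Pi V_0^\vee$ by the contragredient $(T^{-1})^\vee$; under such a change of basis $h$ and $h'$ transform as honest quadratic forms on $V_0$ and on $V_0^\vee$ respectively, by $h\mapsto h\circ(T\times T)$ and $h'\mapsto h'\circ((T^{-1})^\vee\times (T^{-1})^\vee)$. Hence classifying anti-self-adjoint $I$ up to $g$-isometry is the same as classifying pairs $(h,h')$ — an anti-symmetric form on a vector space together with a symmetric form on its dual — up to the simultaneous $GL(V_0)$-action. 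The blocks $1_k$, $2_k$, $3$ of Theorem \ref{thm:odd-anti-selfadj}, rewritten through $x\mapsto h(x,\cdot)$ and the pairing on $V_0^\vee$, become precisely the indecomposable pieces of such a pair: block $3$ is the part where $h$ (equivalently $h'$) is nondegenerate, while blocks $1_k$ and $2_k$ are the "nilpotent" indecomposables where the composite $I_{01}I_{10}$ is a single Jordan block acting on an isotropic-type or hyperbolic-type subspace. One then reads off that the canonical form of $I$ is literally a direct-sum (i.e. "Jordan blocks") decomposition of $(h,h')$, which is the content of the proposition.

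The main obstacle I expect is purely bookkeeping rather than conceptual: keeping the Koszul signs straight when translating $g(Ix,y)$ into transposes of forms, and making sure the parity $\overline v = k \bmod 2$ attached to the generator of a block $1_k$ (and $\overline v+\overline u=k\bmod 2$ for $2_k$) is exactly what is needed for the block to sit inside the even/odd splitting $V=V_0\oplus V_1$ coherently — that is, for $h$ and $h'$ to end up on $V_0$ and $V_0^\vee$ and not get mixed. Once the sign conventions are pinned down, no real computation is left: the proposition is a reformulation of Theorem \ref{thm:odd-anti-selfadj}, so I would simply say so and refer back to it, noting that the details of the block-by-block matching are routine and can be found in \cite{B5}, \cite{T}.
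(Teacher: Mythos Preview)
Your proposal is correct and matches the paper's approach: in fact the paper gives no formal proof of this proposition at all, treating it as an immediate restatement of the paragraph preceding it (which already identifies the components $I_{10}$, $I_{01}$ with an anti-symmetric form on $V_0$ and a symmetric form on $V_0^\vee$). Your write-up simply spells out that identification and then observes that the block decomposition of Theorem~\ref{thm:odd-anti-selfadj} is, under the dictionary, a simultaneous normal form for the pair $(h,h')$ --- exactly what the paper intends.
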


\subsection{Nondegeneracy of $g(Ix,x)$ on $\left(\Pi V\right)_{0}$ $\Rightarrow$
$I$ is homotopy invertible.}

The odd anti-self-adjoint operator $I$, or the pair $\left(h,h'\right)$
where $h$ is an anti-symmetric quadratic form on $V_{0}$ and $h'$
is a symmetric quadratic form on $V_{0}^{\lor}$, is the same as an
even quadratic function on $\Pi V$. The correspondence is given by
\[
I\leftrightarrow g(Ix,x),\;x\in\Pi V.
\]

\begin{proposition}
\label{prop:nondegenquadr}Let the restriction of the quadratic function
$g(Ix,x)$ on $\left(\Pi V\right)_{0}$ is a nondegenerate quadratic
form. Then the decomposition of $I$ has no blocks of type $2_{2n-1}$
and the operator $\widetilde{I}$ defined in the proposition \ref{prop:IItilde1}
satisfies. 
\[
\left[I,\tilde{I}\right]=1.
\]
\end{proposition}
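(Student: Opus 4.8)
The strategy is to connect the non-degeneracy hypothesis on the quadratic form $g(Ix,x)|_{(\Pi V)_0}$ to the classification of blocks in Theorem \ref{thm:odd-anti-selfadj}, and then invoke Proposition \ref{prop:IItilde1}. The key observation is that for $x\in\Pi V$, the value $g(Ix,x)$ is even, so only the even part $(\Pi V)_0$ carries a genuine quadratic form, and we must show this form degenerates precisely when a block of type $2_{2n-1}$ is present. First I would examine each block type from Theorem \ref{thm:odd-anti-selfadj} separately and compute the restriction of $g(Ix,x)$ to its even part under parity inversion $\Pi$, using the explicit pairing formulas given there.

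The heart of the computation is block-by-block. For a block of type $3$, $I$ is invertible and $g(Ix,x)$ restricts to a non-degenerate even form, contributing nothing to the radical. For a block of type $1_k$, spanned by $v,Iv,\dots,I^{2k-1}v$, one checks from $g(I^m v,I^{m'}v)=\delta_{m+m',2k-1}(-1)^{m\bar v+\frac12 m(m+1)}$ that $g(I\cdot,\cdot)$ pairs $I^m v$ with $I^{m'}v$ when $m+m'=2k-2$; after passing to $\Pi$ and restricting to the even subspace, this pairing is still non-degenerate (it is a ``shifted'' version of the block pairing), so type $1_k$ blocks also contribute nothing to the radical. For a block of type $2_k$, spanned by $v,u,Iv,Iu,\dots,I^{k-1}v,I^{k-1}u$ with the only nonzero pairings being $g(I^m v,I^{m'}u)=\delta_{m+m',k-1}(\pm1)$, the form $g(Ix,x)$ pairs the $v$-chain against the $u$-chain with a shift by one, i.e. $I^m v$ against $I^{m'}u$ with $m+m'=k-2$. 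When $k=2n-1$ is odd, the relevant even-parity subspace of $\Pi(\text{block})$ has a radical: the chain has even length in the wrong way, and the shifted pairing on the even part drops rank by one, so $g(Ix,x)|_{(\Pi V)_0}$ is degenerate. When $k$ is even, or for a type $1_k$ block, no such degeneracy occurs. Thus non-degeneracy of $g(Ix,x)|_{(\Pi V)_0}$ forces the absence of all blocks of type $2_{2n-1}$.

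Having established that the decomposition of $I$ has no blocks of type $2_{2n-1}$, the conclusion $[I,\tilde I]=1$ is immediate from Proposition \ref{prop:IItilde1}, with $\tilde I$ being exactly the operator defined there. I expect the main obstacle to be the careful parity bookkeeping in the type $2_{2n-1}$ case: one has to track which basis vectors $I^m v$, $I^m u$ land in the even part $(\Pi V)_0$ versus the odd part, using $\overline{v}+\overline{u}=k\bmod 2$ and the parity shift of $I$, and then verify that on the even part the antisymmetric-by-shift pairing coming from $g(I\cdot,\cdot)$ genuinely has a nontrivial kernel exactly when the chain length $k$ is odd. The rest is routine: the other block types are handled by the same computation showing the shifted pairing remains non-degenerate, and assembling the direct sum of radicals gives the claim.
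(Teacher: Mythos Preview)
Your proposal is correct, but it does considerably more work than the paper. The proposition only requires the contrapositive: if a block of type $2_{2n-1}$ is present, then $g(Ix,x)|_{(\Pi V)_0}$ is degenerate. Since the block decomposition of Theorem \ref{thm:odd-anti-selfadj} is orthogonal for $g$ and preserved by $I$, the quadratic form $g(Ix,x)$ splits as an orthogonal direct sum over blocks; hence it suffices to exhibit a single nonzero radical element coming from one $2_{2n-1}$ block. Your analysis of blocks of types $1_k$, $3$, and $2_{2k}$ is therefore unnecessary for the proposition as stated (though it would yield the converse).

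The paper's proof is essentially a one-line observation that bypasses the shifted-pairing computation entirely: for a block of type $2_{2n-1}$ one has $\bar v+\bar u=1$, so exactly one of the top elements $I^{k-1}v$, $I^{k-1}u$ (both lying in $\ker I$, since $I^k v=I^k u=0$) is odd in $V$, hence even in $\Pi V$. But any element of $\ker I$ landing in $(\Pi V)_0$ is automatically in the radical of $g(Ix,x)|_{(\Pi V)_0}$. This is precisely the null vector your rank-drop argument would eventually locate, found without the parity bookkeeping on the full chain. Once the absence of $2_{2n-1}$ blocks is established, $[I,\tilde I]=1$ follows from Proposition \ref{prop:IItilde1}, as you note.
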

\begin{svmultproof}
Notice that $\bar{v}+\bar{u}=1$ for the block of type $2_{2n-1}$,
and therefore one of the elements $I^{k-1}v,I^{k-1}u$ from the kernel
of $I$ is always odd and then the quadratic function $g(Ix,x)$ on
$\left(\Pi V\right)_{0}$ is degenerate.
\end{svmultproof}

\section{The ribbon graph complex.}

Recall the definition of the (even) ribbon graph complex $(C_{\ast},d)$,
see (\cite{GK}) and references therein.
\begin{definition}
A ribbon graph $G$ (without legs) is a triple $(Flag(G),$ $\sigma,$
$\eta)$, where $Flag(G)$ is a finite set, whose elements are called
flags, $\sigma$ is a permutation from $Aut(Flag(G))$ with orbits
of length greater than two, and $\eta$ is a fixed-point free involution
acting on $Flag(G)$. 
\end{definition}
The vertices of the graph correspond to cycles of the permutation
$\sigma$. The set of vertices is denoted by $Vert(G)$. The subset
of $Flag(G)$ corresponding to vertex $v$ is denoted by $Flag(v)$.
The set $Flag(v)$ has natural cyclic order since it is a cycle of
$\sigma$. The cardinality of $Flag(v)$ is called the valence of
$v$ and is denoted $n(v)$. It is assumed that $n(v)\geq3$. The
edges of the graph are the pairs of flags forming a two-cycle of the
involution $\eta$. The set of edges is denoted $Edge(G)$. The subset
of edges which are loops is denoted by $Loop(G)$. The edges which
are not loops are called \char`\"{}regular\char`\"{} and I denote
the corresponding subset of $Edge(G)$ by $Edge_{r}(G)$. The two
element subset of $Flag(G)$ corresponding to an edge $e\in Edge(G)$
is denoted $flag(e)$. I denote by $S_{G}$ the Riemann surface associated
with the ribbon graph $G$ (see loc.cit. and references therein).
I denote by $|G|$ the one-dimensional CW-complex which is the geometric
realization of the underlying graph $G$.
\begin{definition}
The even ribbon graph complex is the graded vector space generated
by equivalence classes of pairs $(G,or(G))$, where $G$ is a connected
ribbon graph, $or(G)$ is an orientation on the vector space 
\begin{equation}
\bigotimes_{v\in Vert(G)}(k^{Flag(v)}\oplus k),\label{eq:orRib}
\end{equation}
 with the relation $(G,-or(G))=-(G,or(G))$ imposed, equipped with
the differential 
\[
D(G,or(G))=\sum_{\left[e\right]\in Edge_{r}(G)}(G/\{e\},\text{induced\thinspace}\,\text{orientation})
\]
\end{definition}
The sum is over edges of $G$ which are not loops. The fact that the orientation $\eqref{eq:orRib}$
is equivalent to the orientation, defined as the orientation on the
vector space $(k^{Edge(G)}\oplus k^{H_{1}(G)})$, follows from (\cite{GK},
4.14). A choice of orientation can be fixed by a choice of a flag
from $Flag(v)$ for all vertices $v$ having \emph{even} valency and
a choice of an order on the set of such vertices. Two different choices
are related by a set of cyclic permutations on $Flag(v)$ for vertices
$v$ having even number of flags and a permutation on the set of such
vertices. The two corresponding orientations differ by the sign equal
to the product of signs of all these permutations.

The induced orientation on $G/\{e\}$ can be easily worked out. In
the case when the edge $e$ connects two vertices $v_{1}$ and $v_{2}$
with even number of flags, assume, that the order on the set of vertices
with even number of flags is such that $v_{1}<v_{2}$ and no other
such vertices are between them, and that the flags $f_{i}\in Flag(v_{i})$,
$i=1,2$, fixing the orientation, form the edge $e$. Then the induced
orientation for $G/\{e\}$ is defined by placing the new vertex $v_{0}$,
obtained from $v_{1}$ and $v_{2}$ after shrinking of $e$, to the
place of $v_{1}$ and by choosing the element corresponding to $\sigma_{G}(f_{1})$
in $Flag(v_{0})$. In the case when the edge $e$ connects two vertices
$v_{1}$ and $v_{2}$ with odd number of flags, the induced orientation
for $G/\{e\}$ is defined by placing the new vertex $v_{0}$, to the
lowest level in the set of vertices with even number of flags, and
by choosing the flag corresponding to $\sigma_{G}(f)$ in $Flag(v_{0}),$where
$f$ is either of the two flags forming $e$.

Denote via $(C^{\ast},d)$ the complex dual to the ribbon graph complex.
The generators of both complexes corresponding to oriented ribbon
graphs can be identified when it does not seem to lead to a confusion. 

\section{The truncated partition function\label{sec:partfunc}.}

The dual scalar product $g^{-1}$ is defined on $\Pi\limfunc{Hom}(A,k)$,
the dual space with parity inversed. If $P$ denotes the isomorphism
$A\rightarrow\Pi\limfunc{Hom}(A,k)$ induced by $g$, then 
\[
g^{-1}(\varphi,\psi)=g(P^{-1}\varphi,P^{-1}\psi).
\]
Recall $\widetilde{I}$ denotes the homotopy inverse to $I$ odd self-adjoint
operator, see propositions \ref{prop:IItilde1}, \ref{prop:nondegenquadr}.
Denote by $\widetilde{I}^{\ast}$the dual to $\widetilde{I}$, odd
self-adjoint operator, acting on $\Pi\limfunc{Hom}(A,k)$:
\[
\widetilde{I}^{\ast}\varphi(a)=(-1)^{\overline{\varphi}}\varphi(\widetilde{I}a),\,
\]
then 
\[
\widetilde{I}^{\ast}=P\widetilde{I}P^{-1},
\]
since $\widetilde{I}$ is self-adjoint. The odd self-adjoint operator
$\widetilde{I}^{\ast}$ defines the even symmetric pairing $g_{\widetilde{I}}^{-1}$
on $\Pi\limfunc{Hom}(A,k)$ : 
\[
g_{\widetilde{I}}^{-1}(\varphi,\psi)=g^{-1}(\widetilde{I}^{\ast}\varphi,\psi),\,\,\,g_{\widetilde{I}}^{-1}\in(\Pi A)^{\otimes2}
\]

\begin{definition}
Given a ribbon graph $G$, the tensor product of the even symmetric
tensors $(g_{\widetilde{I}}^{-1})_{e}$, associated with every edge
$e$ of $G$, $(g_{\widetilde{I}}^{-1})_{e}\in(\Pi A)^{\otimes flag(e)}$,
defines the canonical element 
\[
g_{\widetilde{I},G}=\bigotimes_{e\in Edge(G)}(g_{\widetilde{I}}^{-1})_{e},~~g_{\widetilde{I},G}\in(\Pi A)^{\otimes Flag(G)}.
\]

Define the $(-1)^{n+1}$-cyclic tensors $\alpha_{n}\in$ $\limfunc{Hom}((\Pi A)^{\otimes n},k)$,
$n\geq2$
\[
\alpha_{n}(\pi a_{1},\ldots,\pi a_{n})=(-1)^{\Sigma_{i=1}^{n-1}\Sigma_{j=1}^{i}\overline{a}_{j}}g(a_{1}\cdot a_{2}\cdot\ldots\cdot a_{n-1},\,a_{n})
\]
\end{definition}
(a tensor from $V^{\otimes n}$ is $(-1)^{n+1}$-cyclically symmetric
if under the elementary cyclic shift it is multiplied by the Koszul
sign resulting from parities of elements of $V$ times the sign of
the cyclic permutation, which is $(-1)^{n+1}$). 

Notice that the space $Flag(v)$ is canonically oriented, thanks to
the cyclic order, for a vertex $v\in Vert(G)$ of odd valency, $n(v)\equiv1\func{mod}2$.
For a vertex $v\in Vert(G)$ of the valency $n(v)\equiv0\func{mod}2$,
let $or(v)$ denotes an orientation on the vector space $Flag(v)$.
Because of cyclic order on $Flag(v)$, $or(v)$ is fixed canonically
by a choice of an element $f\in Flag(v)$.
\begin{definition}
The canonical elements 
\[
\alpha_{v}\in\limfunc{Hom}((\Pi A)^{\otimes Flag(v)},k)
\]
are defined by the cyclically symmetric tensors $\alpha_{2n+1}$,
$n\in\mathbb{N}$ for all vertices of $\emph{odd}$ valence and by
the cyclically \emph{anti-symmetric} tensors $\alpha_{2n+2}$, $n\in\mathbb{N}$
for all vertices of \emph{even }valence equipped with a choice of
orientation $or(v)$ fixed canonically by a choice of an element $f\in Flag(v)$. 
\end{definition}
\begin{proposition}
Given an oriented ribbon graph $G$ with orientation $or(G)$, the
product over all vertices $v$ of the $(-1)^{n(v)+1}$-cyclically
symmetric tensors $\alpha_{v}$ defines the canonical element 
\[
\alpha_{G,or(G)}=\bigotimes_{v\in Vert(G)}\alpha_{v},~~\alpha_{G,or(G)}\in\limfunc{Hom}((\Pi A)^{\otimes Flag(G)},k).
\]
\end{proposition}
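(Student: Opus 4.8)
The plan is to verify that the tensors $\alpha_v$ assigned to individual vertices, each of which carries a well-defined cyclic (anti)symmetry, can be glued into a single tensor $\alpha_{G,or(G)}\in\limfunc{Hom}((\Pi A)^{\otimes Flag(G)},k)$ whose only remaining ambiguity is precisely the sign recorded by $or(G)$. First I would recall that $Flag(G)=\coprod_{v\in Vert(G)}Flag(v)$ as a set, so $(\Pi A)^{\otimes Flag(G)}\cong\bigotimes_{v}(\Pi A)^{\otimes Flag(v)}$; the naive tensor product $\bigotimes_v\alpha_v$ is then a functional on this space. The point is that this identification of the big tensor space as an \emph{ordered} tensor product requires (i) a choice of total order on $Vert(G)$ and, for each vertex, (ii) a choice of identification of the \emph{cyclically} ordered set $Flag(v)$ with a \emph{linearly} ordered one, i.e. a choice of a starting flag $f\in Flag(v)$. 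I would show that changing these choices multiplies $\bigotimes_v\alpha_v$ by exactly the sign by which the corresponding orientation of $\bigotimes_{v}(k^{Flag(v)}\oplus k)$ changes, so the pair (tensor, orientation data) descends to a well-defined element $\alpha_{G,or(G)}$.

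The key steps, in order: (1) Fix a reference datum — a total order on the vertices and a starting flag for each vertex — and define $\alpha_{G,or(G)}$ as $\bigotimes_v \alpha_v$ computed with respect to that datum, tensored in the chosen vertex order. (2) Check invariance under changing the starting flag $f\mapsto \sigma_v(f)$ at a single vertex $v$: by the definitions of $\alpha_{2n+1}$ and $\alpha_{2n+2}$ (and the parenthetical characterisation of $(-1)^{n(v)+1}$-cyclic symmetry), $\alpha_v$ picks up the Koszul sign times $(-1)^{n(v)+1}$. For odd $n(v)$ this is $+1$, consistent with the fact that $Flag(v)$ is canonically oriented; for even $n(v)$ this is the sign by which $or(v)$, viewed as an orientation of $k^{Flag(v)}$ determined by the choice of $f$, changes under the cyclic shift — and since $k^{Flag(v)}\oplus k$ differs from $k^{Flag(v)}$ only by a fixed even summand, the orientation sign matches. (3) Check invariance under transposing two adjacent vertices in the vertex order: reordering the tensor factors introduces a Koszul sign from the parities of the two blocks $(\Pi A)^{\otimes Flag(v_i)}$, and this equals the sign by which the orientation of $\bigotimes_v(k^{Flag(v)}\oplus k)$ changes when those two factors are swapped; here one uses that each $k^{Flag(v)}\oplus k$ has dimension $n(v)+1$, whose parity is the parity of the block $(\Pi A)^{\otimes Flag(v)}$ (the extra $\oplus k$ being what makes even-valence vertices contribute an odd-dimensional, hence "sign-sensitive", factor). (4) Conclude that $\alpha_{G,or(G)}$ depends only on the isomorphism class of $G$ together with $or(G)$, and that $(G,-or(G))$ gives $-\alpha_{G,or(G)}$, matching the relation imposed in the ribbon graph complex.

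The main obstacle is the bookkeeping of signs in step (2)–(3): one must carefully track three sources of sign — the Koszul signs from permuting parity-odd tensor factors, the combinatorial signs of the permutations themselves, and the $(-1)^{n+1}$ twist built into the cyclic symmetry of $\alpha_n$ — and confront them against the orientation conventions on $\bigotimes_{v}(k^{Flag(v)}\oplus k)$ recalled just before the statement (in particular the remark that a choice of orientation is fixed by choosing a flag at each even-valence vertex and an order on those vertices). The cleanest way to organise this is to treat odd- and even-valence vertices uniformly by noting that $\dim(k^{Flag(v)}\oplus k)=n(v)+1$ always has parity equal to that of the tensor block $(\Pi A)^{\otimes Flag(v)}$, which reduces both the starting-flag independence and the vertex-reordering independence to a single statement: the sign of the corresponding permutation acting on the orientation line equals the Koszul/combinatorial sign it induces on the tensor, which is exactly what the construction of $\alpha_{2n+1}$ and $\alpha_{2n+2}$ was rigged to guarantee. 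Everything else is a direct unwinding of the definitions, so I would present steps (1)–(4) compactly and leave the explicit sign verifications as routine.
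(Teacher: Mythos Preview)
Your proposal is correct and follows essentially the same approach as the paper: both arguments rest on the observation that the parity of $\alpha_{n}$ is $n+1\bmod 2$, so that only even-valence vertices contribute odd tensors, and the orientation data $or(G)$ --- a starting flag at each even-valence vertex together with an order on such vertices --- supplies precisely the choices needed to make $\bigotimes_{v}\alpha_{v}$ well-defined. The paper states this directly, while you package it as an explicit ``fix a reference datum, check invariance under generators'' verification; the only cosmetic slip is that in step~(3) the relevant parity is that of the tensor $\alpha_{v}$ itself, not of the ``block'' $(\Pi A)^{\otimes Flag(v)}$, though you land on the correct value $n(v)+1$ regardless.
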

\begin{svmultproof}
The orientation $or(G)$ is a choice of an element of $Flag(v)$ for
all vertices $v$ of \emph{even} valence, and a choice of an order
on the total set of such vertices of \emph{even }valencies. The first
part gives a choice of orientation $or(v)$ for all vertices of even
valence, and therefore well-defined elements $\alpha_{v}$ for all
vertices. Notice that the parity of $\alpha_{n}$ is even for odd
$n$, and odd for even $n$. Therefore with the choice of order on
the set of vertices of even valency the product $\bigotimes_{v\in Vert(G)}\alpha_{v}$
gives a well-defined element from $Hom((\Pi A)^{\otimes Flag(G)},k)$.
In more details, the orientation $or(G)$ fixes the signs in the definition
of $\alpha_{G,or(G)}\in\limfunc{Hom}((\Pi A)^{\otimes Flag(G)},k)$
by dictating for every vertex of even valence $n(v)$ which element
$a_{i}$ is to be placed the first inside $\alpha_{n(v)}$, and then
in which order the tensors $\alpha_{n(v)}$ are to be multiplied,
other choices are irrelevant because the sign of cyclic permutation
is always positive for odd number of elements, and because $\alpha_{n}$
defines an even tensor for odd $n$. 
\end{svmultproof}

\begin{definition}
The partition function $Z_{\widetilde{I}}^{G,or(G)}$ of an oriented
ribbon graph $G$ is the contraction of $\bigotimes_{v\in Vert(G)}\alpha_{v}$
with $\bigotimes_{e\in Edge(G)}(g_{\widetilde{I}}^{-1})_{e}$
\[
Z_{\widetilde{I}}^{G,or(G)}=\left\langle \bigotimes_{v\in Vert(G)}\alpha_{v},\bigotimes_{e\in Edge(G)}(g_{\widetilde{I}}^{-1})_{e}\right\rangle 
\]
 The sum over all equivalence classes of connected ribbon graphs defines
the cochain $Z_{\widetilde{I}}$ 
\begin{equation}
Z_{\widetilde{I}}=\sum_{[G]}Z_{\widetilde{I}}^{(G,or(G))}(G,or(G)),~Z_{\widetilde{I}}\in(C^{\ast},d)\label{ZI}
\end{equation}
\end{definition}
Notice that the element $Z_{\widetilde{I}}^{G,or(G)}$ $(G,or(G))$
of $(C^{\ast},d)$ does not depend on the choice of $or(G)$, since
a change of orientation changes the signs of both $Z_{\widetilde{I}}^{G,or(G)}$
and $(G,or(G))$.

\section{Action of $I$ on the tensors $g_{\widetilde{I}}^{-1}$ and $\alpha_{n}$.}

I denote by $I^{\ast}$ the odd dual to $I$ operator. It is an odd
anti-self-adjoint operator and it is acting on $\limfunc{Hom}(\Pi A,k)$
via:
\[
I^{\ast}\varphi(a)=(-1)^{\overline{\varphi}}\varphi(Ia)\,
\]
The action of any endomorphism of the super vector space $\limfunc{Hom}(\Pi A,k)$
is naturally extended to its tensor algebra $\oplus_{n}(\limfunc{Hom}(\Pi A,k))^{\otimes n}$
as a derivation, by the Leibnitz rule. The dual action on $\oplus_{n}(\Pi A)^{\otimes n}$
corresponds to the similar extension of the action of $I$ on $\Pi A$.
\begin{proposition}
\label{IgI}The result of action of the operator $I$ on $g_{\widetilde{I}}^{-1}$is
\[
I(g_{\widetilde{I}}^{-1})=g^{-1}
\]
\end{proposition}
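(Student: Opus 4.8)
The plan is to unwind the definitions and reduce the claim $I(g_{\widetilde I}^{-1}) = g^{-1}$ to the single operator identity $[I,\widetilde I]=1$ established in Proposition \ref{prop:IItilde1}. First I would recall that $g_{\widetilde I}^{-1}\in(\Pi A)^{\otimes 2}$ is, by its definition in section \ref{sec:partfunc}, the image of the canonical element $g^{-1}\in(\Pi A)^{\otimes 2}$ under the operator $\widetilde I\otimes 1$ (equivalently $1\otimes\widetilde I$, by self-adjointness of $\widetilde I$): indeed $g_{\widetilde I}^{-1}(\varphi,\psi) = g^{-1}(\widetilde I^{\ast}\varphi,\psi)$, and dualizing turns the application of $\widetilde I^{\ast}$ on the functional side into the application of $\widetilde I$ on the corresponding tensor factor of $g^{-1}$. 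So as an element of $(\Pi A)^{\otimes 2}$ we may write $g_{\widetilde I}^{-1} = (\widetilde I\otimes 1)\, g^{-1}$.

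Next I would use that the action of $I$ on the tensor $g_{\widetilde I}^{-1}\in(\Pi A)^{\otimes 2}$ is by the Leibniz rule, as spelled out in the paragraph preceding the proposition: $I(x\otimes y) = (Ix)\otimes y + (-1)^{\overline x}x\otimes(Iy)$. Applying this with $g_{\widetilde I}^{-1}=(\widetilde I\otimes 1)g^{-1}$ gives
\[
I\big(g_{\widetilde I}^{-1}\big) = (I\widetilde I\otimes 1)\,g^{-1} \;+\; (\widetilde I\otimes I)\,g^{-1}
\]
up to the Koszul sign, where I must be careful to insert the sign correctly. The crucial auxiliary fact needed here is that the canonical element $g^{-1}$ is itself $I$-invariant in the appropriate sense, or more precisely that $(\widetilde I\otimes I)g^{-1} = \pm(I\widetilde I'\otimes 1)g^{-1}$ where the transfer of $I$ across the tensor product onto the first factor is governed by anti-self-adjointness of $I$ with respect to $g$: the defining property $g(Ix,y)+(-1)^{\overline x}g(x,Iy)=0$ says exactly that $g^{-1}$ is annihilated by the total action of $I$, i.e. $(I\otimes 1)g^{-1} = -(1\otimes I)g^{-1}$ with the Koszul sign. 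Using this to move the second $I$ onto the first factor and combining, the two terms become $(I\widetilde I\otimes 1)g^{-1}$ and $(\widetilde I I\otimes 1)g^{-1}$, whose sum is $([I,\widetilde I]\otimes 1)g^{-1} = (1\otimes 1)g^{-1} = g^{-1}$ by Proposition \ref{prop:IItilde1}.

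The main obstacle I anticipate is purely bookkeeping: getting every Koszul sign right in the three steps — rewriting $g_{\widetilde I}^{-1}$ as $(\widetilde I\otimes 1)g^{-1}$ (which already uses the self-adjointness sign of $\widetilde I$ together with the parity-reversal $\pi$), applying the Leibniz rule with the correct sign $(-1)^{\overline{\widetilde Ix}}$ on the middle term, and transferring the second copy of $I$ across the tensor factor using the anti-self-adjointness identity. Since $I$ and $\widetilde I$ are both odd, and since the pairing $g$ is odd, the parities conspire, and I expect all the signs to cancel so that the clean identity $I(g_{\widetilde I}^{-1})=g^{-1}$ emerges; but this cancellation should be checked explicitly on a basis adapted to the block decomposition of Theorem \ref{thm:odd-anti-selfadj} if any doubt remains. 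Everything else is a direct consequence of $[I,\widetilde I]=1$.
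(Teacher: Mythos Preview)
Your proposal is correct and follows essentially the same approach as the paper: both arguments reduce the claim to the identity $[I,\widetilde I]=1$ by applying the Leibniz rule to $g_{\widetilde I}^{-1}$, using the anti-self-adjointness of $I$ to move one copy of $I$ across the two tensor factors, and recognizing the resulting supercommutator. The paper's proof is simply the dual formulation of yours---it evaluates everything on test functionals $\varphi,\psi$ and writes the one-line computation $g_{\widetilde I}^{-1}(I^{\ast}\varphi,\psi)+(-1)^{\overline\varphi}g_{\widetilde I}^{-1}(\varphi,I^{\ast}\psi)=g^{-1}([I^{\ast},\widetilde I^{\ast}]\varphi,\psi)=g^{-1}(\varphi,\psi)$---whereas you carry out the same manipulation on the element of $(\Pi A)^{\otimes 2}$; the sign bookkeeping you flag is real but routine, and your outline handles it correctly.
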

\begin{svmultproof}
\[
I(g_{\widetilde{I}}^{-1})=g_{\widetilde{I}}^{-1}(I^{\ast}\varphi,\psi)+(-1)^{\overline{\varphi}}g_{\widetilde{I}}^{-1}(\varphi,I^{\ast}\psi)=g^{-1}([I^{\ast},\widetilde{I}^{\ast}]\varphi,\psi)=g^{-1}(\varphi,\psi)
\]
\end{svmultproof}

Consider the partition functions of an oriented ribbon graph $G$
and the graph $G^{^{\prime}}=G/\{e^{\prime}\}$ obtained from $G$
by contracting the edge $e^{\prime}\in Edge_{r}(G)$, with its induced
orientation. The previous proposition shows that acting by $I$ on
the two-tensor associated with the edge $e^{\prime}$ gives 
\[
I(g_{\widetilde{I}}^{-1})_{e^{\prime}}=g_{e^{\prime}}^{-1},
\]
where $g_{e^{\prime}}^{-1}\in(\Pi A)^{\otimes\{f,f^{\prime}\}}$,
$e^{\prime}=(ff^{\prime})$, denotes the two-tensor inverse to $g$,
associated with the edge $e^{\prime}$. I claim that inserting $g_{e^{\prime}}^{-1}$
instead of $(g_{\widetilde{I}}^{-1})_{e^{\prime}}$ for the edge $e^{\prime}$
\emph{which is not a loop} gives $Z_{\widetilde{I}}^{(G/\{e^{\prime}\},or(G/\{e^{\prime}\})}$.
\begin{proposition}
\label{IPHI}The partition function $Z_{\widetilde{I}}^{G/\{e^{\prime}\},or(G/\{e^{\prime}\})}$,
for a regular edge $e^{\prime}$, is equal to the contraction of $\bigotimes_{v\in Vert(G)}\alpha_{v}$
with $\bigotimes_{e\in Edge(G)^{\prime}}h_{e}$ where $h_{e}=(g_{\widetilde{I}}^{-1})_{e}$
for $e\neq e^{\prime}$ and $h_{e^{\prime}}=g_{e^{\prime}}^{-1}$. 
\end{proposition}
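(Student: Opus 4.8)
The plan is to reduce the statement about $Z_{\widetilde{I}}^{G/\{e'\},or(G/\{e'\})}$ to a purely local computation at the edge $e'$, using the fact that contracting a regular edge $e'=(ff')$ joining distinct vertices $v_1$ and $v_2$ merges the two cyclic-order tensors $\alpha_{v_1}$ and $\alpha_{v_2}$ into a single $\alpha_{v_0}$ by composing the two multiplication maps along the pair of flags killed by $e'$. Concretely, I would first recall that the contraction defining $Z_{\widetilde{I}}^{G,or(G)}$ factors: the tensor $(g_{e'}^{-1})\in(\Pi A)^{\otimes\{f,f'\}}$, being the copairing dual to $g$, acts as the identity when one of its legs is fed into $\alpha_{v_1}$ at the slot $f$ and the other into $\alpha_{v_2}$ at the slot $f'$. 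Since $\alpha_n(\pi a_1,\dots,\pi a_n)$ is built from iterated algebra multiplication followed by $g$, inserting $g^{-1}$ at the flag $f$ of $v_1$ and at the flag $f'$ of $v_2$ glues the product $a_{\rho_1}\cdots a_{\rho_r}$ (read from $v_1$ starting after $f$) to the product computed at $v_2$, producing exactly the iterated product that defines $\alpha_{v_0}$ on $Flag(v_0)=Flag(v_1)\sqcup Flag(v_2)\setminus\{f,f'\}$ with its inherited cyclic order. This is the associativity/invariance identity $g(a_1\cdots a_{r},\,x)=\sum_\mu g(a_1\cdots a_r e^\mu)\,e_\mu(\cdots)$ unwound; it is essentially the statement that $A$ with its odd trace is a cyclic (Frobenius) structure, so gluing two cyclic tensors along the copairing yields the cyclic tensor of one higher arity.

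Second, I would check the sign/orientation bookkeeping. Here I need that the induced orientation $or(G/\{e'\})$, as spelled out in Section 2 (placing $v_0$ at the position of $v_1$ and selecting $\sigma_G(f_1)$, or in the odd-odd case placing $v_0$ at the lowest even-valence level), is compatible with the Koszul signs produced when the copairing $g^{-1}$ is contracted against $\alpha_{v_1}\otimes\alpha_{v_2}$. There are three cases by the parities $n(v_1),n(v_2)$: both odd (so $v_0$ has even valence and $\alpha_{v_0}=\alpha_{n(v_0)}$ is the anti-symmetric tensor, and the orientation datum is exactly the choice of starting flag $\sigma_G(f)$), both even (the order on even-valence vertices determines the sign, and removing two even vertices while inserting one even vertex at $v_1$'s slot accounts for it), and one odd one even. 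In each case the claim is that the Koszul sign of the copairing contraction matches the orientation sign prescribed in the induced-orientation recipe, so the numerical values agree including sign.

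Third, I would assemble: by Proposition \ref{IgI}, $I(g_{\widetilde{I}}^{-1})=g^{-1}$, so $I(g_{\widetilde{I}}^{-1})_{e'}=g_{e'}^{-1}$; substituting this into the contraction and invoking the local gluing identity above yields precisely $Z_{\widetilde{I}}^{G/\{e'\},or(G/\{e'\})}$, because all tensors $(g_{\widetilde{I}}^{-1})_e$ for $e\neq e'$ and all $\alpha_v$ for $v\notin\{v_1,v_2\}$ are untouched and transported verbatim to $G/\{e'\}$, while the two vertices $v_1,v_2$ and the edge $e'$ collapse to $v_0$ with the merged tensor $\alpha_{v_0}$. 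The main obstacle I anticipate is the sign analysis in the merging step: one must carefully track the Koszul signs hidden in the definition $\alpha_n(\pi a_1,\dots,\pi a_n)=(-1)^{\sum_{i}\sum_{j\le i}\bar a_j}g(a_1\cdots a_{n-1},a_n)$ when the copairing—itself an even symmetric tensor in $(\Pi A)^{\otimes 2}$—is passed through, and verify these agree with the induced orientation conventions of Section 2 in all parity cases; the algebraic content (associativity plus $g(ab,c)=g(a,bc)$) is routine once the signs are pinned down.
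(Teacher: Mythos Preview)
Your proposal is correct and follows essentially the same approach as the paper: reduce to the local contraction at the edge $e'$ and use the copairing identity $b=\sum_{\mu,\nu}(-1)^{\bar\nu}u_\mu\, g^{-1}(u^\mu,u^\nu)\,g(u_\nu,b)$ (with $b=v_{k+1}\cdots v_n$) to merge $\alpha_{v_1}$ and $\alpha_{v_2}$ into the tensor $\alpha_{v_0}$ at the new vertex. One small remark: your invocation of Proposition~\ref{IgI} in the third step is extraneous, since the hypothesis already stipulates $h_{e'}=g_{e'}^{-1}$; the paper's proof omits that and is also terser on the orientation bookkeeping than your case analysis, but the algebraic core is identical.
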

\begin{svmultproof}
The part of the contraction involving $g_{e^{\prime}}^{-1}$ is
\[
\sum_{\mu,\nu}(-1)^{\overline{\nu}}\alpha_{k+1}(v_{1}\ldots v_{k}u_{\mu})g^{-1}(u^{\mu},u^{\nu})\alpha_{n-k+1}(u_{\nu}v_{k+1}\ldots v_{n})
\]
where $\{u_{\mu}\}$ is a basis in $(\Pi A)$ and $\{u^{\mu}\}$ is
the dual basis, $u_{\mu}$ and $u_{\nu}$ represent the two flags
of the edge $e^{\prime}$, and 
\[
v_{1}\otimes\ldots\otimes v_{k}\otimes v_{k+1}\ldots\otimes v_{n}\in(\Pi A)^{\otimes Flag(v)}
\]
represents the flags corresponding to the new vertex of $G/\{e^{\prime}\}$
to which the edge $e^{\prime}$shrinks. Using the linear algebra identity
\[
b=\sum_{\mu}u_{\mu}u^{\mu}(b)=\sum_{\mu\nu}(-1)^{\overline{\nu}}u_{\mu}g^{-1}(u^{\mu},u^{\nu})g(u_{\nu},b).
\]
with $b=v_{k+1}\cdot\ldots\cdot v_{n}$ I get 
\[
\alpha_{n}(v_{1}\ldots v_{n})=\sum_{\mu,\nu}(-1)^{\overline{\nu}}\alpha_{k+1}(v_{1}\ldots v_{k}u_{\mu})g^{-1}(u^{\mu},u^{\nu})\alpha_{n-k+1}(u_{\nu}v_{k+1}\ldots v_{n})
\]
i.e. the tensor $\alpha_{v}$ associated with the new vertex. 
\end{svmultproof}

Next proposition shows that acting by $I^{\ast}$ on $\alpha_{n}$
gives zero because $I$ is a derivation of $A$, preserving $g$.
\begin{proposition}
\label{Ialph}
\[
I^{\ast}(\alpha_{n})=0
\]
\end{proposition}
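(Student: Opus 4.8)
The plan is to unwind the definition of $\alpha_n$ and use that $I$ is a derivation of $A$ which is anti-self-adjoint with respect to $g$, so that the dual operator $I^{\ast}$ acts as a derivation on $(\Pi A)^{\otimes n}$ and the contraction with $\alpha_n$ collapses. Concretely, $I^{\ast}(\alpha_n)$ evaluated on $\pi a_1 \otimes \cdots \otimes \pi a_n$ is, by the Leibniz rule for the dual action, a signed sum of terms $\alpha_n(\pi a_1 \otimes \cdots \otimes \pi(Ia_i) \otimes \cdots \otimes \pi a_n)$. Using the definition $\alpha_n(\pi a_1,\ldots,\pi a_n) = (-1)^{\ast} g(a_1 a_2 \cdots a_{n-1}, a_n)$, each such term produces, up to the Koszul sign, $g(a_1 \cdots (Ia_i) \cdots a_{n-1}, a_n)$ when $i \leq n-1$, and $g(a_1\cdots a_{n-1}, Ia_n)$ when $i=n$.

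First I would carefully record the sign bookkeeping: the sign in the definition of $\alpha_n$ is $(-1)^{\Sigma_{i=1}^{n-1}\Sigma_{j=1}^{i}\overline a_j}$, and since $I$ is odd, replacing $a_i$ by $Ia_i$ flips the parity $\overline{a_i} \mapsto \overline{a_i}+1$, which changes this sign, together with the sign $(-1)^{\overline{a_1}+\cdots+\overline{a_{i-1}}}$ coming from moving $I^{\ast}$ (or dually $I$) past the first $i-1$ factors. The claim is that these two sign contributions, combined with the invariance relation $g(ab,c)=g(a,bc)$ and the derivation property $I(bc) = (Ib)c + (-1)^{\overline b}b(Ic)$, assemble so that the whole sum is a telescoping sum: the derivation property turns $I(a_1 \cdots a_{n-1})$ into $\sum_{i=1}^{n-1} \pm a_1 \cdots (Ia_i) \cdots a_{n-1}$, and the anti-self-adjointness $g(Ix,y) + (-1)^{\overline x} g(x,Iy) = 0$ exchanges $g(I(a_1\cdots a_{n-1}), a_n)$ with $-(-1)^{\text{something}} g(a_1\cdots a_{n-1}, Ia_n)$, exactly cancelling the $i=n$ term. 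Hence the total vanishes.

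The main obstacle will be verifying that all the Koszul signs match up precisely — in particular checking that the sign with which $\pi(Ia_i)$ enters $\alpha_n$ (after accounting for the parity shift in the defining sign and the sign from the Leibniz action of $I^{\ast}$) agrees with the sign with which $a_1\cdots(Ia_i)\cdots a_{n-1}$ appears when one expands $I$ as a derivation on the product inside $g$. I would handle this by first treating the edge case $n=2$, where $\alpha_2(\pi a, \pi b) = (-1)^{\overline a} g(a,b)$ and the statement is literally the anti-self-adjointness of $I$, and then arguing by induction on $n$ or, more cleanly, by reducing to $n=2$ via the linear-algebra identity $b = \sum_{\mu\nu}(-1)^{\overline\nu} u_\mu g^{-1}(u^\mu,u^\nu)g(u_\nu,b)$ already used in the proof of Proposition \ref{IPHI}, which lets one split off the last factor. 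Once the signs in the $n=2$ case and in the "splitting" identity are fixed, the general case follows formally because $I^{\ast}$ is a derivation and both $g$ and the multiplication are $I$-invariant.

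Alternatively, and perhaps most transparently, I would phrase the whole computation invariantly: $\alpha_n$ is obtained from the cyclically invariant tensor $g(\mu^{(n-1)}(-), -)$ where $\mu^{(n-1)}$ is iterated multiplication, $I$ acts as a derivation on $\mu^{(n-1)}$ (since it is a derivation of $A$) and anti-self-adjointly on $g$, so $I^{\ast}$ annihilates their composite; the parity-reversal functor $\Pi$ only introduces the shift reflected in the defining sign of $\alpha_n$, and one checks that this shift is compatible with the $\Pi$-twisted action. This removes the need to chase indices and isolates the single nontrivial input, namely that "derivation of the algebra" plus "anti-self-adjoint for $g$" is exactly the condition for $I^{\ast}$ to kill the invariant pairing tensor.
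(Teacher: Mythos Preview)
Your proposal is correct and takes essentially the same approach as the paper: expand $I^{\ast}(\alpha_n)$ by the Leibniz rule, use that $I$ is a derivation of the product, and that $I$ is anti-self-adjoint with respect to $g$. The paper packages this slightly more compactly by working with the odd trace functional $l$ (defined via $l(ab)=g(a,b)$) and writing the whole sum at once as $l(I(a_1\cdots a_n))=0$, rather than separating the last slot and invoking anti-self-adjointness explicitly; your version with the split at $i=n$ is equivalent, and the sign-chasing you anticipate is exactly the content of the Koszul sign $\varepsilon_i=\sum_{j<i}\overline{a}_j$ in the paper's one-line identity.
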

\begin{svmultproof}
It follows from 
\begin{equation}
\sum_{i=1}^{n}(-1)^{\varepsilon_{i}}l(a_{1}\cdot\ldots\cdot(Ia_{i})\cdot\ldots\cdot a_{n})=l(I(a_{1}\cdot\ldots\cdot a_{n}))=0\label{lia}
\end{equation}
where $\varepsilon_{i}=\sum_{j=1}^{i-1}\overline{a}_{j}$, $n\geq2$
and $l$ is the odd linear functional $l(ab)=g(a,b)$ defined on the
image of the multiplication map $A^{\otimes2}\rightarrow A$.
\end{svmultproof}

\section{The boundary of the cochain $Z_{\widetilde{I}}$.\label{sec:boundaryYes}}

Combining the three propositions we get the following theorem.
\begin{theorem}
The boundary of the cochain $Z_{\widetilde{I}}$ (\ref{ZI}) is given
by the sum over graphs with loops, 
\[
dZ_{\widetilde{I}}=\sum_{[G],Loop(G)\neq\varnothing}(G,or(G))Z_{\widetilde{I}}^{\text{loop},(G,or(G))}
\]
each such graph with loops contributing 
\[
Z_{\widetilde{I}}^{\text{loop},(G,or(G))}=-\left\langle (\bigotimes_{e\in Edge_{r}(G)}(g_{\widetilde{I}}^{-1})_{e})I(\bigotimes_{l\in Loop(G)}(g_{\widetilde{I}}^{-1})_{l}),(\bigotimes_{v\in Vert(G)}\alpha_{v})\right\rangle .
\]
\end{theorem}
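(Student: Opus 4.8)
The plan is to compute $d Z_{\widetilde{I}}$ directly from the definition of the differential $d$ on the dual complex $(C^{\ast},d)$ and to match it term-by-term with the stated expression, using Propositions \ref{IgI}, \ref{IPHI} and \ref{Ialph}. Since $d$ is dual to $D$, which sums over contractions $G \mapsto G/\{e\}$ along regular edges, the value of $d Z_{\widetilde{I}}$ on an oriented graph $(G,or(G))$ is obtained by summing, over all ways of \emph{expanding} a vertex of $G$ into two vertices joined by a new regular edge $e'$, the partition functions $Z_{\widetilde{I}}^{(G',or(G'))}$ of the resulting graphs $G'$ with $G'/\{e'\}=G$. So first I would write $\langle d Z_{\widetilde{I}}, (G,or(G))\rangle = \sum_{G',\, e'} Z_{\widetilde{I}}^{(G',or(G'))}$ with appropriate signs from the induced orientation, for every $G$ that may or may not have loops.

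The key observation is that Proposition \ref{IPHI} runs this backwards: inserting the \emph{bare} propagator $g_{e'}^{-1}$ on a regular edge $e'$ of $G'$, instead of $(g_{\widetilde I}^{-1})_{e'}$, and contracting with the same $\bigotimes_v \alpha_v$, reproduces exactly $Z_{\widetilde{I}}^{(G'/\{e'\},or(G'/\{e'\}))}$. Hence summing over all regular edges $e'$ of a \emph{fixed} graph $H$ and replacing $(g_{\widetilde I}^{-1})_{e'}$ by $g_{e'}^{-1}$ one at a time produces precisely $\langle d Z_{\widetilde{I}}, \cdot\rangle$ evaluated on all the contractions $H/\{e'\}$. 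Combining this with Proposition \ref{IgI}, which says $I$ turns $(g_{\widetilde I}^{-1})_{e}$ into $g_e^{-1}$, the operator $I$ acting as a derivation on $\bigotimes_{e} (g_{\widetilde I}^{-1})_e$ generates exactly the sum over all single-edge replacements $(g_{\widetilde I}^{-1})_{e}\rightsquigarrow g_e^{-1}$. Therefore, for a fixed graph $H$,
\[
\Bigl\langle I\bigl(\bigotimes_{e\in Edge(H)}(g_{\widetilde I}^{-1})_e\bigr),\ \bigotimes_{v}\alpha_v\Bigr\rangle
= \sum_{e'\in Edge(H)} \Bigl\langle \bigotimes_{e\neq e'}(g_{\widetilde I}^{-1})_e\otimes g_{e'}^{-1},\ \bigotimes_v\alpha_v\Bigr\rangle,
\]
and by Proposition \ref{IPHI} the terms indexed by regular edges $e'$ sum to $-\langle d Z_{\widetilde I},(H,or(H))\rangle$ (the sign being the one produced by matching the induced orientation convention against the Koszul signs coming from moving $I$ past the tensors), while the terms indexed by loops $l\in Loop(H)$ survive with no graph-contraction interpretation. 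Moving $I$ entirely onto the loop propagators is justified by Proposition \ref{Ialph}: since $I^{\ast}(\alpha_n)=0$ for every $n$, the full contraction $\langle I(\bigotimes_e (g_{\widetilde I}^{-1})_e),\bigotimes_v\alpha_v\rangle$ vanishes (we may integrate $I$ by parts onto the $\alpha_v$'s, where it acts as zero); rearranging this identity isolates the loop contributions as $-\sum_{l\in Loop(H)}(\cdots)$ equals $-d Z_{\widetilde I}$, i.e. $d Z_{\widetilde I} = \sum_{[H],\,Loop(H)\neq\varnothing} (H,or(H))\, Z_{\widetilde I}^{\mathrm{loop},(H,or(H))}$ with $Z_{\widetilde I}^{\mathrm{loop}}$ as displayed.

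The main obstacle will be the careful bookkeeping of signs: one must check that (i) the Koszul signs incurred when the derivation $I$ is commuted past the tensor factors $(g_{\widetilde I}^{-1})_e$ agree with (ii) the signs appearing in the induced orientation on $G'/\{e'\}$ from Section 2 (the two cases — edge joining two even-valent vertices versus two odd-valent vertices — must be handled separately), and that (iii) the overall $-1$ in the statement is consistent with the sign in $I^{\ast}(\alpha_n)=0$ used to integrate by parts. I would organize this by first doing the sign-tracking for a single regular edge expansion (reconciling Proposition \ref{IPHI}'s sign with the orientation rules), then observing that the derivation property of $I$ makes the full sum over edges automatic, and finally checking that loops contribute a term of the claimed form because for a loop $l$ the two flags of $l$ attach to the \emph{same} vertex, so $g_l^{-1}$ contracts $\alpha_v$ with itself rather than gluing two $\alpha$'s — there is simply no contracted graph $H/\{l\}$ to absorb it, so it is left over precisely as written. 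Once the single-edge sign is pinned down, the rest is formal manipulation with the Leibniz rule and the two vanishing lemmas.
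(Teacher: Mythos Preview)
Your proposal is correct and follows essentially the same route as the paper: expand $dZ_{\widetilde I}$ as $\sum_{[G]}(G,or(G))\sum_{e\in Edge_r(G)}Z_{\widetilde I}^{(G/\{e\},or(G/\{e\}))}$, then for each fixed $G$ use Proposition~\ref{Ialph} to write $0=\langle I(\bigotimes_e(g_{\widetilde I}^{-1})_e),\bigotimes_v\alpha_v\rangle$, apply the Leibniz rule together with Propositions~\ref{IgI} and~\ref{IPHI} to identify the regular-edge terms with $\sum_{e\in Edge_r(G)}Z_{\widetilde I}^{(G/\{e\},\ldots)}$, and read off the leftover loop terms as $Z_{\widetilde I}^{\mathrm{loop}}$. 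The paper's proof is exactly this chain of equalities, stated tersely and without the explicit sign discussion you anticipate; your extra care about matching Koszul signs to the induced-orientation conventions is reasonable but the paper simply absorbs it into citing the propositions.
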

\begin{svmultproof}
The boundary of $Z_{\widetilde{I}}$ 
\[
dZ_{\widetilde{I}}=\sum_{[G^{\prime}]}Z_{\widetilde{I}}^{(G^{\prime},or(G^{\prime}))}d(G^{\prime},or(G^{\prime}))=\sum_{[G]}(G,or(G))\sum_{e\in Edge_{r}(G)}Z_{\widetilde{I}}^{(G/\{e\},\text{ }or(G/\{e\}))}
\]
For any ribbon graph $G$ we have

\begin{multline*}
0\overset{(\text{Prop. }\ref{Ialph})}{=}\left(\bigotimes_{e\in Edge(G)}(g_{\widetilde{I}}^{-1})_{e}\right)I^{\ast}(\bigotimes_{v\in Vert(G)}\alpha_{v})=\\
=I\left(\bigotimes_{e\in Edge(G)}(g_{\widetilde{I}}^{-1})_{e}\right)(\bigotimes_{v\in Vert(G)}\alpha_{v})\overset{(\text{Prop. }\ref{IgI},\ref{IPHI})}{=}\\
=\left(\sum_{e\in Edge_{r}(G)}Z_{\widetilde{I}}^{(G/\{e\},\text{ }or(G/\{e\}))}\right)+Z_{\widetilde{I}}^{\text{loop},(G,or(G))}
\end{multline*}
\end{svmultproof}

\begin{corollary}
The cochain $\sum_{[G]}Z_{\widetilde{I}}^{(G,or(G))}(G,or(G))$ is
not, in general, closed under the differential of the ribbon graph
complex. 
\end{corollary}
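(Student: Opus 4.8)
The plan is to deduce the Corollary directly from the preceding theorem, which expresses $dZ_{\widetilde I}$ as the explicit sum $\sum_{[G],\,Loop(G)\neq\varnothing}(G,or(G))\,Z_{\widetilde I}^{\mathrm{loop},(G,or(G))}$ over ribbon graphs carrying at least one loop. To show that this boundary does not vanish in general it suffices to exhibit a single choice of the input data $(A,g,I)$ and a single ribbon graph $G$ with $Loop(G)\neq\varnothing$ for which the coefficient $Z_{\widetilde I}^{\mathrm{loop},(G,or(G))}$ is nonzero and is not killed by cancellation against other terms in the sum. Thus the proof of the Corollary is really the construction of one such example, which is the content of Proposition \ref{counterex}.

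For the example I would take $A=\langle 1,\xi\rangle/(\xi^{2}=1)$ with $\bar\xi=1$, the simplest noncommutative $\mathbb{Z}/2\mathbb{Z}$-graded algebra, equipped with its odd trace, and an odd derivation $I$; one first checks, using the classification of Theorem \ref{thm:odd-anti-selfadj} (or the criterion of Proposition \ref{prop:nondegenquadr}), that $I$ is homotopy invertible, so that $\widetilde I$ is available by Proposition \ref{prop:IItilde1}, and then computes $\widetilde I$, $\widetilde I^{\ast}$ and the propagators $g^{-1}$, $g_{\widetilde I}^{-1}$ explicitly. For $G$ I would take the smallest genus-$1$ ribbon graph with two numbered punctures possessing a loop, namely the graph of Figure \ref{fig1}; evaluating the contraction $Z_{\widetilde I}^{\mathrm{loop},(G,or(G))}=-\langle(\bigotimes_{e\in Edge_{r}(G)}(g_{\widetilde I}^{-1})_{e})\,I(\bigotimes_{l\in Loop(G)}(g_{\widetilde I}^{-1})_{l}),\,\bigotimes_{v}\alpha_{v}\rangle$ on this small graph then reduces to a finite computation in $A$.

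The only place where something could go wrong is in checking that the loop contribution genuinely survives: one must make sure that the parities of the inserted dual-basis elements together with the cyclic, resp. anti-cyclic, symmetry of the tensors $\alpha_{v}$ do not force the contraction to vanish, and that the graph chosen is not identified under $\mathrm{Aut}(G)$ with another term contributing the opposite sign. For the rigid graph of Figure \ref{fig1} neither obstruction occurs and $Z_{\widetilde I}^{\mathrm{loop},(G,or(G))}\neq 0$, so $Z_{\widetilde I}=\sum_{[G]}Z_{\widetilde I}^{(G,or(G))}(G,or(G))$ fails to be closed; note that this is consistent with $d^{2}=0$, since $dZ_{\widetilde I}$ is merely a nonzero cochain and is not claimed to be a coboundary. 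Moreover the same computation shows that the failure persists even if one imposes the extra hypothesis $I^{2}=0$, which is the counterexample to Theorem 1.3 of \cite{K2} announced in the introduction.
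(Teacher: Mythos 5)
Your proposal follows the paper's own route: the corollary is deduced from the preceding theorem together with the explicit counterexample of Proposition \ref{counterex} (the algebra $\left\langle 1,\xi\right\rangle/(\xi^{2}=1)$ with $\bar{\xi}=1$ and the genus-one, two-puncture graph of Fig.~\ref{fig1}), and your check that the term survives is the same in substance, since you evaluate the loop contribution $Z_{\widetilde{I}}^{\text{loop},(G,or(G))}$ directly while the paper evaluates the equal quantity $Z_{\widetilde{I}}(\partial G)$ by listing the edge contractions. One small correction: that graph is not rigid --- it carries a $\mathbb{Z}/2\mathbb{Z}$ automorphism --- but, as the paper notes, this automorphism preserves the orientation, so the generator $(G,or(G))$ is nonzero and your conclusion stands.
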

This gives a counterexample to the theorem 1.3 from \cite{K2}. Note
that by Poincare duality the rational homology of $\mathcal{M}_{g,n}$
coincide with the cohomology of one-point compactification which are
computed by the cohomology of the ribbon graph complex.
\begin{proposition}
\label{counterex}Consider the simplest noncommutative $\mathbb{Z}/2\mathbb{Z}-$graded
algebra $\left\langle 1,\xi\right\rangle /\xi^{2}=1$,$\bar{\xi}=1$,
equipped with the odd scalar product $g(1,\xi)=1$, the derivation
$I(\xi)=1$, $I^{2}=0$, and the homotopy inverse $\tilde{I}(1)=\xi$.
Then for the graph $G$ with three vertices and five edges, one vertex
of valence 4 with 2 opposite flags forming a loop, and two more vertices
of valence 3, see Fig.\ref{fig1}, so that $G$ represents a surface
of genus 1 with 2 punctures, $Z_{\widetilde{I}}(\partial G)\neq0$.
\end{proposition}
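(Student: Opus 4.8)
The plan is to make the abstract boundary formula from the preceding theorem explicit for this very small algebra and this very small graph, and simply check that the resulting number is nonzero. By the theorem just proved, the only contribution to $Z_{\widetilde I}(\partial G)$ comes from the loop term
\[
Z_{\widetilde{I}}^{\text{loop},(G,or(G))}=-\left\langle (\bigotimes_{e\in Edge_{r}(G)}(g_{\widetilde{I}}^{-1})_{e})\,I(\bigotimes_{l\in Loop(G)}(g_{\widetilde{I}}^{-1})_{l}),\ \bigotimes_{v\in Vert(G)}\alpha_{v}\right\rangle,
\]
so the whole matter reduces to evaluating one explicit tensor contraction. First I would write down all the ingredients for $A=\langle 1,\xi\rangle/(\xi^2=1)$: the scalar product $g$ has $g(1,\xi)=g(\xi,1)=1$, $g(1,1)=g(\xi,\xi)=0$ (these vanish for parity reasons since $g$ is odd), so in the basis $\{1,\xi\}$ of $\Pi A$ the propagator $g^{-1}$ pairs $\pi 1$ with $\pi\xi$. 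With $\widetilde I(1)=\xi$, $\widetilde I(\xi)=0$ one gets $\widetilde I^{*}$ and hence $g_{\widetilde I}^{-1}=g^{-1}(\widetilde I^{*}\varphi,\psi)\in(\Pi A)^{\otimes 2}$, which I would express concretely as a combination of $\pi\xi\otimes\pi\xi$ (and possibly $\pi 1\otimes \pi\xi$-type terms — here one must be careful with the Koszul signs coming from $\bar\xi=1$). Finally $I(g_{\widetilde I}^{-1})=g^{-1}$ by Proposition \ref{IgI}, so the loop tensor, after applying $I$ to the single loop factor, is just $g^{-1}$ on that loop's two flags.

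Next I would assemble the vertex tensors. The graph $G$ has one $4$-valent vertex $v_0$ (two of whose flags, opposite in the cyclic order, form the loop $l$) and two $3$-valent vertices $v_1,v_2$. For the odd-valent vertices the tensor is the cyclically symmetric $\alpha_3(\pi a_1,\pi a_2,\pi a_3)=(-1)^{\cdots}g(a_1a_2,a_3)$, and for the $4$-valent vertex it is the cyclically antisymmetric $\alpha_4$ with the orientation $or(v_0)$ fixed by a choice of distinguished flag. Then I would contract: the two non-loop flags of $v_0$ get paired via $g_{\widetilde I}^{-1}$ to flags of $v_1$ and $v_2$, the remaining flags of $v_1,v_2$ are paired via $g_{\widetilde I}^{-1}$, the two loop-flags of $v_0$ get the tensor $g^{-1}$ (the result of $I$ acting on the loop factor), and everything is summed over the basis $\{1,\xi\}$ on each flag. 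This is a finite sum over $2^{5}=32$ (really far fewer, once the $g$-pairings force most basis choices) terms; using $\xi^{2}=1$, the multiplication inside each $\alpha$ collapses to $1$ or $\xi$, and $g$ then picks out the nonzero pairing. I would track the Koszul signs and the single overall minus sign, arriving at an explicit rational (in fact integer) value.

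The main obstacle, and essentially the only real work, is the bookkeeping of signs: the parity of $\xi$ is odd, the scalar product and the operators $I,\widetilde I$ are odd, the $\Pi$-shift flips parities, $\alpha_4$ is genuinely odd as a tensor so the order in which the vertex tensors are multiplied matters, and the orientation $or(G)$ enters through the distinguished flag at $v_0$ and the ordering of even-valent vertices (here just $v_0$ alone). I would fix one explicit orientation at the outset, compute $Z_{\widetilde I}^{\text{loop},(G,or(G))}$ for it, and remark that changing the orientation changes the sign of both $Z_{\widetilde I}^{\text{loop},(G,or(G))}$ and of $(G,or(G))$, so the class $Z_{\widetilde I}(\partial G)\in(C^{*},d)$ is well defined and, by the computation, nonzero. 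A useful sanity check along the way: by construction only the single loop of $G$ can contribute, and $G$ has exactly one loop, so there is genuinely only one term of the sum in the theorem to evaluate; if the arithmetic produced $0$ I would re-examine the sign in $\widetilde I^{*}$ and in the definition of $g_{\widetilde I}^{-1}$, since those are the places the Koszul conventions bite hardest.
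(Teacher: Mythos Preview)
Your approach is valid but takes a different route from the paper. You invoke the theorem of Section~5 to rewrite $Z_{\widetilde I}(\partial G)=(dZ_{\widetilde I})(G)$ as the single loop term $Z_{\widetilde I}^{\text{loop},(G,or(G))}$ and then propose to evaluate that contraction directly on the five-edge graph $G$. The paper does not use the loop formula at all: it simply computes $\partial G$ in the ribbon graph complex and evaluates $Z_{\widetilde I}$ on each of the contracted four-edge graphs. Contracting either of the two edges joining the two trivalent vertices yields a graph with two $4$-valent vertices, on which $Z_{\widetilde I}$ vanishes; the remaining two regular edges are exchanged by a $\mathbb{Z}/2\mathbb{Z}$ symmetry of $G$ preserving the orientation, so their contractions contribute equally, and that common contribution is the value of $Z_{\widetilde I}$ on a graph with vertices of valence $3$ and $5$, which is nonzero. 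Your route has the virtue of illustrating that the loop formula genuinely produces something nonzero, but the paper's route is arithmetically lighter: it works on smaller graphs, and replaces a full $2^{5}$-term contraction (with all its Koszul bookkeeping) by a symmetry observation plus a vanishing argument.
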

\begin{figure}
\begin{tikzpicture} \draw (1,0) arc (0:313.5:1) ; \draw (1,0) arc (0:-43:1) ;  \draw (-150:1) ..controls +(0.5,1) and +(0.5,-1).. (-30:1) ;  
\draw[<-,densely dotted,>=stealth] (-27.5:0.89) arc(120:480:0.12) ; \draw[<-,densely dotted,>=stealth] (-150:1.12) arc(-150:210:0.12) ; \draw[<-,densely dotted,>=stealth] (91:0.89) arc(-90:270:0.12) ; \draw (90:1) ..controls +(0,0.5) and +(70:0.5).. (70:1.05) ; \draw (90:1) ..controls +(0,-0.3) and +(250:0.3).. (70:0.95) ;
\end{tikzpicture}  \caption{$G$: $Z_{\widetilde{I}}(\partial G)\neq0$}    \label{fig1}
\end{figure}
\begin{svmultproof}
The contraction of any of two edges connecting the two vertices of
valence three gives the graph with two vertices of valence 4 on which
$Z_{\widetilde{I}}$ is zero. The graph $G$ has the $\mathbb{Z}/2\mathbb{Z}$
symmetry preserving the orientation, and the contraction of any of
the two remaining edges leads to the same contribution to $\partial G$
given by the graph with two vertices of valence 3 and 5, on which
$Z_{\widetilde{I}}$ is nonzero. 
\end{svmultproof}

\section{Compactification and the \emph{stable} ribbon graph complex.\label{sec:Comp-and-stable-ribbon}}

The graph with loops give non-zero terms in the boundary of the cochain
$Z_{\widetilde{I}}$ (\ref{ZI}) since loops are not allowed to contract
in the ribbon graph complex. There is a more general complex of \emph{stable}
ribbon graph, where loops are allowed to contract. It was proven in
\cite{B1} that the stable ribbon graph complex is isomorphic to the
complex underlying the Feynman transform of certain twisted modular
operad of associative algebras with scalar product. A corollary to
this theorem is the construction of the \emph{closed} cochain $\widehat{Z}_{\widetilde{I}}$
described in the sections \ref{sec:Comp-and-stable-ribbon}-\ref{sec:boundary-of-Zhat}.
In particular the weights $\alpha_{\sigma_{v},\gamma_{v}}$ \eqref{laaa}
associated to vertices are given by the action of the corresponding
elements of the twisted modular operad. 
\begin{definition}
A \emph{stable} ribbon graph $\widehat{G}$ is a data $(Flag(\widehat{G}),\sigma,\eta,\lambda,\{\gamma_{v}\})$,
where $Flag(\widehat{G})$ is a finite set, whose elements are called
flags, $\lambda$ is a partition on the set $Flag(\widehat{G})$,
$\sigma$ is a permutation from $Aut(Flag(\widehat{G}))$, stabilizing
$\lambda$, $\eta$ is a fixed-point free involution acting on $Flag(\widehat{G})$
, and $\gamma_{v}\in$ $\mathbb{Z}_{\geq0}$ is a nonnegative integer
attached to every cluster $v$ of $\lambda$. The clusters of the
partition $\lambda$ are the vertices of the \emph{stable} ribbon
graph $\widehat{G}$. The edges are the orbits of the involution $\eta$.
In particular to any vertex $v$ corresponds the integer $\gamma_{v}$
and a permutation $\sigma_{v}$ on the subset of flags $Flag(v)$
from the cluster $v$, so that $\sigma=\Pi_{v}\sigma_{v}$. Let $Cycle(v)$
denotes the set of cycles of the permutation $\sigma_{v}$ attached
to a vertex $v\in Vert(\widehat{G})$. The stable ribbon graph must
satisfy the stability condition for any $v$
\[
2(2\gamma_{v}+|Cycle(v)|-2)+\left|Flag(v)\right|>0.
\]
\end{definition}
Denote $b_{v}=|Cycle(v)|$ .

Given a stable ribbon graph $\hat{G}$ and a metric on $\hat{G}$
one can construct by the standard procedure a punctured Riemann surface
$S_{\widehat{G}}$, which will have singularities in general. Namely
replace every edge by oriented open strip $[0,l]\times]-i\infty,+i\infty[$
and glue them for each cyclically ordered subset according to the
cyclic order. In this way several punctured Riemann surfaces are obtained.
Then the points on these surfaces corresponding to different cyclically
ordered subsets associated with the given vertex of $\hat{G}$ should
be identified for every vertex of the graph $\hat{G}$. The one-dimensional
CW-complex $|\hat{G}|$ is naturally realized as a subset of $S_{\hat{G}}$.
It is also natural to glue in at each vertex with $2\gamma_{v}+b_{v}-2>0$
a topological surface of genus $\gamma_{v}$, which replaces the singular
point, obtained by identification of the $b_{v}$ points, by $b_{v}$
double points. 

\subsection{Contraction of edges on the stable ribbon graphs.}

In order to describe the action of the differential on the stable
ribbon graph $\widehat{G}$ one needs to describe the result of the
contraction of an arbitrary edge $\widehat{G}/\{e\}$. The contraction
of edges in the stable ribbon graphs is described combinatorially
via compositions and contractions on permutations, see (\cite{B1}),
representing the corresponding geometric operations on $S_{\widehat{G}}$.
For the reader convenience it is rephrased here. For an edge ending
at two vertices with $\gamma_{v}=0$, $b_{v}=1$ this is the standard
contraction of edge on ribbon graphs.

Let the permutation $\sigma^{\widehat{G}}$ is identified with multi-cyclic
order on $Flag(\widehat{G})$, i.e. the collection of cyclic orders
on the orbits of$\sigma$. Then the result of the contraction of an
edge $e=(ff^{\prime})$, $f,f^{\prime}\in Flag(\widehat{G})$ is the
stable ribbon graph $\widehat{G}/\{e\}$ such that 
\begin{eqnarray*}
Flag(\widehat{G}/\{e\}) & = & Flag(\widehat{G})\backslash\{f,f^{\prime}\},\\
\sigma^{\widehat{G}/\{e\}} & = & \left((ff^{\prime})\circ\sigma^{\widehat{G}}\right)|_{Flag(\widehat{G})\backslash\{f,f^{\prime}\}},
\end{eqnarray*}
i.e. $\sigma^{\widehat{G}/\{e\}}$ is the multi-cyclic order induced
on $Flag(\widehat{G})\backslash\{f,f^{\prime}\}$ by the multi-cyclic
order on $Flag(\widehat{G})$ given by the product $(ff^{\prime})\circ\sigma^{\widehat{G}}$,
\[
\eta^{\widehat{G}/\{e\}}=\eta^{\widehat{G}}|_{Flag(\widehat{G})\backslash\{f,f^{\prime}\}}
\]
i.e. the involution $\eta^{\widehat{G}/\{e\}}$ is the restriction
of the involution $\eta^{\widehat{G}}$ on the subset $Flag(\widehat{G})\backslash\{f,f^{\prime}\}$.
If an edge $e=(ff^{\prime})$, $f,f^{\prime}\in Flag(\widehat{G})$
is not a loop, i.e. $f,f^{\prime}$are from two distinguished clusters
$v$ and $v^{\prime}$ of $\lambda$, then the clusters $v$ and $v^{\prime}$
collide to the new cluster having $\gamma=\gamma_{v}+\gamma_{v^{\prime}}$.
If an edge $e$ is a loop then no vertices must collide, the partition
$\lambda^{\widehat{G}/\{e\}}$ on $Flag(\widehat{G})\backslash\{f,f^{\prime}\}$
is induced from $\lambda^{\widehat{G}}$ , and if the flags $f,f^{\prime}$
are from the same cycle of $\sigma_{v}^{\widehat{G}}$ then $\gamma_{v}^{\widehat{G}/\{e\}}=\gamma_{v}^{\widehat{G}}$,
otherwise if they are from different cycles of $\sigma_{v}^{\widehat{G}}$
then $\gamma_{v}^{\widehat{G}/\{e\}}=\gamma_{v}^{\widehat{G}}$. Lastly
if $f,f^{\prime}$ are neighbors in a cycle of $\sigma_{v}^{\widehat{G}}$,
i.e. say $\sigma_{v}^{\widehat{G}}(f)=f^{\prime}$, then, by definition,
$\widehat{G}/\{e\}=\varnothing$ , so that such loop do not contribute
to the boundary operator of the stable ribbon graph complex. This
exception is dictated by the relation of the stable ribbon graphs
with the combinatorial compactification of the moduli spaces and leads
to the condition on $A$, see section \ref{secanomaly}.

\subsection{The differential on the stable ribbon graph complex.\label{secdiff}}
\begin{definition}
The stable ribbon graph complex is the graded vector space generated
by equivalence classes of pairs $(\widehat{G},or(\widehat{G}))$,
where $\widehat{G}$ is a stable ribbon graph, $or(\widehat{G})$
is an orientation on the vector space 
\[
\bigotimes_{v\in Vert(\widehat{G})}(k^{Flag(v)}\oplus k^{Cycle(v)})
\]
and the relation $(\widehat{G},-or(\widehat{G})))=-(\widehat{G},or(\widehat{G}))$
is imposed. The differential is
\[
D(\widehat{G},or(\widehat{G}))=\sum_{\left[e\right]\in Edge(\widehat{G})}(\widehat{G}/\{e\},\text{induced\thinspace}\,\text{orientation})
\]
\end{definition}
The sum is over all edges of $\hat{G}$. Using the multi-cyclic order on $Flag(\widehat{G})$
, a choice of orientation is fixed by a choice of a flag from every
cycle of\emph{\ }$\sigma$ of \emph{even }length and a choice of
order on the total set of such cycles. The induced orientation on
$\widehat{G}/\{e\}$ is analogous to the case of usual ribbon graphs,
see (\cite{B1}). 

The differential in the dual complex: 
\[
d(\widehat{G}',or(\widehat{G}'))=\sum_{\widehat{G},\left[e\right]\in Edge(\widehat{G}),\,\widehat{G}/\{e\}=\widehat{G}'}(\widehat{G},or(\widehat{G}))
\]
is the sum over all stable graphs $\widehat{G}$ from which $\widehat{G}'$
can be obtained by the contraction of an arbitrary edge, equipped
with the orientation $or(\widehat{G})$ inducing the given orientation
$or(\widehat{G}')$.

\section{Weights on stable ribbon graphs.}

Let us start by constructing the tensor $\alpha_{\sigma_{v},\gamma_{v}}\in\limfunc{Hom}((\Pi A)^{\otimes Flag(v)},k)$
for any given vertex $v\in Vert(\widehat{G})$ with permutation $\sigma_{v}\in Aut(Flag(v))$
and integer $\gamma_{v}$ attached, together with a choice of orientation
$or(v)$ on $k^{Flag(v)}\oplus k^{Cycle(v)}$. Let $\sigma_{v}=(\rho_{1}\ldots\rho_{r})\ldots(\tau_{1}\ldots\tau_{t})$
be a representation of $\sigma_{v}$ compatible with $or(v)$, in
the sense that the order $\rho_{1}<\ldots<\tau_{t}$ on flags together
with the order $(\rho_{1}\ldots\rho_{r})<\ldots<(\tau_{1}\ldots\tau_{t})$
on cycles, is compatible with the choice of orientation on $k^{Flag(v)}\oplus k^{Cycle(v)}$.
\begin{definition}
\label{def:alphamulti}Put 
\begin{multline}
\alpha_{\sigma_{v},\gamma_{v}}(\pi a_{\rho_{1}}\otimes\ldots\otimes\pi a_{\tau_{t}})=\\
g\left((\sum_{\mu_{_{1}},\ldots,\mu_{b_{v}-1}}(-1)^{\epsilon}e^{\mu_{1}}a_{\rho_{1}}\ldots a_{\rho_{r}}e_{\mu_{1}}e^{\mu_{2}}\ldots e_{\mu_{b_{v}-1}}a_{\tau_{1}}\ldots a_{\tau_{t}}),\prod_{i=1}^{\gamma_{v}}(\sum_{\xi,\zeta}(-1)^{\overline{e^{\xi}}\overline{e^{\zeta}}}e^{\xi}e^{\zeta}e_{\xi}e_{\zeta})\right)\label{laaa}
\end{multline}
where $\{e_{\mu}\}$,$\{e^{\mu}\}$ is a pair of dual bases in $A$,
$g(e^{\mu},e_{\nu})=\delta_{\nu}^{\mu}$ , $\epsilon$ is the Koszul
sign taking into account the passing of all $\pi$'s to the left and
then putting $a_{i}$ inside the expression $(-1)^{\Sigma\overline{e^{\mu_{i}}}}e^{\mu_{1}}e_{\mu_{1}}\ldots e^{\mu_{b_{v}-1}}e_{\mu_{b_{v}-1}}$
. 
\end{definition}
\begin{proposition}
For any $a,b\in A$ 
\begin{equation}
\sum_{\mu}(-1)^{\overline{e^{\mu}}(\overline{a}+\overline{b}+1)}e^{\mu}abe_{\mu}=(-1)^{\overline{a}\overline{b}}\sum_{\nu}(-1)^{\overline{e^{\nu}}(\overline{a}+\overline{b}+1)}e^{\nu}bae_{\nu}.\label{eabe}
\end{equation}
For any $a,b\in A$, 
\begin{equation}
\sum_{\mu}(-1)^{\overline{e^{\mu}}(\overline{a}+1)}e^{\mu}ae_{\mu}b=(-1)^{(\overline{a}+1)\overline{b}}\sum_{\nu}(-1)^{\overline{e^{\nu}}(\overline{a}+1)}be^{\nu}ae_{\nu}.\label{eaeb}
\end{equation}
\end{proposition}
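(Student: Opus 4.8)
The plan is to verify both identities by writing out the sums over a dual pair of bases and moving the "middle" elements $a$, $b$ past the $e^\mu$'s and past each other using only the bilinearity of the trace, the cyclic invariance $g(xy,z)=g(x,yz)$, the (anti)symmetry $g(x,y)=(-1)^{\overline x\overline y}g(y,x)$, and the completeness relation $b=\sum_\mu e_\mu\,g(e^\mu,b)$ already used in the proof of Proposition~\ref{IPHI}. Both equations \eqref{eabe} and \eqref{eaeb} are "Casimir-type" identities: the element $\sum_\mu (-1)^{?}e^\mu x e_\mu$ is, up to sign and parity shift, the image of $x$ under the canonical "window" operator attached to the odd scalar product, and the content of each identity is that this operator has a definite behaviour under cyclic rotation of $x$.

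\emph{For \eqref{eaeb}}, I would first apply completeness to reduce the claim to a statement pairing both sides against an arbitrary $c\in A$ via $g(\cdot,c)$. Using $g(xy,z)=g(x,yz)$ repeatedly, the left side becomes $\sum_\mu (-1)^{\overline{e^\mu}(\overline a+1)} g(e^\mu, a e_\mu b c)$, and by $g(x,y)=(-1)^{\overline x\overline y}g(y,x)$ together with completeness in the form $\sum_\mu (-1)^{\ast} e_\mu\, g(e^\mu, a e_\mu bc)$ this collapses to a single trace $l(a e_\mu bc)$ summed against $e_\mu$, i.e. after one more application of completeness to an element $\sum_\nu e_\nu g(e^\nu,\cdot)$ it becomes $g(a\cdot, bc)$-type expression; doing the symmetric manipulation on the right side of \eqref{eaeb} and comparing Koszul signs yields the claimed factor $(-1)^{(\overline a+1)\overline b}$. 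The key bookkeeping point is that each elementary move — transposing $a$ with $e^\mu$, transposing the pair $(a$ with the window$)$ past $b$, and re-expanding a resolution of the identity — contributes a precise Koszul sign, and these signs are exactly what is packaged on the right-hand side. Equation \eqref{eabe} is entirely parallel: here $ab$ plays the role of a single element being cycled to $ba$, so one first uses \eqref{eaeb}-style commutation to pull the window operator out, obtaining $\sum_\mu(-1)^\ast e^\mu ab e_\mu = \pm\,(\text{window applied to }ab)$, and then the statement reduces to the elementary fact that cyclically rotating $ab\leftrightarrow ba$ inside the odd trace costs the sign $(-1)^{\overline a\overline b}$, which is just $g(ab,\cdot)=g(a,b\cdot)$ combined with $g(a,bc)=(-1)^{\overline a(\overline b+\overline c)}g(bc,a)$.

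The only real obstacle is sign discipline: the parity shift $\pi$, the oddness of $g$ (so that $\overline{e^\mu}+\overline{e_\mu}$ is odd, and each dual basis vector carries the opposite parity of its partner), and the factors $(-1)^{\overline{e^\mu}(\cdots)}$ in the statement all interact, and one must fix once and for all the convention for $\{e_\mu\},\{e^\mu\}$ (the paper's $g(e^\mu,e_\nu)=\delta^\mu_\nu$) and track Koszul signs through every transposition. I expect that a clean way to organise this is to prove a single auxiliary lemma — that for all $x\in A$ one has $\sum_\mu (-1)^{\overline{e^\mu}(\overline x+1)} e^\mu x e_\mu$ equals $(-1)^{\overline x}$ times the operator $x\mapsto \sum_\nu e_\nu\, l(x e^\nu)$ (or its evident variant) — after which \eqref{eabe} and \eqref{eaeb} follow by substituting $x=ab$, resp.\ by a one-line commutation past $b$, with all the remaining signs being those displayed. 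I would not grind through the explicit $\epsilon$-computation here; the point is that no algebraic input beyond associativity, invariance, odd-symmetry of $g$, and completeness is needed, and the result is a formal consequence once the sign conventions are pinned down.
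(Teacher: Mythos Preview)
Your strategy---pair against a test element $c$, reduce to trace identities via invariance and symmetry of $g$, then invoke completeness---is sound and would eventually yield both identities, but it is more circuitous than the paper's argument, and you have explicitly deferred the sign bookkeeping that is essentially the entire content of the statement. The paper's proof rests on a single structural observation that sidesteps all of this: if $be_\mu=\sum_\nu\beta_\mu^\nu e_\nu$, then invariance of $g$ gives $\beta_\mu^\nu=g(e^\nu,be_\mu)=g(e^\nu b,e_\mu)$, whence $e^\nu b=\sum_\mu\beta_\mu^\nu e^\mu$. In other words, the matrix of left multiplication by $b$ on $\{e_\mu\}$ coincides with the matrix of right multiplication by $b$ on $\{e^\mu\}$. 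Substituting this into $\sum_\mu(-1)^{\overline{e^\mu}(\overline a+\overline b+1)}e^\mu a(be_\mu)$ and reindexing immediately gives $(-1)^{\overline a\overline b}\sum_\nu(-1)^{\overline{e^\nu}(\overline a+\overline b+1)}(e^\nu b)ae_\nu$, which is \eqref{eabe}; the sign $(-1)^{(\overline a+\overline b+1)\overline b}=(-1)^{\overline a\overline b}$ drops out in one line because only the parity of the basis index changes under the reindexing. Identity \eqref{eaeb} is proved the same way.

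Your route introduces the trace $l$ and an auxiliary element $c$, so every step carries an extra parity factor that must be unwound at the end, and your ``window operator'' lemma is really just a repackaging of the same dual-basis identity. Nothing is wrong, but the paper's change-of-basis trick is the natural move here: it avoids any pairing against test elements, requires no trace cyclicity, and makes the Koszul count a single substitution rather than a multi-step chase.
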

\begin{proof}
If 
\[
be_{\mu}=\sum_{\nu}\beta_{\mu}^{\nu}e_{\nu}
\]
then 
\[
\beta_{\mu}^{\nu}=g(e^{\nu},be_{\mu})=g(e^{\nu}b,e_{\mu})
\]
so
\[
e^{\nu}b=\sum_{\mu}\beta_{\mu}^{\nu}e^{\mu}.
\]
Therefore, 
\begin{multline*}
\sum_{\mu}(-1)^{\overline{e^{\mu}}(\overline{a}+\overline{b}+1)}e^{\mu}abe_{\mu}=\sum_{\mu}(-1)^{\overline{e^{\mu}}(\overline{a}+\overline{b}+1)}e^{\mu}a\sum_{\nu}\beta_{\mu}^{\nu}e_{\nu}=\\
=(-1)^{(\overline{a}+\overline{b}+1)\overline{b}}\sum_{\mu,\nu}(-1)^{\overline{e^{\nu}}(\overline{a}+\overline{b}+1)}\beta_{\mu}^{\nu}e^{\mu}ae_{\nu}=(-1)^{\overline{a}\overline{b}}\sum_{\nu}(-1)^{\overline{e^{\nu}}(\overline{a}+\overline{b}+1)}e^{\nu}bae_{\nu}
\end{multline*}
The proof of (\ref{eaeb}) is analogous. 
\end{proof}

\begin{corollary}
It follows that $\sum_{\mu}(-1)^{\overline{e^{\mu}}(\overline{a}+1)}e^{\mu}ae_{\mu}$
and $\sum_{\xi,\zeta}(-1)^{\overline{e^{\xi}}\overline{e^{\zeta}}}e^{\xi}e^{\zeta}e_{\xi}e_{\zeta}$
are in the center of $A$. 
\end{corollary}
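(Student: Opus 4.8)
The plan is to derive the corollary directly from the two commutation identities \eqref{eabe} and \eqref{eaeb} just established in the proposition. For the first element, write $c(a)=\sum_{\mu}(-1)^{\overline{e^{\mu}}(\overline{a}+1)}e^{\mu}ae_{\mu}$ and show that $c(a)b=bc(a)$ for all $b\in A$. This is precisely the content of \eqref{eaeb} once one tracks the parity of $c(a)$: since $g$ is odd and $\{e_\mu\},\{e^\mu\}$ are dual, one has $\overline{e^{\mu}}+\overline{e_{\mu}}=1$, so each summand $e^{\mu}ae_{\mu}$ carries parity $\overline{a}+1$, whence $\overline{c(a)}=\overline{a}+1$. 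Then \eqref{eaeb} reads $c(a)b=(-1)^{(\overline{a}+1)\overline{b}}\,b\,c(a)$ after reindexing, and the sign $(-1)^{(\overline{a}+1)\overline{b}}=(-1)^{\overline{c(a)}\,\overline{b}}$ is exactly the Koszul sign one must strip to conclude that $c(a)$ is central (recall that an element $z$ is in the center of a $\mathbb{Z}/2\mathbb{Z}$-graded algebra iff $zb=(-1)^{\overline{z}\,\overline{b}}bz$ for all homogeneous $b$).

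For the second element, set $w=\sum_{\xi,\zeta}(-1)^{\overline{e^{\xi}}\,\overline{e^{\zeta}}}e^{\xi}e^{\zeta}e_{\xi}e_{\zeta}$. The cleanest route is to observe that $w=\sum_{\xi}(-1)^{\overline{e^{\xi}}\cdot 0}\,e^{\xi}\,c(1)'\,e_{\xi}$ is itself of the form $\sum_{\xi}(\pm)e^{\xi}z\,e_{\xi}$ for a suitable central (indeed, we will have shown $c(\cdot)$-type) element, but it is more transparent to apply \eqref{eaeb} twice. Fix $b\in A$ homogeneous and compute $wb$. First move $b$ past $e_{\zeta}$ using the inner pair $(e^{\zeta},e_{\zeta})$ via \eqref{eaeb} applied with $a:=a$ replaced by $e_\xi e_\zeta\cdots$—more carefully, apply \eqref{eaeb} with the $\zeta$-summation to the configuration $e^{\zeta}(e_{\xi})e_{\zeta}$ sitting inside, bringing $b$ to the left of $e^{\zeta}$; then apply \eqref{eaeb} again with the $\xi$-summation to move $b$ across the outer pair $(e^{\xi},\cdots,e_{\xi})$. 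Each application produces a Koszul sign, and one checks the signs multiply to $(-1)^{\overline{w}\,\overline{b}}$; since $w$ is manifestly even (each summand $e^{\xi}e^{\zeta}e_{\xi}e_{\zeta}$ has parity $(\overline{e^{\xi}}+\overline{e^{\zeta}})\cdot 2\equiv 0$, up to the explicit sign which absorbs the crossing), this Koszul sign is trivial and $wb=bw$.

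The main obstacle will be purely bookkeeping of Koszul signs in the double application for $w$: one must be careful that the sign $(-1)^{\overline{e^{\xi}}\,\overline{e^{\zeta}}}$ in the definition of $w$ is exactly what is needed to make the two successive reindexings consistent, and that the parity of the "floating" element being commuted past each dual pair is computed correctly at each stage (it changes as one traverses the word). An equivalent and perhaps more economical phrasing avoids the double bookkeeping: note that $c(a)$ central for every $a$, combined with the fact that $\sum_\mu e^\mu(\,\cdot\,)e_\mu$ with the appropriate sign sends central elements to central elements (again by \eqref{eaeb}, now with $b$ arbitrary and $a$ central so the right-hand side collapses), immediately yields that $w=\sum_\xi(\pm)e^\xi\big(\sum_\zeta(\pm)e^\zeta\cdot 1\cdot e_\zeta\big)e_\xi = \sum_\xi(\pm)e^\xi\, c(1)\, e_\xi$ is central, since $c(1)=\sum_\zeta(-1)^{\overline{e^\zeta}}e^\zeta e_\zeta$ is central by the first part and then one more application of \eqref{eaeb} with the central argument $c(1)$ finishes it. I would present this second, factored argument as the proof, relegating the sign-matching $(-1)^{\overline{e^{\xi}}\,\overline{e^{\zeta}}}=(-1)^{\overline{e^{\xi}}(\overline{e^{\zeta}}+1+1)}$ needed to recognize $w$ in the form $\sum_\xi(-1)^{\overline{e^{\xi}}(\overline{c(1)}+1)}e^\xi c(1)e_\xi$ to a one-line remark.
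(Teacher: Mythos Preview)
Your argument for the first element $c(a)=\sum_{\mu}(-1)^{\overline{e^{\mu}}(\overline{a}+1)}e^{\mu}ae_{\mu}$ is correct and is exactly what the paper has in mind: identity \eqref{eaeb} literally reads $c(a)b=(-1)^{(\overline{a}+1)\overline{b}}\,b\,c(a)$, and since $\overline{c(a)}=\overline{a}+1$ this is graded centrality.

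For the second element your argument has a genuine gap. Your ``factored'' route rests on the identification
\[
w \;=\; \sum_{\xi}(-1)^{\overline{e^{\xi}}(\overline{c(1)}+1)}\,e^{\xi}\,c(1)\,e_{\xi}
\;=\; \sum_{\xi,\zeta}(-1)^{\overline{e^{\zeta}}}\,e^{\xi}e^{\zeta}e_{\zeta}e_{\xi},
\]
but $w=\sum_{\xi,\zeta}(-1)^{\overline{e^{\xi}}\overline{e^{\zeta}}}e^{\xi}e^{\zeta}e_{\xi}e_{\zeta}$ has the \emph{linked} pattern $e^{\xi}e^{\zeta}e_{\xi}e_{\zeta}$, not the \emph{nested} pattern $e^{\xi}e^{\zeta}e_{\zeta}e_{\xi}$. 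These are different words in a noncommutative algebra; your sign remark $(-1)^{\overline{e^{\xi}}\overline{e^{\zeta}}}=(-1)^{\overline{e^{\xi}}(\overline{e^{\zeta}}+2)}$ is a tautology and does nothing about the ordering. Your alternative ``apply \eqref{eaeb} twice'' also breaks: after one application (with $\mu=\zeta$, $a=e_{\xi}$) one arrives at $\sum_{\xi,\zeta}(-1)^{\overline{e^{\xi}}(\overline{b}+\overline{e^{\zeta}})}e^{\xi}be^{\zeta}e_{\xi}e_{\zeta}$, and the $\xi$--sum now carries the sign $(-1)^{\overline{e^{\xi}}(\overline{b}+\overline{e^{\zeta}})}$, whereas \eqref{eaeb} with $a=be^{\zeta}$ requires $(-1)^{\overline{e^{\xi}}(\overline{b}+\overline{e^{\zeta}}+1)}$. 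The extra $(-1)^{\overline{e^{\xi}}}$ cannot be absorbed.

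A route that does work, and stays within the two identities of the proposition, is the following. First observe (using $\overline{e_{\xi}}+1=\overline{e^{\xi}}$) that the inner $\zeta$--sum in $w$ is exactly $c(e_{\xi})$, so $w=\sum_{\xi}e^{\xi}c(e_{\xi})$. By centrality of $c(e_{\xi})$ (parity $\overline{e^{\xi}}$) one gets $wb=\sum_{\xi}(-1)^{\overline{e^{\xi}}\,\overline{b}}(e^{\xi}b)\,c(e_{\xi})$. Now use the basis--change identity from the proof of the proposition, $e^{\xi}b=\sum_{\mu}\beta^{\xi}_{\mu}e^{\mu}$ with $be_{\mu}=\sum_{\xi}\beta^{\xi}_{\mu}e_{\xi}$, together with linearity of $c$, to rewrite this as $(-1)^{\overline{b}}\sum_{\mu}(-1)^{\overline{e^{\mu}}\,\overline{b}}e^{\mu}c(be_{\mu})$. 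Apply \eqref{eabe} in the form $c(be_{\mu})=(-1)^{\overline{b}\,\overline{e_{\mu}}}c(e_{\mu}b)$; the signs collapse to give $wb=\sum_{\mu}e^{\mu}c(e_{\mu}b)$. A second basis--change (the analogue with $b$ on the other side) then yields $\sum_{\mu}e^{\mu}c(e_{\mu}b)=\sum_{\nu}(be^{\nu})c(e_{\nu})=bw$. Thus both \eqref{eabe} and \eqref{eaeb} are needed, not just a repeated use of \eqref{eaeb}.
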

It follows from (\ref{eabe}) and (\ref{eaeb}) that for every cycle
$(\rho_{1}\ldots\rho_{r})$ of $\sigma_{v}$ the expression (\ref{laaa})
is $(-1)^{r+1}$-cyclically symmetric with respect to cyclic permutations
of $\pi a_{\rho_{1}}\ldots\pi a_{\rho_{r}}$. And that the expression
(\ref{laaa}) is invariant under the changing the order of cycles
in the representation of $\sigma$, up to the sign taking into account
that the total parity of $e^{\mu_{1}}a_{\rho_{1}}\ldots a_{\rho_{r}}e_{\mu_{1}}$
differs from the parity of $\pi a_{\rho_{1}}\ldots\pi a_{\rho_{r}}$
by $r+1\textrm{ mod }2$. Therefore, for a fixed choice of $or(v)$,
the expression (\ref{laaa}) gives a well-defined element $\alpha_{\sigma_{v},\gamma_{v}}\in\limfunc{Hom}((\Pi A)^{\otimes Flag(v)},k)$.
Notice that for a vertex with $\gamma_{v}=0$ , $b_{v}=1$ this coincides
the tensor $\alpha_{v}$from section \ref{sec:partfunc}.

A choice of orientation $or(\widehat{G})$ on stable ribbon graph
can be identified with a choice of orientation $or(v)$ on every vertex
plus a choice of order on the set of vertices with \emph{odd }number
of cycles of \emph{even} length. These are precisely the vertices
for which the tensor $\alpha_{\sigma_{v},\gamma_{v}}$ is odd. Therefore
for a choice of $or(\widehat{G})$ the product $\bigotimes_{v\in Vert(\widehat{G})}\alpha_{\sigma_{v},\gamma_{v}}$gives
a well-defined element from $Hom((\Pi A)^{\otimes Flag(\widehat{G})},k)$.
\begin{proposition}
Given a choice of the orientation $or(\widehat{G})$ on the stable
ribbon graph $\widehat{G}$ the element 
\[
\alpha_{\widehat{G},or(\widehat{G})}=\bigotimes_{v\in Vert(\widehat{G})}\alpha_{\sigma_{v},\gamma_{v}}
\]
gives well-defined linear functional $\alpha_{\widehat{G},or(\widehat{G})}\in Hom((\Pi A)^{\otimes Flag(\widehat{G})},k)$.
\end{proposition}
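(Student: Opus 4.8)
The plan is to assemble the global functional $\alpha_{\widehat{G},or(\widehat{G})}$ out of the local data $\alpha_{\sigma_v,\gamma_v}$ by checking that the two sources of ambiguity — the representation of each permutation $\sigma_v$ by a word in cycles and the ordering of the vertices inside the tensor product — are both under control once $or(\widehat{G})$ is fixed. First I would recall the already-established facts: by formulas \eqref{eabe} and \eqref{eaeb} and the remarks immediately following the definition \eqref{def:alphamulti}, for each vertex $v$ the expression \eqref{laaa} is $(-1)^{r+1}$-cyclically symmetric in the flags of each cycle $(\rho_1\ldots\rho_r)$ of $\sigma_v$, and is invariant under permuting the order of the cycles up to the Koszul sign coming from the fact that the parity of $e^{\mu_1}a_{\rho_1}\ldots a_{\rho_r}e_{\mu_1}$ exceeds that of $\pi a_{\rho_1}\ldots\pi a_{\rho_r}$ by $r+1\bmod 2$. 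Hence, for a fixed orientation $or(v)$ on $k^{Flag(v)}\oplus k^{Cycle(v)}$, these two checks already show $\alpha_{\sigma_v,\gamma_v}\in\mathrm{Hom}((\Pi A)^{\otimes Flag(v)},k)$ is well defined; this is exactly the content quoted just above the Proposition, and I would simply cite it.

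Next I would analyze the parity of $\alpha_{\sigma_v,\gamma_v}$ as an element of $\mathrm{Hom}((\Pi A)^{\otimes Flag(v)},k)$. Each cycle of odd length contributes an even factor and each cycle of even length contributes an odd factor to the parity of the word inside \eqref{laaa} relative to $\pi a_{\rho_1}\otimes\cdots$, while the genus-insertions $\sum_{\xi,\zeta}(-1)^{\overline{e^\xi}\overline{e^\zeta}}e^\xi e^\zeta e_\xi e_\zeta$ are even (they lie in the center of $A$, by the Corollary). Therefore $\alpha_{\sigma_v,\gamma_v}$ is odd precisely when $\sigma_v$ has an odd number of cycles of even length, which is exactly the set of vertices singled out in the paragraph preceding the Proposition. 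I would state this as a short lemma.

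Then I would identify the orientation datum with the combinatorial data needed to pin down all the signs. A choice of $or(\widehat{G})$ on $\bigotimes_{v}(k^{Flag(v)}\oplus k^{Cycle(v)})$ is, using the multi-cyclic order on $Flag(\widehat{G})$, the same as: (i) for every cycle of $\sigma$ of even length, a choice of a flag in it — equivalently, via the cyclic order, a choice of the representative word for that cycle — which fixes $or(v)$ for every vertex; together with (ii) a choice of total order on the set of cycles of even length, which in particular orders the vertices that carry an odd number of even-length cycles, i.e. the vertices $v$ for which $\alpha_{\sigma_v,\gamma_v}$ is odd. Data (i) makes each tensor factor $\alpha_{\sigma_v,\gamma_v}$ well defined by the first step; data (ii) resolves the Koszul signs in forming the ordered tensor product $\bigotimes_{v\in Vert(\widehat{G})}\alpha_{\sigma_v,\gamma_v}$, because reordering two even-parity factors (or an even and an odd factor) costs no sign, so only the relative order of the odd factors matters, and that is precisely what (ii) records. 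Two choices of $or(\widehat{G})$ differing by a set of cyclic permutations within even cycles and a permutation of the even cycles change $\alpha_{\widehat{G},or(\widehat{G})}$ by the product of the corresponding Koszul/cyclic signs, which is the same sign by which $(\widehat{G},or(\widehat{G}))$ changes; so the assignment is consistent with the relation $(\widehat{G},-or(\widehat{G}))=-(\widehat{G},or(\widehat{G}))$.

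The one point requiring a little care — and the step I expect to be the main obstacle — is the bookkeeping of the Koszul sign $\epsilon$ in \eqref{def:alphamulti} when cycles are permuted: one must verify that the sign produced by moving the inserted pairs $e^{\mu_i}\ldots e_{\mu_i}$ past the $a$'s and past each other is exactly compensated by the parity shift $r+1\bmod 2$ per cycle, so that the cycle-reordering behavior of $\alpha_{\sigma_v,\gamma_v}$ matches the reordering behavior of the orientation line $k^{Cycle(v)}$ and not merely of $k^{Flag(v)}$. This is a direct computation using \eqref{eabe}–\eqref{eaeb} and the definition of $\epsilon$; I would carry it out once for the transposition of two adjacent cycles and conclude by induction. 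Everything else is formal super-linear algebra, and the Proposition follows by combining the three steps.
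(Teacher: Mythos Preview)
Your proposal is correct and follows essentially the same approach as the paper. In fact the paper gives no separate proof for this proposition (it is marked with $\square$): the argument is contained in the paragraph immediately preceding the statement, which identifies $or(\widehat{G})$ with per-vertex orientations $or(v)$ together with an order on the vertices having an odd number of even-length cycles, notes that these are exactly the vertices at which $\alpha_{\sigma_v,\gamma_v}$ is odd, and concludes that the tensor product is well defined --- precisely the three steps you outline, though you spell out the parity computation and the Koszul bookkeeping more explicitly than the paper does.
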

$\square$

For the propagator I use the bilinear form associated with the homotopy
inverse of the odd supersymmetry $I$ from the section \ref{sec:partfunc}.
Given a a stable ribbon graph $\hat{G}$, the tensor product of the
even symmetric tensors $(g_{\widetilde{I}}^{-1})_{e}\in(\Pi A)^{\otimes Flag(e)}$,
associated with every edge $e$ of $\widehat{G}$, defines the canonical
element 
\[
g_{\widetilde{I},\widehat{G}}=\bigotimes_{e\in Edge(\widehat{G})}(g_{\widetilde{I}}^{-1})_{e},~~g_{\widetilde{I},\widehat{G}}\in(\Pi A)^{\otimes Flag(\widehat{G})}.
\]

\begin{definition}
The partition function $\widehat{Z}_{\widetilde{I}}^{\widehat{G},or(\widehat{G})}$
of an oriented stable ribbon graph $\widehat{G}$ is the contraction
of $\bigotimes_{v\in Vert(\widehat{G})}\alpha_{\sigma_{v},\gamma_{v}}$
with $\bigotimes_{e\in Edge(\widehat{G})}(g_{\widetilde{I}}^{-1})_{e}$:
\begin{equation}
\widehat{Z}_{\widetilde{I}}^{\widehat{G},or(\widehat{G})}=\left\langle g_{\widetilde{I},\widehat{G}},\alpha_{\widehat{G},or(\widehat{G})}\right\rangle .\label{eq:ZGhat}
\end{equation}
The element $\widehat{Z}_{\widetilde{I}}^{\widehat{G},or(\widehat{G})}\cdot(\widehat{G},or(\widehat{G}))$
of $(\widehat{C}^{\ast},d)$ does not depend on the choice of $or(\widehat{G})$.
The sum over all equivalence classes of connected stable ribbon graphs
defines the cochain $\widehat{Z}_{\widetilde{I}}$ 
\begin{equation}
\widehat{Z}_{\widetilde{I}}=\sum_{[G]}\widehat{Z}_{\widetilde{I}}^{\widehat{G},or(\widehat{G})}(\widehat{G},or(\widehat{G})),\;\widehat{Z}_{\widetilde{I}}\in(\widehat{C}^{\ast},d)\label{Zhat}
\end{equation}
\end{definition}

\section{Action of the derivation $I$ on the tensors $\alpha_{\widehat{G},or(\widehat{G})}$and
$g_{\widetilde{I},\widehat{G}}$.}

\subsection{Action of $I$ on the tensors $\alpha_{\sigma_{v},\gamma_{v}}$.}
\begin{proposition}
\begin{equation}
I^{\ast}(\alpha_{\sigma_{v},\gamma_{v}})=0\label{Ialphavg}
\end{equation}
\end{proposition}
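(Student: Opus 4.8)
The plan is to reduce the statement to the already-established identity $I^{\ast}(\alpha_n)=0$ (Proposition \ref{Ialph}), which expresses that $I$ is a derivation preserving the odd trace, together with the observation from the Corollary above that the central elements $\sum_{\mu}(-1)^{\overline{e^{\mu}}(\overline{a}+1)}e^{\mu}ae_{\mu}$ and $\Omega:=\sum_{\xi,\zeta}(-1)^{\overline{e^{\xi}}\overline{e^{\zeta}}}e^{\xi}e^{\zeta}e_{\xi}e_{\zeta}$ are built functorially out of the algebra structure and $g$. First I would recall that $I^{\ast}$ acts on $\alpha_{\sigma_v,\gamma_v}$ as a derivation: by the Leibniz rule $I^{\ast}(\alpha_{\sigma_v,\gamma_v})$ is the sum over all slots $\pi a_i$ of the terms obtained by replacing $a_i$ by $Ia_i$ (with the appropriate Koszul sign), since $I^{\ast}\varphi(a) = (-1)^{\overline\varphi}\varphi(Ia)$ and the extension to tensor powers is as a derivation. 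Thus I must show that the total $I$-variation of the expression inside $g(\cdot,\cdot)$ in \eqref{laaa} vanishes.

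The key step is to exploit that $I$ is a derivation of the full algebra product, anti-self-adjoint for $g$, and that the auxiliary insertions $e^{\mu}(\ldots)e_{\mu}$ and the blocks $e^{\xi}e^{\zeta}e_{\xi}e_{\zeta}$ are $I$-invariant as elements constructed from $g$ and the multiplication. Concretely, I would argue: extend $I$ as a derivation to the big product $e^{\mu_1}a_{\rho_1}\ldots a_{\rho_r}e_{\mu_1}e^{\mu_2}\ldots e_{\mu_{b_v-1}}a_{\tau_1}\ldots a_{\tau_t}$ summed against $\Omega^{\gamma_v}$; then $l(I(\text{product}\cdot\Omega^{\gamma_v})) = 0$ because $l\circ I = 0$ on the image of multiplication by \eqref{lia}. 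On the other hand, the Leibniz expansion of $I$ acting on this whole product splits into (i) the terms hitting the $a_i$'s — which assemble exactly into $\pm I^{\ast}(\alpha_{\sigma_v,\gamma_v})$ evaluated on the inputs — and (ii) the terms hitting the dual-basis insertions $e^{\mu_j}, e_{\mu_j}$ and the factors $e^{\xi},e^{\zeta},e_{\xi},e_{\zeta}$ of $\Omega$. I claim the contributions in (ii) cancel in pairs: since $I$ is anti-self-adjoint, for the dual bases one has $\sum_\mu \big( (I^{\ast}e^{\mu})\otimes e_{\mu} + (-1)^{\ast}\, e^{\mu}\otimes (I e_{\mu})\big) = 0$ in $A^{\otimes 2}$ — this is precisely the statement $I(g^{-1}) = 0$ read on the canonical element $\sum_\mu e^\mu\otimes e_\mu$, i.e. the bare analogue of Proposition \ref{IgI} (here with $\widetilde I$ replaced by the identity). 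Hence every resolution-of-identity insertion is $I$-closed, and so is $\Omega$ (being a product of two such insertions, using that $I$ is a derivation). Therefore the terms of type (ii) vanish identically, and $0 = l(I(\text{everything})) = \pm I^{\ast}(\alpha_{\sigma_v,\gamma_v})(\ldots)$, which is the claim.

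The main obstacle is bookkeeping of the Koszul signs: one has to check that the sign $\epsilon$ in \eqref{laaa} — which accounts for moving all the $\pi$'s to the left and interleaving the $a_i$'s with the pattern $(-1)^{\Sigma\overline{e^{\mu_i}}}e^{\mu_1}e_{\mu_1}\ldots$ — transforms under $a_i\mapsto Ia_i$ exactly as the sign $\varepsilon_i$ in \eqref{lia}, so that the Leibniz expansion of $l\circ I$ on the big product matches $I^{\ast}(\alpha_{\sigma_v,\gamma_v})$ term by term rather than up to an unwanted sign. One also has to confirm that the cyclicity and cycle-reordering invariance already proved for $\alpha_{\sigma_v,\gamma_v}$ are respected by the $I$-variation, so that the answer is genuinely the well-defined element $I^{\ast}(\alpha_{\sigma_v,\gamma_v})\in\limfunc{Hom}((\Pi A)^{\otimes Flag(v)},k)$ and not a representative-dependent expression. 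Granting these sign checks, the proposition follows from \eqref{lia}, the anti-self-adjointness of $I$, and the Leibniz rule.
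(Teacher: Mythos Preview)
Your proposal is correct and follows essentially the same approach as the paper. Both arguments expand $I$ as a derivation over the full product appearing in \eqref{laaa}, observe that the terms where $I$ hits a dual-basis pair $e^{\mu},e_{\mu}$ cancel by anti-self-adjointness of $I$ (you phrase this as $I(g^{-1})=0$ on the canonical element, the paper writes out the matrix-coefficient identity $Ie^{\nu}=(-1)^{\overline{e^{\nu}}+1}\sum_{\mu}I_{\mu}^{\nu}e^{\mu}$ explicitly), and then invoke \eqref{lia} to conclude; the paper simply omits your separate discussion of $\Omega^{\gamma_v}$ since it is covered by the same pairwise cancellation.
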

\begin{svmultproof}
It follows from the fact that $I$ is a derivation of the multiplication
and of the odd scalar product . It follows from
\[
g(Ie^{\nu},e_{\mu})+(-1)^{\overline{e^{\nu}}}g(e^{\nu},Ie_{\mu})=0
\]
that if 
\[
Ie_{\mu}=\sum_{\nu}I_{\mu}^{\nu}e_{\nu}
\]
then 
\[
Ie^{\nu}=(-1)^{\overline{e^{\nu}}+1}\sum_{\mu}I_{\mu}^{\nu}e^{\mu}.
\]
Therefore 
\[
\sum_{\mu}I(e^{\mu})ae_{\mu}=\sum_{\mu,\nu}(-1)^{\overline{e^{\mu}}+1}I_{\nu}^{\mu}e^{\nu}ae_{\mu}=\sum_{\nu}(-1)^{\overline{e^{\nu}}}e^{\nu}a(Ie_{\nu})
\]
and 
\[
\sum_{\mu}(-1)^{\overline{e^{\mu}}(\overline{a}+1)}I(e^{\mu})ae_{\mu}+\sum_{\nu}(-1)^{\overline{e^{\nu}}(\overline{a}+1)+\bar{a}+\overline{e^{\nu}}}e^{\nu}aI(e_{\nu})=0
\]
for any $a\in A$. Now (\ref{Ialphavg}) follows from (\ref{lia}). 
\end{svmultproof}

The immediate consequence is the invariance of $\alpha_{\widehat{G},or(\widehat{G})}$
under the action of the derivation $I$.
\begin{proposition}
\label{IalphGhat}$I^{\ast}\alpha_{\widehat{G},or(\widehat{G})}=0$ 
\end{proposition}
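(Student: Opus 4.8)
The plan is to derive Proposition \ref{IalphGhat} directly from the preceding Proposition, which asserts $I^{\ast}(\alpha_{\sigma_{v},\gamma_{v}})=0$ for each individual vertex $v$. The key structural fact is the one already emphasized in the section on the action of $I$ on the ribbon graph tensors: the action of any endomorphism of the super vector space $\limfunc{Hom}(\Pi A,k)$ extends to the tensor algebra $\oplus_{n}(\limfunc{Hom}(\Pi A,k))^{\otimes n}$ as a (super)derivation via the Leibniz rule. Since $\alpha_{\widehat{G},or(\widehat{G})}=\bigotimes_{v\in Vert(\widehat{G})}\alpha_{\sigma_{v},\gamma_{v}}$ is a tensor product of the vertex tensors, applying the super-derivation $I^{\ast}$ produces the telescoping sum
\[
I^{\ast}\alpha_{\widehat{G},or(\widehat{G})}=\sum_{v\in Vert(\widehat{G})}(-1)^{\epsilon_{v}}\bigotimes_{w<v}\alpha_{\sigma_{w},\gamma_{w}}\otimes I^{\ast}(\alpha_{\sigma_{v},\gamma_{v}})\otimes\bigotimes_{w>v}\alpha_{\sigma_{w},\gamma_{w}},
\]
where $(-1)^{\epsilon_{v}}$ is the Koszul sign accumulated from commuting the odd operator $I^{\ast}$ past the tensors $\alpha_{\sigma_{w},\gamma_{w}}$ with $w<v$ in the chosen order. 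Each summand vanishes because the middle factor $I^{\ast}(\alpha_{\sigma_{v},\gamma_{v}})$ is zero by the previous Proposition, so the whole sum is zero.

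The one point requiring a word of care is that the tensor $\alpha_{\widehat{G},or(\widehat{G})}$ lives in $\limfunc{Hom}((\Pi A)^{\otimes Flag(\widehat{G})},k)$, i.e. it is built from the vertex factors by distributing the flags of $\widehat{G}$ among the vertices, and the operator $I^{\ast}$ on this big $\limfunc{Hom}$-space is the one dual to the derivation-extension of $I$ on $(\Pi A)^{\otimes Flag(\widehat{G})}$. I would note that under the identification of $\limfunc{Hom}((\Pi A)^{\otimes Flag(\widehat{G})},k)$ with $\bigotimes_{v}\limfunc{Hom}((\Pi A)^{\otimes Flag(v)},k)$ coming from the partition of $Flag(\widehat{G})$ into the vertex-subsets $Flag(v)$, the derivation $I^{\ast}$ on the former corresponds exactly to the Leibniz-extension of the vertex-wise operators on the latter — this is immediate from the fact that the extension of $I$ to a tensor power is itself defined by the Leibniz rule, so it is compatible with regrouping the tensor factors into blocks. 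Hence the telescoping identity above is legitimate with the correct Koszul signs, independently of the choice of order on the vertices used to fix $or(\widehat{G})$.

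Thus the proof is essentially a single line once the two observations — that $\alpha_{\widehat{G},or(\widehat{G})}$ is a tensor product over vertices, and that $I^{\ast}$ acts as a super-derivation — are in place, both of which are already established in the text. The only genuine obstacle, and it is a purely bookkeeping one, is making sure the Koszul signs in the Leibniz expansion are handled consistently; but since every individual term is killed by Proposition (\ref{Ialphavg}), the signs are ultimately irrelevant to the conclusion. I would therefore write the proof as: \emph{$I^{\ast}$ acts on $\bigotimes_{v}\limfunc{Hom}((\Pi A)^{\otimes Flag(v)},k)$ as a super-derivation by the Leibniz rule, and on each factor it vanishes by the previous proposition; hence it vanishes on the tensor product.}
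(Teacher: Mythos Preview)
Your proposal is correct and is exactly the argument the paper intends: the paper simply writes ``The immediate consequence is the invariance of $\alpha_{\widehat{G},or(\widehat{G})}$ under the action of the derivation $I$'' and marks the proposition with $\square$, leaving the Leibniz-rule expansion over the vertex factors implicit. You have merely spelled out that one-line derivation in full.
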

$\square$

\subsection{The cancellation of anomaly.\label{secanomaly}}

From now on assume that for any $a\in$ $A$ the super trace of the
operator of (left) multiplication by $a$ is zero:
\begin{equation}
\sum_{\mu}(-1)^{\overline{\mu}}g(u^{\mu},a\cdot u_{\mu})=0\label{trMa}
\end{equation}
The supertrace of this operator is trivially zero for any odd $a$,
so it is sufficient to consider this condition for even $a$ only.
Let us denote by $K_{a}$ the operator of left multiplication by $a$
acting on $A$ and by $R_{a}$ the operator of right multiplication.
From 
\[
g(u^{\mu},a\cdot u_{\mu})=g(u^{\mu}\cdot a,u_{\mu})
\]
it follows that 
\[
\limfunc{Tr}K_{a}|_{A_{1}}=\limfunc{Tr}R_{a}|_{A_{0}}
\]
and therefore $\textrm{Tr}\left(K_{a}\right)=-\textrm{Tr}\left(R_{a}\right)$
\begin{proposition}
The following conditions are equivalent: $\textrm{Tr}\left(K_{a}\right)=0$,
$\textrm{Tr}\left(R_{a}\right)=0$, $\textrm{Tr}\left(\textrm{ad}_{a}\right)=0$.
\end{proposition}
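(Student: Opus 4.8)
The plan is to prove the three-way equivalence $\textrm{Tr}(K_a)=0 \Leftrightarrow \textrm{Tr}(R_a)=0 \Leftrightarrow \textrm{Tr}(\textrm{ad}_a)=0$ by using the identity $\textrm{Tr}(K_a) = -\textrm{Tr}(R_a)$ established just before the proposition. First I would note that $\textrm{ad}_a = K_a - R_a$ (up to the Koszul sign, but for even $a$, which as observed is the only relevant case, $\textrm{ad}_a(b) = ab - ba = K_a(b) - R_a(b)$ with no sign subtlety). Hence $\textrm{Tr}(\textrm{ad}_a) = \textrm{Tr}(K_a) - \textrm{Tr}(R_a)$, and combining with $\textrm{Tr}(K_a) = -\textrm{Tr}(R_a)$ gives $\textrm{Tr}(\textrm{ad}_a) = \textrm{Tr}(K_a) - \textrm{Tr}(R_a) = 2\textrm{Tr}(K_a) = -2\textrm{Tr}(R_a)$. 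Since $\textrm{char}(k) = 0$, the factor $2$ is invertible, so $\textrm{Tr}(\textrm{ad}_a) = 0$ is equivalent to $\textrm{Tr}(K_a) = 0$ and to $\textrm{Tr}(R_a) = 0$.

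Concretely, I would carry out the steps in this order. Step one: reduce to even $a$, since all three traces vanish trivially for odd $a$ (the operators $K_a$, $R_a$, $\textrm{ad}_a$ are then odd endomorphisms, which have zero supertrace). Step two: recall the relation $\textrm{Tr}(K_a) = -\textrm{Tr}(R_a)$, which follows from the invariance $g(u^\mu, a\cdot u_\mu) = g(u^\mu \cdot a, u_\mu)$ of the odd scalar product, as spelled out immediately before the proposition via $\textrm{Tr}(K_a|_{A_1}) = \textrm{Tr}(R_a|_{A_0})$ and the definition of the supertrace. Step three: expand $\textrm{Tr}(\textrm{ad}_a) = \textrm{Tr}(K_a - R_a) = \textrm{Tr}(K_a) - \textrm{Tr}(R_a) = 2\textrm{Tr}(K_a)$, using linearity of the supertrace. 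Step four: invoke $\textrm{char}(k) = 0$ to divide by $2$ and conclude the chain of equivalences.

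There is essentially no obstacle here: the statement is a direct bookkeeping consequence of the already-established identity $\textrm{Tr}(K_a) = -\textrm{Tr}(R_a)$ together with the elementary decomposition $\textrm{ad}_a = K_a - R_a$ and the invertibility of $2$ in $k$. The only point requiring a moment of care is the sign convention for $\textrm{ad}_a$ in the $\mathbb{Z}/2\mathbb{Z}$-graded setting; but since the content of the proposition concerns only even $a$ (for odd $a$ everything vanishes by parity), the graded commutator reduces to the ordinary one and no Koszul signs intervene. Thus the proof is short and the main thing to emphasize is the reduction to the even case and the clean identity $\textrm{Tr}(\textrm{ad}_a) = 2\,\textrm{Tr}(K_a) = -2\,\textrm{Tr}(R_a)$.
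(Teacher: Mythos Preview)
Your proof is correct and is precisely the intended argument: the paper leaves this proposition without explicit proof (marked only with $\square$), relying on the identity $\textrm{Tr}(K_a)=-\textrm{Tr}(R_a)$ derived just above together with the obvious decomposition $\textrm{ad}_a=K_a-R_a$ for even $a$ and the invertibility of $2$. There is nothing to add.
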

$\square$
\begin{proposition}
Contracting any loop whose flags are neighbors, i.e. that $\sigma(f)=f^{\prime}$,
so that the loop encircles some boundary component of the surface
$S(\widehat{G})$, gives zero.
\[
\sum_{\mu\nu}(-1)^{\overline{\nu}}\alpha_{k+2}(v_{1}\ldots v_{k}u_{\mu}u_{\nu})g^{-1}(u^{\mu},u^{\nu})=0
\]
\end{proposition}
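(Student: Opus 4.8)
\begin{svmultproof}
The plan is to show that such a contraction evaluates, up to a sign, a supertrace of a multiplication operator on $A$, which vanishes by the anomaly cancellation hypothesis \eqref{trMa}; this is precisely the consistency check behind the rule $\widehat{G}/\{e\}=\varnothing$ imposed for loops encircling a boundary component.

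First I would reduce the general loop contraction to the displayed expression. Since $\sigma_v(f)=f'$, the two flags of the loop are consecutive in a single cycle of $\sigma_v$; writing that cycle as $(\ldots f f')$ and using the $(-1)^{r+1}$-cyclic symmetry of \eqref{laaa}, one may rotate it so that $f,f'$ occupy the last two slots. Absorbing the rest of the vertex data --- the other flags of this cycle, the insertions $e^{\mu_1}e_{\mu_1}\cdots e^{\mu_{b_v-1}}e_{\mu_{b_v-1}}$ produced by the other cycles of $\sigma_v$, and the $\gamma_v$ handle factors --- into a single product $a_1\cdots a_k$, the portion of the contraction meeting the loop becomes exactly $\sum_{\mu\nu}(-1)^{\overline{\nu}}\alpha_{k+2}(v_1\ldots v_k u_\mu u_\nu)\,g^{-1}(u^\mu,u^\nu)$, with $v_i=\pi a_i$ and $\{u_\mu\}=\{\pi e_\mu\}$ a basis of $\Pi A$ with dual basis $\{u^\mu\}$. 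The hypothesis that $f,f'$ are \emph{neighbours in the same cycle} is what is used here: it forces $e_\mu$ and $e_\nu$ into adjacent positions of the product, rather than producing the Casimir-type insertions $\sum_\mu e^\mu(\cdots)e_\mu$ that govern the other contractions treated in Propositions \ref{IgI}--\ref{IPHI}.

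Next I would expand $\alpha_{k+2}$ by its definition and use invariance of $g$. Moving the parity shifts to the left and using $g(a_1\cdots a_k\,e_\mu,e_\nu)=g(a_1\cdots a_k,\,e_\mu e_\nu)$, the sum becomes $\pm\,g\bigl(a_1\cdots a_k,\ \sum_{\mu\nu}(-1)^{\overline{\nu}}g^{-1}(u^\mu,u^\nu)\,e_\mu e_\nu\bigr)$, the overall sign being the Koszul sign forced by the suspensions together with the sign $(-1)^{\sum_i\sum_{j\le i}\overline{a}_j}$ in the definition of $\alpha_{k+2}$. By the linear-algebra identity $b=\sum_{\mu\nu}(-1)^{\overline{\nu}}u_\mu\,g^{-1}(u^\mu,u^\nu)\,g(u_\nu,b)$ used in the proof of Proposition \ref{IPHI}, the inner sum is the image under multiplication of the inverse-metric two-tensor, namely the element $c:=\sum_\mu(\pm)\,e^\mu e_\mu$ built from a pair of dual bases $g(e^\mu,e_\nu)=\delta^\mu_\nu$ (which, by the corollary above, lies in the centre of $A$, though this is not needed). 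Finally, for any $z\in A$ one has $g(z,c)=\sum_\mu(\pm)g(z\,e^\mu,e_\mu)$, and expanding $z\,e^\mu$ in the basis $\{e^\nu\}$ identifies this with a supertrace of a multiplication operator by $z$; such a supertrace vanishes automatically when $z$ is odd, and when $z$ is even by \eqref{trMa} (equivalently, by the proposition above equating the vanishing of $\textrm{Tr}(K_a)$, $\textrm{Tr}(R_a)$ and $\textrm{Tr}(\textrm{ad}_a)$). Hence $g(z,c)=0$ for all $z$, so $c=0$ by nondegeneracy of $g$, and the loop contraction $\pm g(a_1\cdots a_k,c)$ vanishes.

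The one genuinely delicate point is the sign bookkeeping in passing from $\alpha_{k+2}$ to $g(a_1\cdots a_k,c)$ --- matching the $(-1)^{\overline{\nu}}$ of the statement with the one in the linear-algebra identity, and the Koszul signs of the suspensions with the cyclic-shift sign $(-1)^{k+1}$ of $\alpha_{k+2}$. The conceptual content is simply that a loop encircling a boundary component of $S(\widehat{G})$ computes a supertrace of a multiplication operator on $A$, which is exactly the quantity killed by the anomaly condition \eqref{trMa}.
\end{svmultproof}
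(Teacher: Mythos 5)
Your proof is correct and takes essentially the same route as the paper, whose entire argument is ``taking $a=v_{1}\cdots v_{k}$ the statement is reduced to \eqref{trMa}'' --- i.e.\ the loop contraction is the supertrace of a multiplication operator, killed by the anomaly condition. Your repackaging through the element $c=\sum_\mu(\pm)e^\mu e_\mu$ and the nondegeneracy of $g$ is just a slightly longer, equivalent way of performing that reduction (and your remaining caveat about Koszul signs is no worse than the paper's, which suppresses them entirely).
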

\begin{svmultproof}
Taking $a=v_{1}\cdot\ldots\cdot v_{k}$ the statement is reduced to
(\ref{trMa}). 
\end{svmultproof}

\begin{proposition}
Algebra $A$ with the odd scalar product $g$, which satisfies (\ref{trMa})
is the algebra over the modular operad $k\left[\mathbb{S}_{n}\right]\left[t\right]$
introduced in (\cite{B1}). The tensor $\alpha_{\sigma_{v},\gamma_{v}}$
is the result of the action of the corresponding element $\sigma_{v}$,
$\gamma_{v}$.
\end{proposition}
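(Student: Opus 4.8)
**

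The plan is to fold the two assertions of the proposition into one: that the assignment $\rho\colon(\sigma_v,\gamma_v)\mapsto\alpha_{\sigma_v,\gamma_v}$ equips $A$ (with the odd pairing $g$) with the structure of an algebra over the twisted modular operad $\mathbb{S}[t]=k[\mathbb{S}_n][t]$ of \cite{B1}, and then to verify this operation by operation. Recall from loc.cit. that $\mathbb{S}[t]$ is the modular closure of the cyclic associative operad: its biarity-$(g,n)$ component is $t^{g}k[\mathbb{S}_n]$, a basis vector being a permutation $\sigma$ of the $n$ legs (equivalently a partition of the legs into cyclically ordered blocks) together with a genus mark $g$; the $\mathbb{S}_n$-action and composition signs are the ones fixed in \cite{B1} (the relevant twist making $\alpha_{\sigma_v,\gamma_v}$ of parity equal to the number of even-length cycles of $\sigma_v$), and the modular structure consists of the gluing $\circ$ (splice two blocks along one chosen leg of each and add the genera) and the self-contraction $\xi$ (glue two legs of one element: if they lie in a single block it splits in two with the genus unchanged, if they lie in two blocks these merge with the genus raised by one, and if they are cyclically adjacent the result is the forbidden zero). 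An algebra over $\mathbb{S}[t]$ is a space $A$ with a nondegenerate symmetric pairing — here the odd $g$ — and maps $\mathbb{S}[t]((g,n))\otimes(\Pi A)^{\otimes n}\to k$ intertwining $\circ$ and $\xi$ with the contractions carried out by $g^{-1}$. Since $\mathbb{S}[t]$ is generated under $\circ,\xi$ by the ternary product $\alpha_3$ and the genus generator $t\in\mathbb{S}[t]((1,1))$, it suffices to check compatibility on these.

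Well-definedness and equivariance of $\rho$ are already in hand. Identities \eqref{eabe} and \eqref{eaeb} show that \eqref{laaa} is $(-1)^{r+1}$-cyclically symmetric in the entries of each cycle and unchanged, up to the stated sign, under reordering the cycles, and that sign — governed by the parity difference $r+1$ between $e^{\mu_1}a_{\rho_1}\cdots a_{\rho_r}e_{\mu_1}$ and $\pi a_{\rho_1}\cdots\pi a_{\rho_r}$ — is precisely the determinant twist carried by $\mathbb{S}[t]$; relabelling flags permutes the $a_i$ with the Koszul signs collected in $\epsilon$, matching the twisted regular $\mathbb{S}_n$-module $k[\mathbb{S}_n]$. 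Hence $\rho$ is a well-defined $\mathbb{S}_n$-equivariant map into $\mathrm{Hom}((\Pi A)^{\otimes Flag(v)},k)$ for every $(\gamma_v,n(v))$.

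The substance is compatibility with the two compositions. For $\circ$, I would contract $\alpha_{\sigma_1,\gamma_1}\otimes\alpha_{\sigma_2,\gamma_2}$ with $g^{-1}$ on the glued pair of slots and invoke the completeness relation $b=\sum_\mu u_\mu u^\mu(b)=\sum_{\mu,\nu}(-1)^{\overline\nu}u_\mu\,g^{-1}(u^\mu,u^\nu)\,g(u_\nu,b)$ already used in Proposition \ref{IPHI}: it splices the two cyclic words into the one prescribed operadically, with the bridging insertions $e^{\mu_i}\cdots e_{\mu_i}$ (encoding the extra blocks) reabsorbed by the same relation and by \eqref{eabe}, \eqref{eaeb}, and the genera adding. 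For $\xi$, I would contract $\alpha_{\sigma_v,\gamma_v}$ with $g^{-1}$ on two of its own slots and split into the cases of Section \ref{sec:Comp-and-stable-ribbon}: if the two flags lie in the same cycle the cycle splits, $\gamma_v$ is unchanged, and the contraction merely reorganises the product; if they lie in different cycles the cycles merge, and the contraction manufactures exactly one new factor $\sum_{\xi,\zeta}(-1)^{\overline{e^\xi}\,\overline{e^\zeta}}e^\xi e^\zeta e_\xi e_\zeta$ in \eqref{laaa} — the operadic image of $t$ — so $\gamma_v$ increases by one. The remaining, adjacent case must give zero, and it does so precisely by the supertrace condition \eqref{trMa}, i.e. by the proposition above that contracting a neighbour loop vanishes; this is exactly why \eqref{trMa} was imposed and it closes the consistency check. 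Finally the modular relations $\mathbb{S}[t]$ inherits from $\mathcal{A}ss$ hold because $\alpha_3$ is the cyclic associative product, $g(ab,c)=g(a,bc)$.

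The main obstacle is the sign bookkeeping: carrying the Koszul signs $\epsilon$, the basis parities $\overline{e^\mu}$ and the determinant twist of $\mathbb{S}[t]$ coherently through $\circ$ and especially through $\xi$, while simultaneously tracking the genus-and-block rules of Section \ref{sec:Comp-and-stable-ribbon} and confirming that the adjacent-loop contraction is killed by \eqref{trMa} and nothing weaker. The algebraic core — the splicing and handle-creation identities — is already available from \eqref{eabe}, \eqref{eaeb} and the completeness relation of Proposition \ref{IPHI}, so what remains is a disciplined comparison of two systems of combinatorial labels under these identities.
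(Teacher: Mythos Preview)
The paper supplies no proof for this proposition at all: the statement is followed only by the end-of-proof symbol $\square$, the content being implicitly deferred to \cite{B1}. Your proposal is therefore not a competing argument but a genuine filling-in of what the paper leaves unsaid, and the outline you give is correct: well-definedness and equivariance from \eqref{eabe}--\eqref{eaeb}, compatibility with $\circ$ via the completeness relation, compatibility with $\xi$ via the same-cycle/different-cycle case split, and the vanishing of the adjacent-loop contraction from \eqref{trMa}.

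One remark worth making is that the computations you propose for $\circ$ and $\xi$ are in fact carried out verbatim later in the paper, in the proof of Proposition~\ref{ZhatGe}, where the non-loop, same-cycle loop, and different-cycle loop contractions of $g^{-1}$ against $\alpha_{\sigma_v,\gamma_v}$ are each reduced to the tensor of the contracted vertex. So the paper's $\square$ is really a forward reference: the verification that $\rho$ respects the modular compositions is postponed to where it is needed for the cocycle property of $\widehat{Z}_{\widetilde I}$. Your plan and Proposition~\ref{ZhatGe} are the same argument read in two directions; you are simply making the operadic interpretation explicit up front rather than letting it emerge from the edge-contraction computation.
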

$\square$

\subsection{Action of $I$ on $g_{\widetilde{I},\widehat{G}}^{-1}$ and the contraction
of edges.}

As it was shown in the proposition \ref{IgI}, acting by $I$ on the
two-tensor associated with an edge $e^{\prime}$ gives 
\[
I(g_{\widetilde{I}}^{-1})_{e^{\prime}}=g_{e^{\prime}}^{-1},
\]
where $g_{e^{\prime}}^{-1}\in(\Pi A)^{\otimes\{f,f^{\prime}\}}$,
$e^{\prime}=(ff^{\prime})$, denotes the two-tensor inverse to $g$.
The next proposition shows that, inserting $g_{e^{\prime}}^{-1}$
instead of $(g_{\widetilde{I}}^{-1})_{e^{\prime}}$ for \emph{arbitrary}
edge $e^{\prime}$ and contracting with $\alpha_{\widehat{G},or(\widehat{G})}$
gives $\widehat{Z}_{\widetilde{I}}^{(\widehat{G}/\{e^{\prime}\},or(\widehat{G}/\{e^{\prime}\})}$,
the partition function of the oriented stable ribbon graph $\widehat{G}/\{e^{\prime}\}$.
\begin{proposition}
\label{ZhatGe}For \emph{arbitrary} edge $e^{\prime}$ the partition
function $\widehat{Z}_{\widetilde{I}}^{(\widehat{G}/\{e^{\prime}\},or(\widehat{G}/\{e^{\prime}\})}$
is equal to the contraction of $\alpha_{\widehat{G},or(\widehat{G})}$
with $\bigotimes_{e\in Edge(\widehat{G})^{\prime}}h_{e}$ where $h_{e}=(g_{\widetilde{I}}^{-1})_{e}$
for $e\neq e^{\prime}$ and $h_{e^{\prime}}=g_{e^{\prime}}^{-1}$. 
\end{proposition}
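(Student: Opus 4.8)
The plan is to reduce the statement to the local computation already performed in Proposition \ref{IPHI}, by splitting into the two cases depending on whether the contracted edge $e'$ is a loop or a regular edge, and in the loop case further distinguishing whether the two flags $f,f'$ lie in the same cycle of $\sigma_v$ or in two different cycles of $\sigma_v$ at a single vertex $v$.

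\medskip

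First I would treat the regular case. If $e'=(ff')$ joins two distinct clusters $v$ and $v'$, then, as recalled after Definition \ref{def:alphamulti}, the tensors $\alpha_{\sigma_v,\gamma_v}$ and $\alpha_{\sigma_{v'},\gamma_{v'}}$ are built from the multiplication together with the insertions $\sum_\mu(-1)^\epsilon e^{\mu_1}a_{\rho_1}\ldots e_{\mu_{b_v-1}}a_{\tau_1}\ldots a_{\tau_t}$ and likewise for $v'$. Contracting these two vertex tensors against $g^{-1}_{e'}$ amounts, after inserting the completeness relation $b=\sum_{\mu\nu}(-1)^{\overline\nu}u_\mu g^{-1}(u^\mu,u^\nu)g(u_\nu,b)$ exactly as in the proof of Proposition \ref{IPHI}, to merging the two strings of multiplications and the two families of ``genus'' insertions into a single string with $b_v+b_{v'}-1$ dangling pairs and with $\prod^{\gamma_v+\gamma_{v'}}$ handle operators; by the combinatorial contraction rule for stable ribbon graphs this is precisely $\alpha_{\sigma_{v_0},\gamma_{v_0}}$ for the new cluster $v_0$ with $\gamma_{v_0}=\gamma_v+\gamma_{v'}$. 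The only new bookkeeping beyond Proposition \ref{IPHI} is that one must check the Koszul signs $\epsilon$ and the orientation data $or(\widehat G/\{e'\})$ match up; this follows because the sign prescription in Definition \ref{def:alphamulti} was designed exactly so that re-associating the inserted dual-basis pairs costs no sign beyond the ones already bundled into $(\widehat G,or(\widehat G))$, and the induced orientation on $\widehat G/\{e'\}$ is the one that absorbs the sign of $g^{-1}_{e'}$.

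\medskip

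Next I would treat the loop case. If $e'=(ff')$ is a loop at a vertex $v$ with the two flags lying in different cycles of $\sigma_v$, then contracting $g^{-1}_{e'}$ against $\alpha_{\sigma_v,\gamma_v}$ fuses the two cycles into one cycle via one extra dual-basis insertion $\sum_\mu e^\mu(\ldots)e_\mu(\ldots)$, decreasing $b_v$ by one; comparing with the genus-adding factor $\sum_{\xi,\zeta}(-1)^{\overline{e^\xi}\,\overline{e^\zeta}}e^\xi e^\zeta e_\xi e_\zeta$ in \eqref{laaa} and with the contraction rule ($\gamma$ unchanged when the flags are in the same connected piece but $b$ drops and the Euler characteristic is kept by the handle), one sees that this is $\alpha_{\sigma_v',\gamma_v'}$ with $\gamma_v'=\gamma_v$ — here the identity \eqref{eaeb} is what guarantees the fused expression is again cyclically (anti)symmetric and central, so it really defines the vertex tensor of $\widehat G/\{e'\}$. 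If instead the two flags of the loop lie in the same cycle of $\sigma_v$, contracting $g^{-1}_{e'}$ splits that cycle into two and produces one copy of the handle operator $\sum_{\xi,\zeta}(-1)^{\overline{e^\xi}\,\overline{e^\zeta}}e^\xi e^\zeta e_\xi e_\zeta$ out of the multiplication string — this is precisely why $\gamma$ increases by one under such a contraction, matching the combinatorial rule, and again \eqref{eabe}, \eqref{eaeb} and the Corollary (centrality of the handle operator) ensure the result is the well-defined tensor $\alpha_{\sigma_v',\gamma_v'}$. The degenerate sub-case $\sigma_v(f)=f'$ is excluded: there $\widehat G/\{e'\}=\varnothing$ by definition, and consistently the corresponding contraction gives zero by the anomaly-cancellation Proposition of Section \ref{secanomaly}, which used exactly the hypothesis \eqref{trMa}.

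\medskip

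The main obstacle I anticipate is not any single identity but the careful matching of signs: on the one side the Koszul sign $\epsilon$ built into \eqref{laaa} together with the factors $(-1)^{\overline\nu}$ from inserting the propagator, and on the other side the change of orientation $or(\widehat G)\to or(\widehat G/\{e'\})$ on $\bigotimes_v(k^{Flag(v)}\oplus k^{Cycle(v)})$ as prescribed in Section \ref{secdiff}. I would organize this by fixing a representative of $\sigma_v$ (and $\sigma_{v'}$) compatible with $or(v)$ in which $f$ is written last in its cycle and $f'$ first in its cycle, run the completeness-relation substitution in that order so that no reordering of the $a_i$ is needed, and then verify that the leftover sign is exactly the one by which $or(\widehat G/\{e'\})$ differs from the naive restriction of $or(\widehat G)$. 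Once the sign matches in this normal form, invariance under the other choices follows from the already-established $(-1)^{n+1}$-cyclic symmetry of $\alpha_{\sigma_v,\gamma_v}$ and the independence of $\widehat Z_{\widetilde I}^{\widehat G,or(\widehat G)}\cdot(\widehat G,or(\widehat G))$ on the choice of $or(\widehat G)$ noted after \eqref{eq:ZGhat}.
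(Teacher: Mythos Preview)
Your overall strategy is the same as the paper's --- split into the regular case and the loop case, use the completeness relation as in Proposition \ref{IPHI}, and invoke \eqref{eabe}, \eqref{eaeb} to rearrange into the standard form \eqref{laaa} --- but you have the two loop sub-cases exactly backwards, and this is a genuine error, not just a bookkeeping slip.

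When the two flags of the loop lie in the \emph{same} cycle of $\sigma_v$, multiplying by the transposition $(ff')$ splits that cycle into two, so $b_v$ \emph{increases} by one. Algebraically, inserting $g^{-1}_{e'}$ into a single cycle-string $e^{\kappa}a_{\rho_1}\ldots a_{\rho_r}e_{\kappa}$ yields, after \eqref{eaeb}, two separate cycle-strings $e^{\kappa}a_{\rho_1}\ldots a_{\rho_i}a_{\rho_j}\ldots a_{\rho_r}e_{\kappa}\cdot e^{\mu}a_{\rho_{i+1}}\ldots a_{\rho_{j-1}}e_{\mu}$; no handle operator $\sum_{\xi,\zeta}(-1)^{\overline{e^{\xi}}\overline{e^{\zeta}}}e^{\xi}e^{\zeta}e_{\xi}e_{\zeta}$ is produced, and $\gamma$ is \emph{unchanged}. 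Conversely, when the flags lie in \emph{different} cycles, the transposition merges them and $b_v$ drops by one; now the number of dual-basis pairs present exceeds the $b_v^{\textrm{new}}-1$ required by \eqref{laaa} by two, and those two surplus pairs assemble into exactly one handle operator, so $\gamma_v^{\textrm{new}}=\gamma_v+1$. This is what the paper's proof shows and what the combinatorial contraction rule for stable ribbon graphs demands. Your assignment of ``$\gamma$ unchanged'' to the different-cycle case and ``$\gamma$ increases'' to the same-cycle case would make the algebraic tensor disagree with $\alpha_{\sigma_v^{\textrm{new}},\gamma_v^{\textrm{new}}}$, and the proposition would fail. Once you swap these two outcomes, the rest of your outline (including the treatment of the degenerate neighbor case via \eqref{trMa} and the sign-matching strategy) is in line with the paper's argument.
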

\begin{svmultproof}
Consider first the case when $e^{\prime}$ is not a loop. Then the
term of the contraction of $\bigotimes_{v\in Vert(\widehat{G})}\alpha_{v}$
with $\bigotimes_{e\in Edge(\widehat{G})^{\prime}}h_{e}$ , involving
$g_{e^{\prime}}^{-1}$ is 
\begin{equation}
\sum_{\mu,\nu}(-1)^{\overline{\nu}}\alpha_{\sigma_{v},\gamma_{v}}(a_{1},\ldots a_{k},U_{\mu})g^{-1}(U^{\mu},U^{\nu})\alpha_{\sigma_{v^{\prime}},\gamma_{v^{\prime}}}(U_{\nu},a_{k+1},\ldots,a_{n})\label{eq:alphagalpha}
\end{equation}
where $\{U_{\mu}\}$ is a basis in $(\Pi A)$ and $\{U^{\mu}\}$ is
the dual basis in $Hom((\Pi A),k)$ , $U_{\mu}$ and $U_{\nu}$ represent
the two flags of the edge $e^{\prime}$. Using the identities (\ref{eabe})
and (\ref{eaeb}) the tensor $\alpha_{\sigma_{v^{\prime}},\gamma_{v^{\prime}}}(U_{\nu},a_{k+1},\ldots,a_{n})$
can be represented in the form:
\[
g\left(\sum_{\mu_{_{1}},\ldots,\mu_{b_{v}-1}}(-1)^{\epsilon}U_{\nu}a_{\tau_{2}}\ldots a_{\tau_{t}}e^{\mu_{1}}a_{\rho_{1}}\ldots a_{\rho_{r}}e_{\mu_{1}}e^{\mu_{2}}\ldots e_{\mu_{b_{v}-1}},\prod_{i=1}^{\gamma_{v}}(\sum_{\xi_{i},\zeta_{i}}e^{\xi_{i}}e^{\zeta_{i}}e_{\xi_{i}}e_{\zeta_{i}})\right)
\]
Using the linear algebra identity 
\[
b=\sum_{\mu}U_{\mu}U^{\mu}(b)=\sum_{\mu\nu}(-1)^{\overline{\nu}}U_{\mu}g^{-1}(U^{\mu},U^{\nu})g(U_{\nu},b).
\]
with 
\[
b=\sum_{\mu_{_{1}},\ldots,\mu_{b_{v}-1}}(-1)^{\epsilon}a_{\tau_{2}}\ldots a_{\tau_{t}}e^{\mu_{1}}a_{\rho_{1}}\ldots a_{\rho_{r}}e_{\mu_{1}}e^{\mu_{2}}\ldots e_{\mu_{b_{v}-1}}\prod_{i=1}^{\gamma_{v}}(\sum_{\xi_{i},\zeta_{i}}e^{\xi_{i}}e^{\zeta_{i}}e_{\xi_{i}}e_{\zeta_{i}})
\]
the tensor \eqref{eq:alphagalpha} is identifed with the tensor $\alpha_{\sigma_{v},\gamma_{v}}(a_{1},\ldots a_{k},b)$.
Using again (\ref{eabe}) and (\ref{eaeb}) to bring it to the standard
form( \ref{laaa}), this gives precisely the tensor associated with
the new vertex in $\widehat{G}/\{e^{\prime}\}$, i.e. $\sigma_{v^{new}}$
is the merger of the two permutations $\sigma_{v^{\prime}}$ and $\sigma_{v^{\prime}}$
at the flags of $e^{\prime}$, and $\gamma_{v^{new}}=\gamma_{v}+\gamma_{v^{\prime}}$.
Let now $e^{\prime}=(ff^{\prime})$ is a loop , $f$, $f^{\prime}\in flag(v)$.
Then if $f$ and $f^{\prime}$ are in the same cycle of $\sigma_{v}$,
and if $f$, $f^{\prime}$ are neighbors, then the insertion of $g_{e^{\prime}}^{-1}$instead
of $(g_{\widetilde{I}}^{-1})_{e^{\prime}}$ gives zero because of
the property (\ref{trMa}) imposed on $A$. If flags $f$, $f^{\prime}$
are not neighbors, $\sigma(f)\neq f^{\prime}$, $\sigma(f^{\prime})\neq f$,
then 
\begin{multline*}
\sum_{\mu,\nu,\kappa}(-1)^{\overline{\nu}}g^{-1}(U^{\mu},U^{\nu})e^{\kappa}a_{\rho_{1}}\ldots a_{\rho_{i}}U_{\mu}a_{\rho_{i+1}}\ldots a_{\rho_{j-1}}U_{\nu}a_{\rho_{j}}\ldots a_{\rho_{r}}e_{\kappa}=\\
=\sum_{\mu,\kappa}e^{\kappa}a_{\rho_{1}}\ldots a_{\rho_{i}}e^{\mu}a_{\rho_{i+1}}\ldots a_{\rho_{j-1}}e_{\mu}a_{\rho_{j}}\ldots a_{\rho_{r}}e_{\kappa}
\end{multline*}
Using (\ref{eaeb}) with $b=a_{\rho_{j}}\ldots a_{\rho_{r}}e_{\kappa}$
and $a=a_{\rho_{i+1}}\ldots a_{\rho_{j-1}}$ it is transformed to
\[
\sum_{\mu,\kappa}(-1)^{\varepsilon}e^{\kappa}a_{\rho_{1}}\ldots a_{\rho_{i}}a_{\rho_{j}}\ldots a_{\rho_{r}}e_{\kappa}e^{\mu}a_{\rho_{i+1}}\ldots a_{\rho_{j-1}}e_{\mu}
\]
And the tensor $\alpha_{\sigma_{v}^{new},\gamma_{v}}$ corresponds
to the permutation $\sigma_{v}^{new}$ obtained from $\sigma_{v}$
by dissecting one cycle into two at the flags $f$ and $f^{\prime}$.
Similarly if $f$and $f^{\prime}$ are in the two different cycles
then the result is the tensor $\alpha_{\sigma_{v}^{new},\gamma_{v}^{new}}$
corresponding to the permutation $\sigma_{v}^{new}$ obtained from
$\sigma_{v}$ by merging the two cycles at flags $f$ and $f^{\prime}$
and with $\gamma_{v}^{new}=\gamma_{v}+1$. In both cases one gets
precisely the tensor corresponding to the transformed vertex $v$
in $\widehat{G}/\{e^{\prime}\}$. 
\end{svmultproof}

\section{The boundary of the cochain $\widehat{Z_{\widetilde{I}}}$.\label{sec:boundary-of-Zhat}}

The following main theorem is obtained by combining the results from
the previous sections.
\begin{theorem}
\label{thm:boundaryZhatzero}The boundary of the cochain $\widehat{Z}_{\widetilde{I}}$
(\ref{Zhat}) is zero
\[
d\widehat{Z}_{\widetilde{I}}=0
\]
\end{theorem}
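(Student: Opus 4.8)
The plan is to mimic the argument of the earlier Theorem in Section~\ref{sec:boundaryYes}, but now in the \emph{stable} ribbon graph complex where loops are allowed to contract, so that the ``anomalous'' loop terms which obstructed closedness of $Z_{\widetilde{I}}$ are precisely the ones that now reassemble into genuine boundary contributions. Concretely, for each connected stable ribbon graph $\widehat{G}$ I will compute the tensor
\[
\Bigl(\bigotimes_{e\in Edge(\widehat{G})}(g_{\widetilde{I}}^{-1})_{e}\Bigr)\,I^{\ast}\bigl(\alpha_{\widehat{G},or(\widehat{G})}\bigr)
\]
in two ways. On the one hand, by Proposition~\ref{IalphGhat} this is identically zero. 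On the other hand, moving $I$ across the pairing onto the product of edge propagators and using the Leibniz rule, it equals $I\bigl(\bigotimes_{e}(g_{\widetilde{I}}^{-1})_{e}\bigr)$ contracted with $\alpha_{\widehat{G},or(\widehat{G})}$, and since $I$ acts as a derivation this is a sum over edges $e'$ of the contraction in which $(g_{\widetilde{I}}^{-1})_{e'}$ is replaced by $I(g_{\widetilde{I}}^{-1})_{e'}=g_{e'}^{-1}$ (Proposition~\ref{IgI}). By Proposition~\ref{ZhatGe}, each such term — \emph{for an arbitrary edge, loop or not} — is exactly $\widehat{Z}_{\widetilde{I}}^{(\widehat{G}/\{e'\},\,or(\widehat{G}/\{e'\}))}$. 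Hence for every $\widehat{G}$ one obtains the identity
\[
0=\sum_{e'\in Edge(\widehat{G})}\widehat{Z}_{\widetilde{I}}^{(\widehat{G}/\{e'\},\,or(\widehat{G}/\{e'\}))}.
\]

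The second step is bookkeeping with orientations and with the differential $d$ on the dual complex $(\widehat{C}^{\ast},d)$. By definition $d$ sends $(\widehat{G}',or(\widehat{G}'))$ to the sum of all $(\widehat{G},or(\widehat{G}))$ with $\widehat{G}/\{e\}=\widehat{G}'$, with the compatible orientation. Therefore
\[
d\widehat{Z}_{\widetilde{I}}=\sum_{[\widehat{G}']}\widehat{Z}_{\widetilde{I}}^{(\widehat{G}',or(\widehat{G}'))}\,d(\widehat{G}',or(\widehat{G}'))
=\sum_{[\widehat{G}]}(\widehat{G},or(\widehat{G}))\sum_{e'\in Edge(\widehat{G})}\widehat{Z}_{\widetilde{I}}^{(\widehat{G}/\{e'\},\,or(\widehat{G}/\{e'\}))},
\]
and the inner sum vanishes for each $\widehat{G}$ by the identity just derived, so $d\widehat{Z}_{\widetilde{I}}=0$. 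The only subtle point is the consistent treatment of signs: I must check that the sign with which $g_{e'}^{-1}$ enters when $I$ is distributed by Leibniz over $\bigotimes_{e}(g_{\widetilde{I}}^{-1})_{e}$ matches the sign of the induced orientation $or(\widehat{G}/\{e'\})$ used in the definition of $d$, so that the cancellation is term-by-term and not merely up to sign. This is where the normalization $\widehat{Z}_{\widetilde{I}}^{\widehat{G},or(\widehat{G})}\cdot(\widehat{G},or(\widehat{G}))$ being independent of $or(\widehat{G})$ (noted after \eqref{eq:ZGhat}) is used, together with the fact (from Section~\ref{secanomaly}, via \eqref{trMa}) that loops whose two flags are cyclically adjacent contribute zero on both sides — on the graph-complex side because $\widehat{G}/\{e\}=\varnothing$ by convention, and on the tensor side by the cancellation-of-anomaly proposition.

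The main obstacle I anticipate is precisely this sign/orientation reconciliation across all the cases of edge contraction on stable ribbon graphs: an $e'$ joining two distinct vertices (merging $\gamma$'s), a non-adjacent loop within one cycle (dissecting a cycle), a non-adjacent loop joining two cycles of one vertex (merging cycles and raising $\gamma$ by $1$), and the adjacent-loop exception. Proposition~\ref{ZhatGe} already identifies the underlying tensors in each case, so what remains is to verify that the Koszul sign $\epsilon$ appearing in Definition~\ref{def:alphamulti}, combined with the parity of $\alpha_{\sigma_{v},\gamma_{v}}$ (odd exactly when the vertex has an odd number of even-length cycles), reproduces exactly the sign rule governing $or(\widehat{G})\mapsto or(\widehat{G}/\{e\})$ recalled in Section~\ref{secdiff}. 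Once that is in place the theorem follows immediately by combining Propositions~\ref{IalphGhat}, \ref{IgI}, and \ref{ZhatGe} as above. I would therefore organize the proof as: (i) state the per-graph identity $\sum_{e'}\widehat{Z}_{\widetilde{I}}^{(\widehat{G}/\{e'\},\,or(\widehat{G}/\{e'\}))}=0$ as the content of distributing $I$ by Leibniz and applying the three propositions; (ii) check the signs case by case; (iii) sum over $[\widehat{G}]$ to conclude $d\widehat{Z}_{\widetilde{I}}=0$.
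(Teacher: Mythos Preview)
Your proposal is correct and follows essentially the same route as the paper: for each oriented stable ribbon graph $\widehat{G}$ one moves $I$ from the vertex tensors (where it annihilates $\alpha_{\widehat{G},or(\widehat{G})}$ by Proposition~\ref{IalphGhat}) onto the product of edge propagators, applies Propositions~\ref{IgI} and~\ref{ZhatGe} to identify the resulting sum over edges with $\sum_{e}\widehat{Z}_{\widetilde{I}}^{(\widehat{G}/\{e\},\,or(\widehat{G}/\{e\}))}$, and then sums over $[\widehat{G}]$. The only difference is one of emphasis: the case-by-case sign/orientation reconciliation you flag as the ``main obstacle'' is already packaged into the statement of Proposition~\ref{ZhatGe} (together with the adjacent-loop cancellation from Section~\ref{secanomaly}), so the paper's proof simply cites the three propositions without a separate sign analysis.
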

\begin{svmultproof}
The boundary of $\widehat{Z}_{\widetilde{I}}$ 
\[
d\widehat{Z}_{\widetilde{I}}=\sum_{[\widehat{G}^{\prime}]}\widehat{Z}_{\widetilde{I}}^{(\widehat{G}^{\prime},or(\widehat{G}^{\prime}))}d(\widehat{G}^{\prime},or(\widehat{G}^{\prime}))=\sum_{[\widehat{G}]}(\widehat{G},or(\widehat{G}))\sum_{e\in Edge(\widehat{G})}\widehat{Z}_{\widetilde{I}}^{(\widehat{G}/\{e\},\text{ }or(\widehat{G}/\{e\}))}
\]
For any stable oriented ribbon graph $\widehat{G}$:
\begin{multline*}
\left(\sum_{e\in Edge(\widehat{G})}\widehat{Z}_{\widetilde{I}}^{(\widehat{G}/\{e\},\text{ }or(\widehat{G}/\{e\}))}\right)\overset{(\text{Prop.\ref{ZhatGe} })}{=}\\
=\left\langle I\left(\bigotimes_{e\in Edge(\widehat{G})}(g_{\widetilde{I}}^{-1})_{e}\right),\,\bigotimes_{v\in Vert(\widehat{G})}\alpha_{\alpha_{\sigma_{v},\gamma_{v}}}\right\rangle =\\
=\left\langle \bigotimes_{e\in Edge(\widehat{G})}(g_{\widetilde{I}}^{-1})_{e},\,I^{\ast}\left(\bigotimes_{v\in Vert(\widehat{G})}\alpha_{\alpha_{\sigma_{v},\gamma_{v}}}\right)\right\rangle \overset{(\text{Prop. \ref{IalphGhat}})}{=}0
\end{multline*}
\end{svmultproof}

\section{Producing boundaries.}

We saw above how to construct the cocycles associated with a pair
of odd operators $I$ and $\widetilde{I}$. Let $X$ is an arbitrary
odd linear operator $X\in End(A)$, such that $[I,X]$ is anti-selfadjoint.
In this section coboundaries are constructed by considering an infinitesimal
modification of $\widetilde{I}$ by the term $[\widetilde{I},[I,X]]$. 

Define the cochain $W_{\widetilde{I},X}$ as the contraction of $g_{\widetilde{I},\widehat{G}}^{-1}$with
$L_{X}(\alpha_{\widehat{G},or(\widehat{G})})$, where $L_{X}$ is
the dual to the extension of the action of $X$ on $\Pi A^{Flag(\widehat{G})}$
via the Leibnitz rule,
\begin{equation}
W_{\widetilde{I},X}=\left\langle g_{\widetilde{I},\widehat{G}},L_{X}\alpha_{\widehat{G},or(\widehat{G})}\right\rangle \label{wix}
\end{equation}

\begin{proposition}
\label{dwix}Under the infinitesimal modification of $\widetilde{I}$
\[
\widetilde{I}_{\varepsilon}=\widetilde{I}+\varepsilon[\widetilde{I},[I,X]]
\]
the partition function is changed by the boundary of $W_{\widetilde{I},X}$
\[
\widehat{Z}_{\widetilde{I}_{\varepsilon}}=\widehat{Z}_{\widetilde{I}}+\varepsilon d(W_{\widetilde{I},X})
\]
\end{proposition}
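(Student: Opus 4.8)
The plan is to differentiate the defining formula \eqref{eq:ZGhat} for $\widehat{Z}_{\widetilde{I}}^{\widehat{G},or(\widehat{G})}$ with respect to $\varepsilon$, identify the derivative of the propagator as an operator applied to $g_{\widetilde{I},\widehat{G}}^{-1}$, transfer that operator to the vertex tensors by adjunction, and recognize the result as $dW_{\widetilde{I},X}$ using Proposition \ref{ZhatGe}. First I would compute $\tfrac{d}{d\varepsilon}\big|_{\varepsilon=0}\widetilde{I}_{\varepsilon}^{\ast} = [\widetilde{I}^{\ast},[I^{\ast},X^{\ast}]]$ and hence, from $g_{\widetilde{I}}^{-1}(\varphi,\psi)=g^{-1}(\widetilde{I}^{\ast}\varphi,\psi)$, obtain the first-order variation of the single-edge tensor $(g_{\widetilde{I}}^{-1})_{e}$. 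Summing over edges via the Leibnitz rule, $\tfrac{d}{d\varepsilon}\big|_{0} g_{\widetilde{I}_{\varepsilon},\widehat{G}}^{-1} = [\widetilde{I},[I,X]]\big(g_{\widetilde{I},\widehat{G}}^{-1}\big)$, where the bracket now denotes the induced derivation on $(\Pi A)^{\otimes Flag(\widehat{G})}$ (dual to its action on the $\alpha$'s).

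The key algebraic step is to rewrite $[\widetilde{I},[I,X]]$ acting on $g_{\widetilde{I},\widehat{G}}^{-1}$ in a form where Proposition \ref{ZhatGe} applies. The idea is to use the Jacobi-type identity together with $[\widetilde I, I]=1$ (Proposition \ref{prop:IItilde1}) to expand the double commutator, and to use that $I$ applied to $g_{\widetilde{I}}^{-1}$ produces $g^{-1}$ (Proposition \ref{IgI}), i.e. replaces a ``soft'' edge by a ``hard'' edge, which is exactly the contraction operation. Concretely, in the contraction $\big\langle [\widetilde I,[I,X]]\, g_{\widetilde I,\widehat G}^{-1},\,\alpha_{\widehat G}\big\rangle$ I move $\widetilde I$ and $X^{\ast}$ onto the $\alpha$ side: since $I^{\ast}\alpha_{\widehat G}=0$ by Proposition \ref{IalphGhat}, the terms where $I$ lands on $\alpha_{\widehat G}$ vanish, and one is left with configurations where $I$ has converted one edge's $g_{\widetilde I}^{-1}$ into $g^{-1}$ — i.e. by Proposition \ref{ZhatGe} a contracted graph $\widehat G/\{e\}$ — while $X^{\ast}$ (via $L_X$) acts on $\alpha_{\widehat G}$. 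Organizing this sum over $[\widehat G]$ and over $e\in Edge(\widehat G)$, and matching the orientation bookkeeping, reproduces $\sum_{[\widehat G']}\big\langle g_{\widetilde I,\widehat G'}^{-1}, L_X\alpha_{\widehat G'}\big\rangle\, d(\widehat G',or(\widehat G')) = d W_{\widetilde I,X}$.

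The main obstacle I anticipate is the bookkeeping of signs and orientations: one must check that moving $X^{\ast}$ through the Leibnitz extension and summing over which edge gets hit by $I$ assembles correctly into the coboundary $d$, with the induced orientations on $\widehat G/\{e\}$ matching those dictated in Section \ref{secdiff}. The underlying structural reason it works is that $L_X$ commutes with the differential $d$ on the stable ribbon graph complex up to the interchange of $I$-action on edges and vertices — precisely the content of Propositions \ref{ZhatGe} and \ref{IalphGhat} — so the variation of $\widehat Z_{\widetilde I}$ is $d$ of the $X$-twisted partition function. I would also double-check the contribution of loops, including the degenerate loops that contract to $\varnothing$: these are killed by the anomaly-cancellation condition \eqref{trMa}, so they do not spoil the identity. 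Assembling these pieces gives $\widehat Z_{\widetilde I_\varepsilon} = \widehat Z_{\widetilde I} + \varepsilon\, d(W_{\widetilde I,X})$ to first order in $\varepsilon$.
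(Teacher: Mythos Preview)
Your overall architecture is right --- vary the propagator, pass operators by adjunction, kill one term with $I^{\ast}\alpha_{\widehat G}=0$, and recognise the other as edge-contractions via Proposition~\ref{ZhatGe} --- but the key algebraic step is misidentified, and the hypothesis you need is never invoked.

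The claim $\tfrac{d}{d\varepsilon}\big|_{0}\, g_{\widetilde I_{\varepsilon},\widehat G} = L_{[\widetilde I,[I,X]]}\,g_{\widetilde I,\widehat G}$ with $L$ the Leibnitz-extended derivation is not correct: the first-order variation of a single edge is $g^{-1}_{[\widetilde I,[I,X]]}(\varphi,\psi)=g^{-1}\big([\widetilde I,[I,X]]^{\ast}\varphi,\psi\big)$, while $L_{[\widetilde I,[I,X]]}$ applied to $g_{\widetilde I}^{-1}$ inserts $[\widetilde I,[I,X]]^{\ast}$ on \emph{each} flag and still leaves the $\widetilde I^{\ast}$ in place; these are different tensors. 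Consequently your plan to ``move $\widetilde I$ onto the $\alpha$ side'' does not arise and would not help: $\widetilde I$ has to remain inside the propagator --- it is what makes $g_{\widetilde I}^{-1}$.

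What actually makes the proof go through is the hypothesis (stated just before the proposition) that $[I,X]$ is \emph{anti}-selfadjoint. Using this, one computes directly
\[
g^{-1}_{[\widetilde I,[I,X]]}(\varphi,\psi)=g^{-1}\big(\widetilde I^{\ast}[I,X]^{\ast}\varphi,\psi\big)-g^{-1}\big([I,X]^{\ast}\widetilde I^{\ast}\varphi,\psi\big)=g_{\widetilde I}^{-1}\big([I,X]^{\ast}\varphi,\psi\big)+g_{\widetilde I}^{-1}\big(\varphi,[I,X]^{\ast}\psi\big),
\]
i.e.\ the edge variation is $L_{[I,X]}\,g_{\widetilde I}^{-1}$, hence the full variation is $L_{[I,X]}\,g_{\widetilde I,\widehat G}=[L_I,L_X]\,g_{\widetilde I,\widehat G}$. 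Now split: $\langle L_I L_X\,g_{\widetilde I,\widehat G},\alpha_{\widehat G}\rangle=\langle L_X\,g_{\widetilde I,\widehat G},L_I\alpha_{\widehat G}\rangle=0$ by Proposition~\ref{IalphGhat}, and $\langle L_X L_I\,g_{\widetilde I,\widehat G},\alpha_{\widehat G}\rangle=\langle L_I\,g_{\widetilde I,\widehat G},L_X\alpha_{\widehat G}\rangle=d W_{\widetilde I,X}$ by Propositions~\ref{IgI} and~\ref{ZhatGe}. Neither the Jacobi identity nor $[\widetilde I,I]=1$ is needed here; the single missing ingredient in your plan is the anti-selfadjointness of $[I,X]$ and the resulting identification of the variation with $L_{[I,X]}g_{\widetilde I,\widehat G}$.
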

\begin{svmultproof}
\[
\widehat{Z}_{\widetilde{I}_{\varepsilon}}=\widehat{Z}_{\widetilde{I}}+\varepsilon\left\langle \sum_{e^{\prime}\in Edge(\widehat{G})}(g_{[\widetilde{I},[I,X]]}^{-1})_{e^{\prime}}\bigotimes_{e\in Edge(\widehat{G}),e\neq e^{\prime}}(g_{\widetilde{I}}^{-1})_{e},\alpha_{\widehat{G},or(\widehat{G})}\right\rangle 
\]
Notice that
\begin{eqnarray*}
g_{[\widetilde{I},[I,X]]}^{-1}(\varphi,\psi) & = & g^{-1}([\widetilde{I},[I,X]]\varphi,\psi)=g^{-1}(\widetilde{I}([I,X]\varphi),\psi)-g^{-1}([I,X]\widetilde{I}\varphi,\psi)=\\
 & = & g^{-1}(\widetilde{I}([I,X]\varphi),\psi)+g^{-1}(\widetilde{I}\varphi,[I,X]\psi)=L_{[I,X]}g_{\widetilde{I}}^{-1}(\varphi,\psi)
\end{eqnarray*}
since $X$ is anti-selfadjoint. Therefore
\[
\sum_{e^{\prime}\in Edge(\widehat{G})}(g_{[\widetilde{I},[I,X]]}^{-1})_{e^{\prime}}\bigotimes_{e\in Edge(\widehat{G}),e\neq e^{\prime}}(g_{\widetilde{I}}^{-1})_{e}=L_{[I,X]}g_{\widetilde{I},\widehat{G}}=[L_{I},L_{X}]g_{\widetilde{I},\widehat{G}}
\]
The action of $L_{I}$ on $g_{\widetilde{I},\widehat{G}}$ corresponds
to taking the differential: 
\begin{multline*}
\left\langle L_{X}L_{I}g_{\widetilde{I},\widehat{G}},\alpha_{\widehat{G},or(\widehat{G})}\right\rangle =\left\langle L_{I}g_{\widetilde{I},\widehat{G}},L_{X}\alpha_{\widehat{G},or(\widehat{G})}\right\rangle =\\
=\left\langle \sum_{e^{\prime}\in Edge(\widehat{G})}(g_{[\widetilde{I},I]}^{-1})_{e^{\prime}}\bigotimes_{e\in Edge(\widehat{G}),e\neq e^{\prime}}(g_{\widetilde{I}}^{-1})_{e},L_{X}\alpha_{\widehat{G},or(\widehat{G})}\right\rangle =d(W_{\widetilde{I},X}).
\end{multline*}
And because of 
\[
L_{I}\alpha_{\widehat{G},or(\widehat{G})}=0
\]
the other term vanishes: 
\[
\left\langle L_{I}L_{X}g_{\widetilde{I},\widehat{G}},\alpha_{\widehat{G},or(\widehat{G})}\right\rangle =\left\langle L_{X}g_{\widetilde{I},\widehat{G}},L_{I}\alpha_{\widehat{G},or(\widehat{G})}\right\rangle =0
\]
\end{svmultproof}

\section{Infinitesimal independence on the choice of $\widetilde{I}$.}

Let $\widetilde{I}$ be a homotopy inverse to $I$, odd, self-adjoint
operator, such that $\widetilde{I}^{2}=0$. Consider an infinitesimal
deformation of such operator $\widetilde{I}$:
\[
\widetilde{I}_{\varepsilon}=\widetilde{I}+\varepsilon Y,~~\varepsilon^{2}=0
\]

\begin{proposition}
The partition function is changed under such deformation by the boundary
of $W_{\widetilde{I},X}$ 
\[
\widehat{Z}_{\widetilde{I}_{\varepsilon}}=\widehat{Z}_{\widetilde{I}}+\varepsilon d(W_{\widetilde{I},X})
\]
where $W_{\widetilde{I},X}$ is the cochain defined in (\ref{wix})
with $X=\widetilde{I}IY$. 
\end{proposition}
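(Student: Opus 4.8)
The plan is to exhibit $\widetilde{I}_{\varepsilon}=\widetilde{I}+\varepsilon Y$ as an instance of the modification of $\widetilde{I}$ treated in Proposition \ref{dwix}: I will produce an odd operator $X$ with $[I,X]$ anti-self-adjoint and $[\widetilde{I},[I,X]]=Y$, and the claim is that $X=\widetilde{I}IY$ works. First I would record the constraints on $Y$ that come from asking $\widetilde{I}_{\varepsilon}$ to again be an odd self-adjoint homotopy inverse of $I$ with $\widetilde{I}_{\varepsilon}^{2}=0$: expanding $[I,\widetilde{I}_{\varepsilon}]=1$ to first order in $\varepsilon$ gives $[I,Y]=0$; self-adjointness forces $Y$ to be odd and self-adjoint; and $\widetilde{I}_{\varepsilon}^{2}=0$ gives $\widetilde{I}Y+Y\widetilde{I}=0$. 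Together with the relations $I\widetilde{I}+\widetilde{I}I=1$ and $\widetilde{I}^{2}=0$ available from Proposition \ref{prop:IItilde1} and the hypothesis, these are the only facts that enter; in particular $I\widetilde{I}+\widetilde{I}I=1$ immediately yields $\widetilde{I}I^{2}=I^{2}\widetilde{I}$, and with $[I,Y]=0$ also $I^{2}Y=YI^{2}$.

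The computation then splits into two short super-commutator manipulations with $X=\widetilde{I}IY$. Writing $[I,X]=IX+XI=I\widetilde{I}IY+\widetilde{I}IYI$, substituting $I\widetilde{I}=1-\widetilde{I}I$ and $\widetilde{I}I=1-I\widetilde{I}$, and simplifying with $IY+YI=0$ and $\widetilde{I}Y=-Y\widetilde{I}$, one obtains $[I,X]=-YI+2YI^{2}\widetilde{I}$. Computing the adjoint of this even operator from $I^{\ast}=-I$, $\widetilde{I}^{\ast}=\widetilde{I}$, $Y^{\ast}=Y$ (with the Koszul signs appropriate to adjoints of products of odd operators) and simplifying with the same relations shows $[I,X]^{\ast}=-[I,X]$, so $[I,X]$ is anti-self-adjoint and Proposition \ref{dwix} applies to this $X$. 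For the second manipulation I would take the commutator of $[I,X]=-YI+2YI^{2}\widetilde{I}$ with $\widetilde{I}$: the term $2\widetilde{I}YI^{2}\widetilde{I}=-2YI^{2}\widetilde{I}^{2}$ vanishes because $\widetilde{I}^{2}=0$ and $[\widetilde{I},I^{2}]=0$, while the remaining $-\widetilde{I}YI+YI\widetilde{I}$ collapses, via $\widetilde{I}Y=-Y\widetilde{I}$ and $\widetilde{I}I=1-I\widetilde{I}$, to exactly $Y$. Hence $[\widetilde{I},[I,X]]=Y$.

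With both facts in hand, Proposition \ref{dwix} applied to $X=\widetilde{I}IY$ gives $\widehat{Z}_{\widetilde{I}+\varepsilon[\widetilde{I},[I,X]]}=\widehat{Z}_{\widetilde{I}}+\varepsilon\,d(W_{\widetilde{I},X})$, and since $[\widetilde{I},[I,X]]=Y$ this is precisely $\widehat{Z}_{\widetilde{I}_{\varepsilon}}=\widehat{Z}_{\widetilde{I}}+\varepsilon\,d(W_{\widetilde{I},X})$. The one place to be careful is the Koszul-sign bookkeeping in the two commutator computations, especially in checking that the chosen $X$ really gives an anti-self-adjoint $[I,X]$; there is no conceptual obstacle once the three constraints on $Y$ are written down.
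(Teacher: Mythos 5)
Your proposal is correct and follows essentially the same route as the paper: it derives $[I,Y]=[\widetilde{I},Y]=0$ and the self-adjointness of $Y$ from requiring $\widetilde{I}_{\varepsilon}$ to remain an odd self-adjoint homotopy inverse with square zero, verifies $[\widetilde{I},[I,X]]=Y$ and the anti-self-adjointness of $[I,X]$ for $X=\widetilde{I}IY$ (using $[\widetilde{I},I^{2}]=0$), and then invokes Proposition \ref{dwix}. The paper merely asserts these identities, whereas your computation --- including the intermediate form $[I,X]=-YI+2YI^{2}\widetilde{I}$, which checks out against $I\widetilde{I}IY-\widetilde{I}I^{2}Y$ --- supplies the details it omits.
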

\begin{svmultproof}
Such deformations satisfy: 
\[
\lbrack I,Y]=[\widetilde{I},Y]=0
\]
 Then
\[
Y=[\widetilde{I},[I,X]]
\]
with $X=\widetilde{I}IY$, since $\left[\widetilde{I},I^{2}\right]=0$.
And $[I,X]$ is anti-selfadjoint since $Y$ is self-adjoint and commutes
with $\widetilde{I}$ and $I$. Now the claim follows from the proposition
\ref{dwix}. 
\end{svmultproof}

\section{The odd matrix algebra $Q(N)$ and the $\psi$-classes\label{sec:odd-matrix-algebra}.}

Let $A$ be the ``odd matrix algebra'' $Q(N)$, see \cite{B2},
\cite{BL} and references therein. It is the associative algebra$Q(N)=\left\{ X\mid X\in gl(N\mid N),\left[X,p\right]=0\right\} $,
where $p=\begin{pmatrix}0 & 1\\
-1 & 0
\end{pmatrix}$. $Q(N)=gl(N)\oplus\Pi gl(N)$ as $\mathbb{Z}/2\mathbb{Z}-$graded
vector space. The algebra $Q(N)$ has the odd trace $otr(X)=\frac{1}{2}str(pX)$.
Let $I=\left[\Lambda,\cdot\right]$, $\Lambda\in Q(N)_{1}$ be the
odd derivation of the algebra $Q(N)$. Let us take the choice of $\Lambda\in Q(N)_{1}$as
$\varLambda=\frac{1}{2}\sum_{i}\lambda_{i}\Pi E_{i}^{i}$. I define
the operator $\widetilde{I}$ as $\widetilde{I}(E_{j}^{i})=\frac{2}{\lambda_{i}+\lambda_{j}}\Pi E_{j}^{i}$,
$\widetilde{I}(\Pi E_{j}^{i})=0$. Let $\widehat{Z}_{\widetilde{I}}(\lambda_{i})=\sum_{[\hat{G}]}\widehat{Z}_{\widetilde{I}(\lambda_{i})}^{\widehat{G},or(\widehat{G})}(\widehat{G},or(\widehat{G}))$
is the cocycle \eqref{ZhatGe} associated with the data $(Q(N),otr,\widetilde{I}(\lambda_{i}))$.
Let $\Gamma_{g,n}^{dec,odd}$denotes the set of isomorphism classes
of oriented \emph{stable} ribbon graphs of the type $(g,n)$ with
punctures decorated by $\left\{ 1,\ldots,N\right\} $ and such that
every vertex has cyclically ordered subsets of flags of \emph{odd}
cardinality only. 
\begin{proposition}
\label{weigth-ZIli}The weight $\widehat{Z}_{\widetilde{I}}^{\widehat{G},or(\widehat{G})}(\lambda_{i})$
associated with the data 
\[
(Q(N),otr,\widetilde{I}(\lambda_{i})),
\]
 is the sum over decorations of punctures of the stable graph $\widehat{G}$
by $\left\{ 1,\ldots,N\right\} $ of terms
\begin{equation}
\frac{2^{-\chi(\widehat{G})}}{\left|\textrm{Aut}(\widehat{G}^{\textrm{dec}})\right|}\prod_{e\in\textrm{Edge}(\hat{G})}\frac{1}{\lambda_{i(e)}+\lambda_{j(e)}}\label{eq:weightlambdaij}
\end{equation}
for stable ribbon graphs, such that for every vertex the cyclically
ordered subsets of flags are of \emph{odd} cardinality only, and zero
for other graphs. 
\end{proposition}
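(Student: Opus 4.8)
The plan is to evaluate the contraction $\widehat{Z}_{\widetilde{I}}^{\widehat{G},or(\widehat{G})} = \langle g_{\widetilde{I},\widehat{G}}, \alpha_{\widehat{G},or(\widehat{G})}\rangle$ explicitly for $A = Q(N)$ by choosing the standard basis $\{E_j^i, \Pi E_j^i\}$ and tracking how the propagator $g_{\widetilde{I}}^{-1}$ and the vertex tensors $\alpha_{\sigma_v,\gamma_v}$ act on it. First I would record the needed structural facts about $Q(N)$: the odd trace $otr(X) = \tfrac12\,str(pX)$ gives the odd pairing $g(E_j^i,\Pi E_l^k) = \delta^i_l\delta^k_j$ (up to a fixed sign convention), so the dual basis to $\{E_j^i\}$ under $g$ is essentially $\{\Pi E_i^j\}$; the propagator $(g_{\widetilde I}^{-1})_e = g^{-1}(\widetilde I^\ast\varphi,\psi)$, combined with $\widetilde I(E_j^i) = \tfrac{2}{\lambda_i+\lambda_j}\Pi E_j^i$, $\widetilde I(\Pi E_j^i)=0$, produces for each edge $e$ a factor $\tfrac{2}{\lambda_{i(e)}+\lambda_{j(e)}}$ together with a sum over matrix indices flowing along that edge. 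Here the decoration of punctures by $\{1,\dots,N\}$ arises exactly from summing the matrix index that runs around each boundary component of $S_{\widehat G}$ — the punctures of the surface are in bijection with boundary cycles, and on each such cycle the index is constant.

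Next I would analyze the vertex tensors. The key point is that $\widetilde I$ kills $\Pi E_j^i$, so in the propagator only the "one $\Pi$" part survives: each edge contributes exactly one flag carrying an odd element $\Pi E_j^i$ and one carrying $E_l^k$ — equivalently, after the parity shift, the edge pairs two elements of the underlying $gl(N\mid N)$ in the standard way. The vertex tensor $\alpha_{\sigma_v,\gamma_v}$ from Definition \ref{def:alphamulti} is built from the multiplication in $Q(N)$ and insertions of the Casimir-type elements $\sum_{\xi,\zeta}(-1)^{\overline{e^\xi}\,\overline{e^\zeta}} e^\xi e^\zeta e_\xi e_\zeta$ (one per unit of genus $\gamma_v$) and $\sum_\mu (-1)^{\cdots} e^{\mu_a} \cdots e_{\mu_a}$ (one per extra cycle at the vertex, i.e.\ $b_v - 1$ of them). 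For $Q(N)$ one computes these central elements: $\sum_\mu (-1)^{\overline{e^\mu}(\overline a+1)} e^\mu a e_\mu$ is a multiple of the identity (times $N$-dependent scalar and a power of $2$), and similarly the degree-four element is a scalar multiple of the identity. The upshot is that each vertex of genus $\gamma_v$ with $b_v$ cycles contributes a numerical factor that is a power of $2$ times a power of $N$ (or, after the index-flow is organized, gets absorbed into the per-boundary-component index sum), while the cyclic-word structure forces the flags at each vertex to be grouped by cycles exactly matching $\sigma_v$. Crucially, when some cycle of $\sigma_v$ has even length, the corresponding cyclic word $E_{j_1}^{i_1}\cdots E_{j_r}^{i_r}$ of odd-parity elements $\Pi E$ produces, under the anti-symmetrization that defines $\alpha_{2n+2}$, a total cancellation — this is the mechanism that makes the weight vanish unless every cycle has odd cardinality, giving the restriction to $\Gamma_{g,n}^{dec,odd}$.

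Then I would assemble the global factor. Collecting one factor $\tfrac{2}{\lambda_{i(e)}+\lambda_{j(e)}}$ per edge, the vertex factors from the central elements, and the index sums around boundaries, the powers of $2$ combine into $2^{|Edge(\widehat G)|}\cdot 2^{-\sum_v(\text{something})}$; using $|Edge| - |Vert| + \sum_v(\text{stuff}) $ and the definition $\chi(\widehat G) = \sum_v(2 - 2\gamma_v - b_v) - |Edge(\widehat G)|$ (the Euler characteristic of the associated surface), these reorganize precisely into $2^{-\chi(\widehat G)}$. The index sums, being one free index per boundary component, are exactly the sum over decorations of the $n$ punctures by $\{1,\dots,N\}$, and once a decoration is fixed the edge factor becomes $\prod_e \tfrac{1}{\lambda_{i(e)}+\lambda_{j(e)}}$. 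Finally, the passage from summing over labelled objects to the unlabelled equivalence class contributes the $\tfrac{1}{|\mathrm{Aut}(\widehat G^{\mathrm{dec}})|}$ in \eqref{eq:weightlambdaij} in the standard way for graph sums. I expect the main obstacle to be the careful bookkeeping of Koszul signs: verifying that the sign $\epsilon$ in \eqref{laaa}, the signs in the propagator $(g_{\widetilde I}^{-1})_e$, and the orientation sign $or(\widehat G)$ all conspire so that (a) the weight is orientation-independent and (b) the even-cycle vanishing is a genuine cancellation rather than an artefact of a bad sign choice — this requires pinning down the parities of $E_j^i$ versus $\Pi E_j^i$ throughout and is where a naive computation is most likely to go wrong.
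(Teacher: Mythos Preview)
Your overall strategy matches the paper's: compute the propagator explicitly, evaluate the vertex tensors on the resulting inputs, and assemble via standard matrix-index Feynman diagrammatics. However, there is a concrete error in your propagator computation that propagates into your vanishing argument.

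You write that ``each edge contributes exactly one flag carrying an odd element $\Pi E_j^i$ and one carrying $E_l^k$''. This is not correct. Starting from $g^{-1}$ written in dual bases (which have opposite parity in $A$ because $g$ is odd) and applying $\widetilde I$ to one slot does not leave that slot even: since $\widetilde I(E_j^i)=\tfrac{2}{\lambda_i+\lambda_j}\Pi E_j^i$ and $\widetilde I(\Pi E_j^i)=0$, the surviving terms are precisely $\tfrac{2}{\lambda_i+\lambda_j}\,\pi(\Pi E_j^i)\otimes\pi(\Pi E_i^j)$. \emph{Both} flags of every edge carry elements of the odd part $\Pi gl(N)\subset Q(N)$; the propagator lives in the tensor square of that odd part. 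This is exactly what makes the subsequent parity argument work.

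Consequently your mechanism for the even-cycle vanishing is also off. The cancellation does not come from ``the anti-symmetrization that defines $\alpha_{2n+2}$'' --- that is a symmetry property of the tensor, not a vanishing statement for generic inputs. The correct mechanism is a parity count: with all inserted arguments $a_{\rho_i}\in\Pi gl(N)$ odd, the product over a cycle of length $r$ has parity $r\bmod 2$. In the vertex formula \eqref{laaa} the sum $\sum_\mu (-1)^{\overline{e^\mu}(\overline a+1)} e^\mu a e_\mu$ with $a$ even carries the sign $(-1)^{\overline{e^\mu}}$, and in $Q(N)$ the contributions from $e^\mu=E_j^i$ and $e^\mu=\Pi E_j^i$ cancel in pairs; equivalently, the vertex tensor factors as $2^{2\gamma_v+b_v-1}\prod_{\text{cycles}} otr(a_{\rho_1}\cdots a_{\rho_r})$, and the odd trace $otr$ annihilates even products. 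Either way each cycle is forced to have odd length. Once you correct the propagator and use this parity argument, the remainder of your outline (index flow around boundary components giving the decoration sum, and the power-of-$2$ bookkeeping assembling into $2^{-\chi(\widehat G)}$) goes through as in the paper.
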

\begin{svmultproof}
The non-zero components of the propagator 
\[
g_{\widetilde{I}}^{-1}=\sum_{i,j}\frac{2}{\lambda_{i}+\lambda_{j}}\Pi E_{j}^{i}\otimes\Pi E_{i}^{j}
\]
 are from the tensor square of the odd part $\Pi gl(N)\subset Q(N)$
of the algebra $Q(N)$. The products of $r$ odd elements from $Q(N)$
is odd/even for odd/even $r$. Since the parity of elements $e^{\mu}$and
$e_{\mu}$is opposite, the tensors $\alpha_{\sigma_{v},\gamma_{v}}$
for $\sigma_{v}=(\rho_{1}\ldots\rho_{r})\ldots(\tau_{1}\ldots\tau_{t})$,
restricted to the tensor powers of the odd part $\Pi gl(N)\subset Q(N)$,
are nonzero only if the cardinality of each cyclically ordered subset
is odd. Since otherwise, the product of the elements from the subset
is even and the two terms involving $e^{\mu}=\Pi E_{j}^{i}$ and $e^{\mu}=E_{j}^{i}$
cancel each other. 
\[
\alpha_{\sigma_{v},\gamma_{v}}=2^{2\gamma_{v}+b_{v}-1}otr(a_{\rho_{1}}\ldots a_{\rho_{2r'+1}})\ldots otr(a_{\tau_{1}}\ldots,a_{\tau_{2t'+1}}),\,a_{i}\in\Pi gl(N)
\]
 where $otr$ denotes the odd trace $Q(N)$.The simple vertices with
only one cyclically ordered subset must be of odd valency in order
to contribute nonzero tensor $\alpha_{\sigma_{v},\gamma_{v}}$, in
particular. It is then follows from the standard Feynman diagrammatics
with matrix indices that the contraction \eqref{eq:ZGhat} is the
sum over decorations of boundary components of $\widehat{G}$ by $\left\{ 1,\ldots,N\right\} $
of terms \eqref{eq:weightlambdaij}.
\end{svmultproof}

\begin{proposition}
For any $(g,n)$ the cochain given by the sum over isomorphism classes
of oriented stable ribbon graphs with punctures decorated by $\left\{ 1,\ldots,N\right\} $
and such that every vertex has the cyclically ordered subsets of flags
of \emph{odd} cardinality only, taken with the weight $\widehat{Z}_{\widetilde{I}}^{\widehat{G},or(\widehat{G})}(\lambda_{i})$,
is a cocycle of the stable ribbon graph complex
\[
d\left(\sum_{\left[\widehat{G}\right]\in\Gamma_{g,n}^{dec,odd}}(\widehat{G},or(\widehat{G}))\frac{2^{-\chi(\widehat{G})}}{\left|\textrm{Aut}(\widehat{G})\right|}\prod_{e\in\textrm{Edge}(\hat{G})}\frac{1}{\lambda_{i(e)}+\lambda_{j(e)}}\right)=0.
\]
\end{proposition}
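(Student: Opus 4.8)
The plan is to show that this cochain is exactly $\widehat{Z}_{\widetilde I}$ attached to the data $(Q(N),otr,\widetilde I(\lambda_i))$, and then invoke the already-established vanishing of its boundary. Concretely, by Proposition~\ref{weigth-ZIli} the weight $\widehat{Z}_{\widetilde I}^{\widehat G,or(\widehat G)}(\lambda_i)$ vanishes identically on any stable ribbon graph that has a vertex carrying a cyclically ordered subset of flags of even cardinality, and on the remaining graphs — those in $\Gamma_{g,n}^{dec,odd}$ — it equals the explicit product \eqref{eq:weightlambdaij}. Hence the cochain displayed in the statement is term-by-term equal to $\sum_{[\widehat G]}\widehat{Z}_{\widetilde I(\lambda_i)}^{\widehat G,or(\widehat G)}(\widehat G,or(\widehat G))=\widehat{Z}_{\widetilde I(\lambda_i)}$, summed with its chosen orientation.

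The next ingredient is that the triple $(Q(N),otr,I=[\Lambda,\cdot\,])$ is a legitimate instance of the general setup. I would check: the odd trace $otr(X)=\tfrac12 str(pX)$ gives an odd invariant scalar product $g(a,b)=otr(ab)$ with the stated symmetry; the operator $I=[\Lambda,\cdot\,]$ is an odd derivation and is anti-self-adjoint with respect to $g$ because $\Lambda$ is odd and $g$ is a trace form; the chosen $\widetilde I$, namely $\widetilde I(E^i_j)=\tfrac{2}{\lambda_i+\lambda_j}\Pi E^i_j$ and $\widetilde I(\Pi E^i_j)=0$, satisfies $[I,\widetilde I]=1$ (a one-line check on the basis $E^i_j,\Pi E^i_j$, using $[\Lambda,E^i_j]=\tfrac12(\lambda_i+\lambda_j)\Pi E^i_j$ and $[\Lambda,\Pi E^i_j]=\tfrac12(\lambda_i+\lambda_j)E^i_j$), is odd and self-adjoint, and squares to zero; and finally the anomaly-cancellation condition \eqref{trMa}, $\operatorname{Tr}(K_a)=0$ for all $a\in Q(N)$, holds — this is the statement that left multiplication on $Q(N)$ is supertraceless, which follows from $str$ of a left multiplication operator on $gl(N\mid N)$ being $N\cdot str(a)$ paired against the idempotent structure, and one checks it vanishes because the even and odd blocks contribute with opposite sign. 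With these verifications in place, Theorem~\ref{thm:boundaryZhatzero} applies verbatim.

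Assembling: $d\big(\sum_{[\widehat G]\in\Gamma_{g,n}^{dec,odd}}(\widehat G,or(\widehat G))\,\tfrac{2^{-\chi(\widehat G)}}{|\mathrm{Aut}(\widehat G)|}\prod_e\tfrac{1}{\lambda_{i(e)}+\lambda_{j(e)}}\big)=d\widehat{Z}_{\widetilde I(\lambda_i)}=0$, where the middle equality is Proposition~\ref{weigth-ZIli} (the graphs outside $\Gamma_{g,n}^{dec,odd}$ receive zero weight, so they may be freely added to or removed from the sum without changing it) and the last is Theorem~\ref{thm:boundaryZhatzero}. One subtlety I would make explicit: the differential can take a graph in $\Gamma_{g,n}^{dec,odd}$ to a graph with an even-cardinality cyclic subset, but by Proposition~\ref{weigth-ZIli} such a graph carries weight zero, so its appearance in $d$ of the restricted cochain is consistent with the vanishing — equivalently, the restricted cochain and $\widehat{Z}_{\widetilde I(\lambda_i)}$ genuinely agree as cochains, not merely up to weight-zero terms, once one notes the weight is zero precisely on the complement.

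The main obstacle is the bookkeeping in Proposition~\ref{weigth-ZIli} itself — the cancellation argument showing that a vertex with an even cyclic block kills the tensor $\alpha_{\sigma_v,\gamma_v}$, and the Feynman-diagrammatic identification of the surviving contractions with $\tfrac{2^{-\chi}}{|\mathrm{Aut}|}\prod_e\tfrac{1}{\lambda_{i(e)}+\lambda_{j(e)}}$ — but that proposition is already proved in the excerpt, so for the present statement the only real work is the routine verification that $(Q(N),otr,I,\widetilde I)$ meets all hypotheses (in particular \eqref{trMa}), after which the result is immediate from Theorem~\ref{thm:boundaryZhatzero}.
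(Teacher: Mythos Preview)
Your proposal is correct and matches the paper's approach exactly: the paper's proof is simply $\square$, treating the statement as an immediate consequence of Proposition~\ref{weigth-ZIli} (which identifies the displayed cochain with $\widehat{Z}_{\widetilde I(\lambda_i)}$) and Theorem~\ref{thm:boundaryZhatzero} (which gives $d\widehat{Z}_{\widetilde I}=0$). Your added verifications that $(Q(N),otr,I,\widetilde I)$ satisfies the standing hypotheses, including \eqref{trMa}, and your remark that the restricted and unrestricted cochains literally coincide (so no subtlety arises when applying $d$), are useful elaborations but do not depart from the paper's intended argument.
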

$\square$

There are natural differential 2- forms $\omega_{i}$ on the orbi-cell complex of
stable ribbon graphs, representing the cohomology classes $\psi_{i}=c_{1}(T_{p_{i}}^{*}\mathcal{\bar{M}}_{g,n})$,
see \cite{K}.
\begin{proposition}
\label{prop:The-Laplace-transf}The Laplace transform of the integral $\int_{C_{\hat{G}}}(\sum_{i=1}^{n}p_{i}^{2}\omega_{i})^{d}\prod dp_{i}$  over
the orbicell $C_{\hat{G}}$ corresponding to the oriented stable ribbon
graph $\widehat{G}$ coincides with 
\[
\frac{2^{-\chi(\widehat{G})}}{\left|\textrm{Aut}(\widehat{G}^{\textrm{dec}})\right|}\prod_{e\in\textrm{Edge}(\hat{G})}\frac{1}{\lambda_{i(e)}+\lambda_{j(e)}}
\]
\end{proposition}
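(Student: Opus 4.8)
The plan is to reduce the statement to a classical Laplace-transform computation on a single orbicell, following the method of Kontsevich. First I would recall that the orbicell $C_{\widehat{G}}$ associated with a stable ribbon graph $\widehat{G}$ is parametrized by the edge-lengths $\ell_e > 0$, subject to the constraint that the perimeters $p_i = \sum_{e \in \partial_i} \ell_e$ of the boundary components are fixed; here $\partial_i$ denotes the set of (sides of) edges lying on the $i$-th boundary component, so that $C_{\widehat{G}} \cong \mathbb{R}_{>0}^{\mathrm{Edge}(\widehat{G})}$ fibred over the simplex of perimeters, the quotient being taken by $\mathrm{Aut}(\widehat{G}^{\mathrm{dec}})$. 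The 2-forms $\omega_i$ on $C_{\widehat{G}}$ have the Kontsevich presentation $\omega_i = \sum_{e < e'} d(\ell_e/p_i) \wedge d(\ell_{e'}/p_i)$ summed over consecutive sides on the $i$-th boundary; the key property is that $\omega_i$ is a piecewise-linear symplectic form whose top power on the $(|\partial_i|-1)$-dimensional fibre has a simple volume expression. Thus $(\sum_i p_i^2 \omega_i)^d$ expands by the multinomial theorem into a sum of products $\prod_i p_i^{2d_i}\,\omega_i^{d_i}$ with $\sum d_i = d$.

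Next I would carry out the fibre integration. Integrating $\prod_i \omega_i^{d_i}$ over the fixed-perimeter fibre inside $C_{\widehat{G}}$, against the remaining edge-length measure $\prod d\ell_e$ modulo the perimeter constraints, produces — by Kontsevich's lemma on the volume of the symplectic form on the simplex of positive $\ell_e$ with $\sum \ell_e = p_i$ — a monomial in the $p_i$ of the shape $\prod_i p_i^{\text{(number of sides on boundary }i) - 1 - 2 d_i}$ times a purely combinatorial rational constant. Combining with the explicit $\prod_i p_i^{2d_i}$ prefactor, the powers of $p_i$ telescope, and the full integrand $\int_{C_{\widehat G}} (\sum_i p_i^2\omega_i)^d \prod dp_i$, now a function of the $p_i$ alone, becomes a single monomial $\prod_i p_i^{n_i - 1}$ where $n_i = |\partial_i|$ is the number of edge-sides on the $i$-th boundary, multiplied by the rational number $2^{-\chi(\widehat G)}/|\mathrm{Aut}(\widehat G^{\mathrm{dec}})|$. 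The factor $2^{-\chi(\widehat G)}$ will emerge from the normalizations of the $\omega_i$ together with the Euler-characteristic bookkeeping $\chi(\widehat{G}) = |\mathrm{Vert}| - |\mathrm{Edge}|$; I would match the power of $2$ by a direct count as in Kontsevich, section 3.

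Finally I would apply the Laplace transform $p_i \mapsto \lambda_i$, i.e. integrate against $\prod_i e^{-\lambda_i p_i}\,dp_i$ over $p_i \in \mathbb{R}_{>0}$. Since each boundary perimeter is a sum of the lengths of its incident edge-sides and every edge $e = (f(e),f'(e))$ contributes its length once to the boundary containing $f(e)$ and once to the boundary containing $f'(e)$, the exponent $\sum_i \lambda_i p_i$ rewrites as $\sum_{e} (\lambda_{i(e)} + \lambda_{j(e)})\ell_e$; carrying out the Gaussian-type elementary integral $\int_0^\infty e^{-(\lambda_{i(e)}+\lambda_{j(e)})\ell_e}\,d\ell_e$ over each edge yields exactly the product $\prod_{e \in \mathrm{Edge}(\widehat G)} (\lambda_{i(e)}+\lambda_{j(e)})^{-1}$. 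The main obstacle, and the step I would treat most carefully, is the fibre-integration of the wedge of PL-symplectic forms $\omega_i$ over the orbicell with the correct combinatorial constant and the precise power of $2$: this requires reproducing Kontsevich's computation of the symplectic volume in the presence of the stable-graph strata and verifying that the contributions from vertices with $\gamma_v > 0$ or $b_v > 1$ are accounted for by the $2^{-\chi}$ normalization — the cross-terms $d_i$ distributed among distinct boundaries do not interact, which is what makes the change of variables from $\{p_i\}$ to $\{\ell_e\}$ decouple the final integral into one-dimensional pieces.
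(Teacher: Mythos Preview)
The paper gives no proof of this proposition --- it is marked with a $\square$ and deferred implicitly to Kontsevich's computation in \cite{K}. Your outline follows Kontsevich's route, and the final step is exactly right: one rewrites $\sum_i \lambda_i p_i = \sum_{e}(\lambda_{i(e)}+\lambda_{j(e)})\ell_e$ and integrates each $\ell_e$ separately over $(0,\infty)$. That is the whole point.

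There is, however, a genuine gap in your middle step. You claim that the fibre integral of $(\sum_i p_i^2\omega_i)^d$ over the fixed-perimeter slice is a monomial $\prod_i p_i^{n_i-1}$ up to a constant, by appealing to ``Kontsevich's lemma on the volume of the symplectic form on the simplex of positive $\ell_e$ with $\sum \ell_e = p_i$''. This is not correct: the fibre $\{p_i=\text{const}\}\cap\mathbb{R}_{>0}^{\mathrm{Edge}}$ is \emph{not} a product of simplices, one per boundary, because each edge length $\ell_e$ enters \emph{two} perimeter constraints $p_{i(e)}$ and $p_{j(e)}$. The perimeter map $\mathbb{R}_{>0}^{|\mathrm{Edge}|}\to\mathbb{R}_{>0}^n$ has fibres that are more complicated polytopes, and the push-forward of Lebesgue measure is a piecewise polynomial (a box spline), not a single monomial. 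Consequently your two-stage plan --- first reduce to a function of the $p_i$, then Laplace-transform in the $p_i$ --- is internally inconsistent with your last paragraph, where you suddenly switch back to integrating over the $\ell_e$.

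The repair is to drop the fibre-integration detour entirely. Kontsevich's actual argument is that $\Omega=\sum_i p_i^2\omega_i$ is a \emph{constant-coefficient} $2$-form in the $d\ell_e$, so that $\Omega^d\wedge dp_1\wedge\cdots\wedge dp_n$ equals a combinatorial constant times $\prod_e d\ell_e$ on the orbicell. One then computes the Laplace transform \emph{directly} as an integral over $\mathbb{R}_{>0}^{|\mathrm{Edge}|}/\mathrm{Aut}$, using your (correct) rewriting of the exponent, and the product $\prod_e(\lambda_{i(e)}+\lambda_{j(e)})^{-1}$ drops out immediately. The constant in front is where the $2^{-\chi}$ and $|\mathrm{Aut}(\widehat G^{\mathrm{dec}})|^{-1}$ appear; this is the Pfaffian/volume computation in \cite{K}, and is the only nontrivial bookkeeping.
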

$\square$
\begin{proposition}
\label{prop:The-value-zero}The value of the cochain representing
powers of combinatorial $\psi-$classes on the graphs with at least
one vertex having \emph{a} cyclically ordered subset of even cardinality
is zero.
\end{proposition}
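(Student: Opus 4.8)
The plan is to prove Proposition~\ref{prop:The-value-zero} by a direct dimension count comparing the top degree of the combinatorial form $(\sum_i p_i^2\omega_i)^d$ against the dimension of the orbicell $C_{\widehat{G}}$, showing that on a stable ribbon graph with a vertex carrying an even-cardinality cyclic block the form restricted to that cell vanishes identically. First I would recall, following \cite{K} and the discussion in section~\ref{sec:Comp-and-stable-ribbon}, that $C_{\widehat{G}}$ is parametrized by assigning positive lengths to the edges of $\widehat{G}$, so $\dim C_{\widehat{G}}=|\mathrm{Edge}(\widehat{G})|$, while the combinatorial $2$-form $\omega_i$ is built from the length coordinates of the edges on the $i$-th boundary component. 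The key structural input is that a stable ribbon graph all of whose vertices have only odd-cardinality cyclic blocks, of type $(g,n)$ with $2d+n$ edges, has exactly the number of edges needed for $\int_{C_{\widehat{G}}}(\sum p_i^2\omega_i)^d\prod dp_i$ to be a top-dimensional integral; introducing one even block changes the parity/Euler-characteristic bookkeeping in a way that forces the integrand to be of degree strictly below the relevant top degree when pulled back to the cell, hence zero.

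The key steps, in order, would be: (i) express $\omega_i$ explicitly in terms of edge-length coordinates along the $i$-th boundary cycle, as in \cite{K}, noting that $\omega_i$ is a closed $2$-form whose rank on $C_{\widehat{G}}$ is bounded by (number of edges on boundary $i$) $-1$; (ii) observe that the presence of a vertex with an even cyclic block forces, via the stability condition $2(2\gamma_v+|Cycle(v)|-2)+|Flag(v)|>0$ and the Euler-characteristic relation, that the total number of edges available is one fewer than what is needed to support $(\sum_i p_i^2\omega_i)^d\wedge\prod dp_i$ as a volume form on $C_{\widehat{G}}$; (iii) conclude that this top-degree form vanishes identically on $C_{\widehat{G}}$, so the cochain value, which is $\int_{C_{\widehat{G}}}$ of exactly that form, is zero. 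An alternative, cleaner route is to argue on the algebraic side: by Proposition~\ref{prop:The-Laplace-transf} the Laplace transform of this integral over $C_{\widehat{G}}$ equals the weight $\widehat{Z}_{\widetilde{I}}^{\widehat{G},or(\widehat{G})}(\lambda_i)$ for the data $(Q(N),otr,\widetilde{I}(\lambda_i))$, and by Proposition~\ref{weigth-ZIli} this weight is zero precisely when some vertex has a cyclically ordered subset of even cardinality; since the Laplace transform is injective on the relevant space of functions, the integral itself vanishes for every such graph. I would present this second argument as the main line and the direct dimension count as a geometric cross-check.

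The main obstacle I anticipate is making the reduction "even block $\Rightarrow$ one fewer edge than needed" fully rigorous at the level of the orbicell, because the edge count $2d+n$ that appears in the statement of formula~\eqref{eq:psiprod} is tied to the odd-only graphs, and one must carefully track how admitting an even cyclic block at a vertex shifts $\chi(\widehat{G})$, $|\mathrm{Edge}(\widehat{G})|$, and the maximal rank of $\sum_i p_i^2\omega_i$ simultaneously. I would handle this by pinning down the identity $\sum_v(|Flag(v)|-2b_v)+2\sum_v(2\gamma_v)=\dots$ relating the data at vertices to $(g,n)$ and to $|\mathrm{Edge}(\widehat{G})|$, using it to show that the parity of $|Flag(v)|$ at each vertex controls whether the edge budget is exactly $2d+n$ or strictly less; once an even block appears, the cell is too small to carry the degree-$2d$ form together with $\prod dp_i$, and the integral is forced to be zero. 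If the Laplace-transform injectivity route is accepted, this obstacle is bypassed entirely and the proof reduces to citing Propositions~\ref{weigth-ZIli} and \ref{prop:The-Laplace-transf}.
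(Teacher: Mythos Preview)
Both routes you propose have genuine gaps, and neither is the paper's argument.

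\textbf{On the dimension count.} Step~(ii) is not correct. The cochain is evaluated on $\widehat{G}$ by integrating $\Omega^d\wedge\prod_i dp_i$ (with $\Omega=\sum_i p_i^2\omega_i$) over the cell $C_{\widehat{G}}$ of dimension $|\mathrm{Edge}(\widehat{G})|$, for the $d$ with $2d+n=|\mathrm{Edge}(\widehat{G})|$ if it exists. The presence of an even cyclic block does \emph{not} force the cell to be ``one edge too small'': for instance an ordinary ribbon graph with exactly two even-valent vertices and the rest trivalent still has $|\mathrm{Edge}|\equiv n\pmod 2$, so a top-degree $\Omega^d\wedge\prod dp_i$ exists on it. A parity argument handles the case of an \emph{odd} number of even blocks, but not the general statement. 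The real point is not that the cell is too small, but that a form of the correct top degree can still be identically zero; your outline gives no mechanism for that.

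\textbf{On the Laplace-transform route.} This is circular in the paper's logical structure. Proposition~\ref{prop:The-Laplace-transf} identifies the Laplace transform with the explicit nonzero product $2^{-\chi(\widehat{G})}|\mathrm{Aut}|^{-1}\prod_e(\lambda_{i(e)}+\lambda_{j(e)})^{-1}$, not with the $Q(N)$ weight $\widehat{Z}_{\widetilde{I}}^{\widehat{G}}$; by Proposition~\ref{weigth-ZIli} these agree only on odd-only graphs. If Proposition~\ref{prop:The-Laplace-transf} applied verbatim to an even-block graph it would give a nonzero answer and \emph{contradict} the statement you are trying to prove. In the proof of Theorem~\ref{psi-identity} the two propositions are used as complementary inputs (one for odd-only graphs, one for even-block graphs), so you cannot deduce Proposition~\ref{prop:The-value-zero} from Proposition~\ref{prop:The-Laplace-transf}.

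\textbf{What the paper actually does.} Given a vertex with an even cyclic block whose incident edges, taken in the cyclic order, are $l_1,\dots,l_{2r}$, the paper writes down the alternating vector field
\[
u=\frac{\partial}{\partial l_1}-\frac{\partial}{\partial l_2}+\cdots-\frac{\partial}{\partial l_{2r}}
\]
on $C_{\widehat{G}}$ and observes two things: $dp_i(u)=0$ for every $i$ (each boundary component meeting this block picks up consecutive edges of the block with opposite signs), and $\iota_u\Omega=0$. Hence $\Omega^d\wedge\prod_i dp_i$ is a top-degree form annihilated by a nonzero tangent vector, so it vanishes identically on $C_{\widehat{G}}$ and the integral is zero. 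This is precisely the ``rank'' phenomenon you alluded to in step~(i), but made effective by exhibiting the null direction directly rather than by an edge count.
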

\begin{proof}
Let $l_{1},l_{2}\ldots,l_{2r}$ denote the edges corresponding to
the cyclically ordered subset of some vertex, taken in the cyclic
order. I claim that the 2-form $\varOmega=\sum_{i=1}^{n}p_{i}^{2}\omega_{i}$
vanishes on the vector field $u=\frac{\partial}{\partial l_{1}}-\frac{\partial}{\partial l_{2}}+\ldots-\frac{\partial}{\partial l_{2r}}$.
This vector-field is tangent to the fibers $p_{i}=const$, since $dp_{i}(u)=0$.
It follows that the integral of $\Omega^{d}\cdot\prod dp_{i}$, $2d+n=\left|Edge(G)\right|$
over the cell corresponding to a graph with at least one vertex having
\emph{a} cyclically ordered subset of even cardinality is zero. 
\end{proof}

\begin{theorem}
\label{psi-identity}The following identity holds in the total cohomology
$H^{*}(\bar{\mathcal{M}}_{g,n})$.
\begin{multline*}
\sum_{\sum d_{i}=d}\psi_{1}^{d_{1}}\ldots\psi_{n}^{d_{n}}\prod_{i=1}^{n}\frac{(2d_{i}-1)!!}{\lambda_{i}^{2d_{i}+1}}=\\
=\left[\sum_{\left[\widehat{G}\right]\in\Gamma_{g,n}^{dec,odd}}(\widehat{G},or(\widehat{G}))\frac{2^{-\chi(\widehat{G})}}{\left|\textrm{Aut}(\widehat{G})\right|}\prod_{e\in\textrm{Edge}(\hat{G})}\frac{1}{\lambda_{i(e)}+\lambda_{j(e)}}\right]
\end{multline*}
where the sum on the right is over \emph{stable ribbon} graphs of
genus $g$ with $n$ numbered punctures, with $2d+n$ edges, and such
that its vertices have cyclically ordered subsets of arbitrary \emph{odd}
cardinality.
\end{theorem}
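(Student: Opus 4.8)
The plan is to produce a single cochain in the stable ribbon graph complex which represents both sides of the identity, and then to pass to $H^{*}(\bar{\mathcal{M}}_{g,n})$ through the comparison of that complex with the cohomology of the compactified moduli space. Throughout I would use Theorem~2 of \cite{B1}, which identifies the stable ribbon graph complex $(\widehat{C}^{\ast},d)$ with a complex computing $H^{*}(\bar{\mathcal{M}}_{g,n})$ via the orbi-cell decomposition of the combinatorial (metric stable ribbon graph) model, so that a graph $\widehat{G}$ of type $(g,n)$ with $2d+n$ edges indexes an orbi-cell $C_{\widehat{G}}$ whose associated cochain lies in cohomological degree $2d$, and under which the combinatorial $2$-forms $\omega_{i}$ of \cite{K}, extended over the orbi-cell complex, represent $\psi_{i}=c_{1}(T_{p_{i}}^{\ast})$.

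\emph{Step 1.} First I would observe that the cocycle $\widehat{Z}_{\widetilde{I}}(\lambda_{i})$ of the theorem statement --- a cocycle by Theorem~\ref{thm:boundaryZhatzero} applied to the data $(Q(N),otr,\widetilde{I}(\lambda_{i}))$ --- coincides, \emph{as a cochain in} $(\widehat{C}^{\ast},d)$, with the cochain that sends a stable ribbon graph $\widehat{G}$ with $2d+n$ edges to the Laplace transform (in the perimeter variables $p_{i}$) of the integral $\int_{C_{\widehat{G}}}\bigl(\sum_{i}p_{i}^{2}\omega_{i}\bigr)^{d}\prod_{i}dp_{i}$, and sends every other graph to $0$. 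Indeed, by Proposition~\ref{weigth-ZIli} the weight $\widehat{Z}_{\widetilde{I}}^{\widehat{G},or(\widehat{G})}(\lambda_{i})$ equals $2^{-\chi(\widehat{G})}|\textrm{Aut}(\widehat{G}^{\textrm{dec}})|^{-1}\prod_{e}(\lambda_{i(e)}+\lambda_{j(e)})^{-1}$ on stable ribbon graphs all of whose cyclically ordered subsets of flags have odd cardinality and vanishes otherwise; by Proposition~\ref{prop:The-Laplace-transf} the Laplace-transformed integral is given by the same rational function; and by Proposition~\ref{prop:The-value-zero} it vanishes as soon as some vertex carries a cyclically ordered subset of even cardinality. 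This is also the step recording the passage from the $\{1,\dots,N\}$-decorations of the $Q(N)$-computation to the $n$ numbered punctures, through the orbit--stabiliser relation between $\textrm{Aut}(\widehat{G}^{\textrm{dec}})$ and $\textrm{Aut}(\widehat{G})$.

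\emph{Step 2.} Next I would evaluate the cohomology class of this $\psi$-class cochain by Kontsevich's argument. Expanding by the multinomial formula, $\bigl(\sum_{i}p_{i}^{2}\omega_{i}\bigr)^{d}=\sum_{\sum d_{i}=d}\frac{d!}{d_{1}!\cdots d_{n}!}\,\prod_{i}p_{i}^{2d_{i}}\,\prod_{i}\omega_{i}^{d_{i}}$, I would integrate the $p_{i}$ fibrewise over the orthant of perimeters --- the cell $C_{\widehat{G}}$ fibres over $\mathbb{R}_{>0}^{n}$ with $2d$-dimensional fibre a cell of the fixed-perimeter combinatorial model of $\bar{\mathcal{M}}_{g,n}$ --- leaving the elementary integrals $\int_{0}^{\infty}p_{i}^{2d_{i}}e^{-\lambda_{i}p_{i}}\,dp_{i}=(2d_{i})!\,\lambda_{i}^{-2d_{i}-1}$ and the fibrewise integrals of $\prod_{i}\omega_{i}^{d_{i}}$. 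Since, for each fixed perimeter vector, the cocycle assembled from these fibrewise integrals represents $\prod_{i}\psi_{i}^{d_{i}}\in H^{2d}(\bar{\mathcal{M}}_{g,n})$ (by \cite{K} and its stable refinement in \cite{B1}), and since $(2d_{i})!=2^{d_{i}}d_{i}!\,(2d_{i}-1)!!$, the resulting cochain represents $\sum_{\sum d_{i}=d}\psi_{1}^{d_{1}}\cdots\psi_{n}^{d_{n}}\prod_{i}\frac{(2d_{i}-1)!!}{\lambda_{i}^{2d_{i}+1}}$, for the normalisation of $\omega_{i}$ chosen as in \cite{K}. Combining Steps~1 and~2 with the comparison isomorphism of \cite{B1} gives the asserted identity in $H^{*}(\bar{\mathcal{M}}_{g,n})$.

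\emph{Main obstacle.} The delicate part is Step~2, and it is essentially Kontsevich's theory carried over to the compactified model: one must describe, on the boundary strata (stable ribbon graphs with $\gamma_{v}>0$, where a closed surface of positive genus is glued in at a vertex), the product structure of the orbi-cells over the perimeter orthant and the extension of the forms $\omega_{i}$ across these strata; one must justify the Fubini splitting of the top-degree integral over $C_{\widehat{G}}$ into a convergent perimeter integral times a fibre integral, together with the fact that the weighted superposition over perimeters of the (mutually cohomologous) fibre cocycles is again cohomologous to $\prod_{i}\psi_{i}^{d_{i}}$; and one must track the universal normalisation constant. By contrast Step~1 is purely formal, being an immediate consequence of Propositions~\ref{weigth-ZIli}, \ref{prop:The-Laplace-transf} and~\ref{prop:The-value-zero} proved above.
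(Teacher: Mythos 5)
Your proposal is correct and follows essentially the same route as the paper: Step 1 is exactly the paper's combination of Propositions \ref{weigth-ZIli}, \ref{prop:The-Laplace-transf} and \ref{prop:The-value-zero} identifying the weight with the Laplace transform of $\int_{C_{\hat G}}(\sum_i p_i^2\omega_i)^d\prod dp_i$, and Step 2 is the same fibration-over-perimeters argument (the paper pairs with a cycle $\varphi$ and uses constancy of the fibre integral of $\prod_i\omega_i^{d_i}$, which is your Fubini/multinomial computation made explicit via $\int_0^\infty p^{2d}e^{-\lambda p}dp=(2d)!\lambda^{-2d-1}$). The normalisation and boundary-strata issues you flag as the main obstacle are present, and left at the same level of detail, in the paper's own proof.
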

\begin{svmultproof}
The weight $\widehat{Z}_{\widetilde{I}}^{\widehat{G},or(\widehat{G})}(\lambda_{i})$
coincides with the Laplace transform of the integral $\int_{C_{\hat{G}}}(\sum_{i=1}^{n}p_{i}^{2}\omega_{i})^{d}\prod dp_{i}$
by the propositions \ref{weigth-ZIli}, \ref{prop:The-Laplace-transf}
and \ref{prop:The-value-zero}. Let $\varphi=\sum_{\widehat{G}}\varphi_{\widehat{G}}\left[\widehat{G}\right]$be
a cycle in the stable ribbon graph complex. For any $p_{i}>0$ the
integral of $\prod_{i=1}^{n}\varpi_{i}^{d_{i}}$ over $\varphi\cap\pi^{-1}(p_{i})$
does not depend on $p_{i}$ and coincides with the value of the cohomology
class of $\prod_{i=1}^{n}\varpi_{i}^{d_{i}}$ on the homology class
represented by $\varphi$, therefore the Laplace transform has poles
at $\lambda_{i}=0$ only and coincides with the generating function.
\end{svmultproof}

Let 
\[
\hat{F}(t_{0},t_{1},\ldots,t_{k},\ldots)=\sum_{n;d_{1},\ldots,d_{n}\geq0}\psi_{1}^{d_{1}}\ldots\psi_{n}^{d_{n}}\frac{t_{d_{1}}\ldots t_{d_{n}}}{n!}
\]
 be the power series with values in $\oplus_{g,n}H^{*}(\bar{\mathcal{M}}_{g,n})$
which is the generating function for the various products of the$\psi$-classes. 
\begin{theorem}
The cohomology class of $\widehat{Z}_{\widetilde{I}}(\lambda_{i})$
coincides with the formal power series $\hat{F}(t_{0}(\varLambda),t_{1}(\varLambda),\ldots,t_{k}(\varLambda),\ldots)$,
$t_{k}=-(2k-1)!!\sum_{i=1}^{n}\lambda_{i}^{-(2k+1)}$ with values
in the total cohomology $\oplus_{g,n}H^{*}(\bar{\mathcal{M}}_{g,n})$. 
\end{theorem}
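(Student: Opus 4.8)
The plan is to read this statement as a purely formal reorganisation of Theorem \ref{psi-identity}, proved by unwinding the two generating functions and comparing coefficients, with no further geometric input. First I would expand the left‑hand side. By definition $\widehat{Z}_{\widetilde{I}}(\lambda_{i})$ is the sum, over all types $(g,n)$ and over all equivalence classes of oriented stable ribbon graphs, of the elements $\widehat{Z}_{\widetilde{I}}^{\widehat{G},or(\widehat{G})}(\lambda_{i})\,(\widehat{G},or(\widehat{G}))$. By Proposition \ref{weigth-ZIli} these weights vanish on every graph carrying a cyclically ordered subset of even cardinality, so the surviving terms are exactly those indexed by $\bigsqcup_{g,n}\Gamma_{g,n}^{dec,odd}$, each contributing the weight \eqref{eq:weightlambdaij} summed over the decorations of the boundary components of $\widehat{G}$ by $\{1,\dots,N\}$. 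Grouping these terms by the type $(g,n)$ and by the number of edges $2d+n$, Theorem \ref{psi-identity} identifies, for fixed $g$, $n$ and $d$, the cohomology class of the corresponding finite sub‑sum with
\[
\sum_{\sum d_{i}=d}\psi_{1}^{d_{1}}\ldots\psi_{n}^{d_{n}}\prod_{i=1}^{n}\frac{(2d_{i}-1)!!}{\lambda_{i}^{2d_{i}+1}},
\]
where the label $i$ of a puncture records which eigenvalue $\lambda_{i}$ decorates it.

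Next I would pass from this decoration bookkeeping to the variables $t_{k}$. Summing over all decorations lets each puncture carry one of the eigenvalues $\lambda_{c}$ independently, so by Fubini the $i$‑th factor $\tfrac{(2d_{i}-1)!!}{\lambda_{i}^{2d_{i}+1}}$ is replaced by $(2d_{i}-1)!!\sum_{c}\lambda_{c}^{-(2d_{i}+1)}=-t_{d_{i}}(\varLambda)$, the minus sign of $t_{k}(\varLambda)$ being distributed over the $n$ puncture‑factors. The factor $(2d_{i}-1)!!$ is thus already accounted for, and the remaining normalisation $\tfrac{1}{n!}$ in the definition of $\hat F$ is produced by the passage from isomorphism classes of the \emph{decorated} graphs appearing in $\widehat{Z}_{\widetilde{I}}$ to the \emph{numbered} marked points underlying $H^{*}(\bar{\mathcal{M}}_{g,n})$, via the usual orbit–stabiliser count relating $|\textrm{Aut}(\widehat{G}^{\textrm{dec}})|$ to the relabellings of the boundary components. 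Comparing with
\[
\hat F(t_{0},t_{1},\ldots)=\sum_{n;\,d_{1},\ldots,d_{n}\ge0}\psi_{1}^{d_{1}}\ldots\psi_{n}^{d_{n}}\frac{t_{d_{1}}\ldots t_{d_{n}}}{n!},
\]
one reads off that the total contribution of genus $g$ to $\widehat{Z}_{\widetilde{I}}(\lambda_{i})$ equals the genus‑$g$ part of $\hat F(t_{0}(\varLambda),t_{1}(\varLambda),\ldots)$, and summing over $g$ finishes the argument in $\bigoplus_{g,n}H^{*}(\bar{\mathcal{M}}_{g,n})$.

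The substantive step, and the only place where care is really needed, is this last bookkeeping of automorphisms, decorations and signs: one must verify that the translation from ``isomorphism classes of decorated stable ribbon graphs'' (the natural index set for $\widehat{Z}_{\widetilde{I}}$) to ``numbered marked points with a sum over the colours $\{1,\dots,N\}$'' reproduces \emph{exactly} the factor $1/n!$ and the sign $(-1)^{n}$, with no residual contribution from the discrepancy between $|\textrm{Aut}(\widehat{G})|$ and $|\textrm{Aut}(\widehat{G}^{\textrm{dec}})|$. This is the same weighted count already used to pass from Proposition \ref{weigth-ZIli} to Theorem \ref{psi-identity}, so it should demand no new idea; everything else is the formal expansion of a generating series, and in particular no analytic input (convergence of the Laplace transform, absence of poles away from $\lambda_{i}=0$) beyond what Theorem \ref{psi-identity} already supplies is needed.
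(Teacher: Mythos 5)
Your proposal is correct and follows essentially the same route as the paper, whose entire proof of this theorem is the single line ``Follows from the theorem \ref{psi-identity}''; you are simply making explicit the coefficient comparison, the summation over decorations, and the $1/n!$ and sign bookkeeping that the paper leaves implicit. The delicate point you flag at the end (matching $|\textrm{Aut}(\widehat{G}^{\textrm{dec}})|$ against numbered punctures and locating the $(-1)^{n}$ hidden in the convention $t_{k}=-(2k-1)!!\sum\lambda_{i}^{-(2k+1)}$) is indeed the only content beyond Theorem \ref{psi-identity}, and the paper offers no further detail on it either.
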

\begin{svmultproof}
Follows from the theorem \ref{psi-identity}.
\end{svmultproof}

\end{document}